\newtheorem{thm}{Theorem}[section]
\newtheorem{cor}[thm]{Corollary}
\newtheorem{lem}[thm]{Lemma}
\newtheorem{prop}[thm]{Proposition}
\theoremstyle{definition}
\newtheorem{defn}[thm]{Definition}
\theoremstyle{remark}
\newtheorem{rem}[thm]{Remark}
\numberwithin{equation}{section}
\newcommand{\R}{\mathbb R}
\newcommand{\eps}{\epsilon}
\newcommand{\p}{\partial}
\newcommand{\di}{\displaystyle}
\newcommand{\comment}[1]{}
\begin{document}

\title[Regularity  of flat free boundaries]{A one-phase problem for the fractional Laplacian: regularity of flat free boundaries.}
\author{D. De Silva}
\address{Department of Mathematics, Barnard College, Columbia University, New York, NY 10027}
\email{\tt  desilva@math.columbia.edu}
\author{O. Savin}
\address{Department of Mathematics, Columbia University, New York, NY 10027}
\email{\tt  savin@math.columbia.edu}
\author{Y. Sire}
\address{LATP, UMR CNRS 7353, Universit\'e Aix-Marseille, France}
\email{\tt sire@cmi.univ-mrs.fr}
\begin{abstract} We consider a one-phase free boundary problem involving a fractional Laplacian $(-\Delta)^\alpha$, $0<\alpha <1,$  and we prove that ``flat free boundaries" are $C^{1,\gamma}$. We thus extend the known result for the case $\alpha=1/2.$\end{abstract}
\maketitle

\section{Introduction}

In the last decade, a large amount of work has been devoted to non linear equations involving non local operators with special attention for the so-called fractional laplacian $(-\Delta)^\alpha$, where $\alpha \in (0,1)$. This is a Fourier multiplier in $\mathbb R^n$ whose symbol is $|\xi|^{2\alpha}$. The main feature of this operator is its non locality, which can be seen from the alternative definition given by its integral representation (see \cite{L}) 
$$(-\Delta)^\alpha u (x) = PV \int_{\R^n} \frac{u(x) - u(y)}{|x-y|^{n+2\alpha}} dy$$
where $PV$ denotes the Cauchy principal value (up to a renormalizing constant depending on $n$ and $\alpha$.)

This paper investigates the regularity properties of a free boundary problem involving the fractional Laplacian. More precisely, we are interested in a Bernoulli-type one-phase problem. The classical one is given by 
\begin{equation}\label{AC}\begin{cases}
\Delta u = 0, \quad \textrm{in $\Omega \cap \{u > 0 \} ,$}\\
|\nabla u|= 1, \quad \textrm{on  $\Omega \cap  \p \{u>0\},$} 
\end{cases}\end{equation}
with $\Omega$ a domain in $\R^n.$
A pioneering investigation of \eqref{AC} was that of Alt and Caffarelli \cite{AC} (variational context), and then Caffarelli  \cite{C1,C2,C3} (viscosity solutions context).

As a natural generalization of \eqref{AC}, we consider the following problem (see for instance the book \cite{DL})

\begin{equation}\label{ACalpha}\begin{cases}
(-\Delta)^\alpha u = 0, \quad \textrm{in $\Omega \cap \{u > 0 \} ,$}\\
\di\lim_{t \rightarrow 0^+} \dfrac{u
(x_0+t\nu(x_0))}{t^\alpha} = const., \quad \textrm{on  $ \Omega \cap \p\{u>0\},$} 
\end{cases}\end{equation}
with $u$ defined on the whole $\R^n$ with prescribed values outside of $\Omega$.  This problem has been first investigated by Caffarelli, Roquejoffre and the third author in \cite{CafRS}. 

The non locality of the fractional Laplacian makes computations hard to handle directly on the equation. However by a result by Caffarelli and Silvestre \cite{CSi}, one can realize it as a boundary operator in one more dimension. More precisely, given $\alpha\in(0,1)$ and a function $u \in H^\alpha(\mathbb R^n)$ we consider the minimizer $g$ to
\begin{equation} \label{argmin} 
{\rm min}\left\{ \int_{\mathbb R^{n+1}_{+}}z^\beta \left| \nabla g \right|^2\;dx dz \; : \;  
g|_{\partial\mathbb R^{n+1}_{+}}=u\right\}
\end{equation} 
with $$\beta:=1-2\alpha \in(-1,1).$$

The ``extension" $g$ solves the Dirichlet problem
\begin{equation*}\label{bdyFrac2} 
\left \{
\begin{aligned} 
\textrm{div\,} (z^\beta \nabla g)&=0 \qquad 
{\mbox{ in $\mathbb R^{n+1}_+$}} 
\\
g&= u  
\qquad{\mbox{ on $\partial\mathbb R^{n+1}_+$,}}\end{aligned}\right . 
\end{equation*} 
and $(-\triangle)^\alpha u$ is a Dirichlet to Neumann type operator for $g$. Precisely in \cite{CSi} it is shown that
$$(-\Delta)^\alpha u= 
- d_\alpha \displaystyle{\lim_{z \rightarrow 0^+}} z^{\beta} \partial_z g,$$
where $d_\alpha$ is a positive constant depending only on $n$ and~$\alpha$,
and the equality holds in the distributional sense.

Due to the variational structure of the extension problem, one can consider the following functional, associated to \eqref{ACalpha}, 

 $$J(g,B_1) = \int_{B_1} |z|^\beta|\nabla g|^2 dx dz + \mathcal{L}_{\R^n} (\{g>0\} \cap \R^n \cap B_1).$$ 
The minimizers of $J$ have been investigated in \cite{CafRS}, where general properties (optimal regularity, nondegeneracy,
classification of global solutions), corresponding to those proved in \cite{AC} for the classical Bernoulli problem \eqref{AC}, have been obtained. In \cite{CafRS}, only a partial result concerning the regularity of the free boundary is obtained. The question of the regularity of the free boundary in the case $\alpha=1/2$ was subsequently settled in a series of papers co-authored by the first and the second author of this note \cite{DR, DS1, DS2}. 

In this paper, in view of the previous discussion, we consider the following thin one-phase problem associated to the extension

\begin{equation}\label{FBintro}\begin{cases}
\text{div}(|z|^\beta \nabla g) = 0, \quad \textrm{in $B_1^+(g):= B_1 \setminus \{(x,0) : g(x,0) = 0 \} ,$}\\
\dfrac{\p g}{\p t^\alpha}= 1, \quad \textrm{on  $F(g):=  \p_{\R^n}\{x \in \mathcal{B}_1 : g(x,0)>0\} \cap \mathcal{B}_1,$} 
\end{cases}\end{equation}
where $\beta=1-2\alpha,$  \begin{equation}\label{nabla_U}
\dfrac{\p g}{\p t^\alpha}(x_0): = \di\lim_{t \rightarrow 0^+} \frac{g(x_0+t\nu(x_0))} {t^\alpha},  \quad x_0 \in F(g) \end{equation} and  $\mathcal{B}_r \subset \R^n$ is the $n$-dimensional ball of radius $r$ (centered at 0).

A special class of viscosity solutions to \eqref{FBintro} (with the constant 1 replaced by a precise constant $A$ depending on $n$ and $\alpha$) is provided by minimizers of the functional $J$ above.

We explain below the free boundary condition \eqref{nabla_U}. In Section 2 we show that in the case $n=1$, a particular 2-dimensional solution $U(t,z)$ to our free boundary problem is given by 
\begin{equation}\label{U}U = \left (r^{1/2}\cos \frac \theta 2 \right )^{2\alpha}, \end{equation}
with $r$, $\theta$ the polar coordinates in the $(t,z)$ plane. This function is simply the ``extension" of $(t^+)^\alpha$ to the upper half-plane, reflected evenly across $z=0$. By boundary Harnack estimate (see Theorem \ref{bhi}), any solution $g$ to $$div (|z|^\beta \nabla g) = 0, \quad \mbox{in} \quad \R^2 \setminus \{(t,0)| t \le 0 \}$$ that vanishes on the negative $t$ axis satisfies the following expansion near the origin
$$g=U(a+o(1)),$$
for some constant $a$. Then $\frac{\p g} { \p t^\alpha}(0)=a$ and the constant $a$ can be thought as a ``normal" derivative of $g$ at the origin. 

The 2-dimensional solution $U$ describes also the general behavior of $g$ near the free boundary $F(g)$. 
Indeed,
in the $n$-dimensional case, if $0 \in F(g)$ and $F(g)$ is $C^2$ then the same expansion as above holds in the 2-dimensional plane perpendicular to $F(g)$ at the origin. We often denote the limit in \eqref{nabla_U} as $\p g / \p U$ and it represents the first coefficient of $U$ in the expansion of $g$ as above.

We now state our main result  about the regularity of $F(g)$ under appropriate flatness assumptions (for all the relevant definitions see Section 2).

\begin{thm} \label{mainT}There exists a small constant $\bar \eps >0$ depending on $n$ and $\alpha$, such that if $g$ is a viscosity solution to \eqref{FBintro}  satisfying
\begin{equation}\label{Flat2} \{x \in \mathcal{B}_1 : x_n \leq -\bar \eps\} \subset \{x \in \mathcal{B}_1 : g(x,0)=0\} \subset \{x \in \mathcal{B}_1 : x_n \leq \bar \eps \},\end{equation} then $F(g)$ is $C^{1,\gamma}$ in $\mathcal{B}_{1/2}$, with $\gamma>0$ depending on $n$ and $\alpha$.
\end{thm}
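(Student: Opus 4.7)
The plan is to adapt the improvement-of-flatness paradigm developed by Caffarelli and reformulated by De~Silva for the classical one-phase problem, which in the $\alpha=1/2$ thin case was carried out in \cite{DR,DS1,DS2}, to the weighted setting $\beta=1-2\alpha$. The central dichotomy is: if in $B_1$ the free boundary $F(g)$ is $\bar\epsilon$-close to a hyperplane, then in a suitable subball $B_\rho$ it is $\bar\epsilon/2$-close to a (possibly different) hyperplane; iterating on dyadic scales yields the $C^{1,\gamma}$ conclusion with a quantitative exponent $\gamma=\gamma(n,\alpha,\rho)>0$.

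The first main step is a partial Harnack inequality: if the $2$D profile $U$ sandwiches $g$ between two translates $U(x_n-a,z)\le g\le U(x_n-b,z)$ in $B_1$ with $b-a=:\epsilon$ small, then in a smaller ball $B_\rho$ one of the two bounds improves by a definite amount $c\epsilon$. This is proved by sliding sub- and supersolutions built as perturbations of $U$, engineered so that the barrier can be placed in contact with $g$ either at an interior point (using a weighted Harnack inequality for $\textrm{div}(|z|^\beta\nabla\cdot)$) or on the free boundary (using the boundary Harnack estimate Theorem~\ref{bhi} together with the nondegeneracy and optimal regularity from \cite{CafRS}).

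Iterating the partial Harnack gives equi-Hölder continuity of the normalized defects $\tilde g_k=(g-U(x_n-a_k,z))/\epsilon_k$ on compact subsets of $B_{1/2}$ away from the singular set $\{x_n\le 0,\,z=0\}$. Passing to a limit along a subsequence $\epsilon_k\to 0$, the function $\tilde g_\infty$ solves, in the viscosity sense, a linearized free boundary problem: a degenerate elliptic equation in divergence form with weight built from $|z|^\beta$ and $U$, complemented by a homogeneous Neumann-type condition on the thin contact set that encodes the linearization of the free boundary condition $\p g/\p t^\alpha=1$. A Schauder-type boundary estimate then shows that $\tilde g_\infty(x,z)=a+b\cdot x'+o(|(x,z)|)$ near the origin; rescaling back, this expansion furnishes the improvement of flatness, with the new direction $\nu'$ obtained by rotating $e_n$ toward $b$ by an angle $O(\epsilon)$.

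The final step is to iterate the improvement of flatness on dyadic balls and record the accumulation of the tilts $\nu_k$, which by standard bookkeeping yields a Hölder modulus of continuity for the normal along $F(g)$ and hence $F(g)\in C^{1,\gamma}$ in $\mathcal{B}_{1/2}$. The main obstacle will be the Schauder regularity of the linearized problem near the origin: unlike the $\alpha=1/2$ case where the weight is absent and $U$ is the real part of $\sqrt{\,\cdot\,}$, here the $|z|^\beta$-weight combined with the $r^\alpha$-behavior of $U$ requires a genuinely new weighted estimate at the intersection of the degenerate weight and the oblique Neumann condition on the slit $\{x_n\le 0,\,z=0\}$.
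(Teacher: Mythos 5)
Your proposal follows the paper's strategy essentially verbatim: improvement of flatness via a partial Harnack inequality, compactness to a linearized problem, regularity of the limit, and dyadic iteration. The one substantive issue is the normalization you choose. The vertical defect $(g-U(x_n-a_k,z))/\epsilon_k$ blows up like $r^{\alpha-1}$ near $F(g)$ (since $g-U\approx\eps U_n\cdot(\text{bounded})$ and $U_n\sim r^{\alpha-1}$), so it is not the quantity the Harnack inequality uniformly controls, and it does not yield a bounded limit near the slit. The paper instead works with the $\eps$-\emph{domain variation} $\tilde g_\eps$ defined implicitly by $U(X)=g(X-\eps\tilde g_\eps(X)e_n)$, which takes values in $[-1,1]$ and whose graph converges in Hausdorff distance (Lemma~\ref{ginfty}). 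This is what Theorem~\ref{mainH} actually bounds. Correspondingly, the linearized operator carries an extra factor of $U_n$: the limit $\tilde g_\infty$ solves $\text{div}(|z|^\beta\nabla(U_n w))=0$ in $B_1\setminus P$ together with $|\nabla_r w|=0$ on $L$ (Definition~\ref{linearsol}), neither of which one can read off from the vertical difference alone. Finally, the regularity of this linearized problem is handled variationally rather than by a new Schauder theory: rewrite as $\text{div}(|z|^\beta U_n^2\nabla w)=0$, exploit translation invariance in $x'$ to bound all tangential derivatives (Lemma~\ref{derivativeH}), and reduce to a two-dimensional problem in $(x_n,z)$. The ``new weighted estimate'' you correctly anticipate being needed at the slit is a boundary Harnack estimate for the non-homogeneous equation with right-hand side controlled by the explicit barrier $rU$ (Remark~\ref{fin}), which is a modest extension of the Fabes--Jerison--Kenig estimate already recorded in Theorem~\ref{bhi}.
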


The previous theorem has the following corollary.

\begin{cor} \label{mainC}There exists a universal constant $\bar \eps >0$, such that if $u$ is a viscosity solution to \eqref{ACalpha} in $\mathcal B_1$  satisfying
\begin{equation*} \{x \in \mathcal{B}_1 : x_n \leq -\bar \eps\} \subset \{x \in \mathcal{B}_1 : u(x,0)=0\} \subset \{x \in \mathcal{B}_1 : x_n \leq \bar \eps \},\end{equation*} then $F(u)$ is $C^{1,\gamma}$ in $\mathcal{B}_{1/2}$.
\end{cor}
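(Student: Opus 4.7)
The plan is to deduce Corollary \ref{mainC} from Theorem \ref{mainT} by passing to the Caffarelli--Silvestre extension. Given $u$ solving \eqref{ACalpha} in $\mathcal{B}_1$, let $g$ denote its $\beta$-harmonic extension to $\R^{n+1}_+$ and reflect it evenly across $\{z=0\}$. Then $g$ solves $\text{div}(|z|^\beta \nabla g) = 0$ away from $\{(x,0) : u(x,0)=0\}$, since $(-\Delta)^\alpha u = 0$ on $\{u>0\}$ translates (via the Dirichlet-to-Neumann identification) to the vanishing of the conormal derivative of $g$, which is exactly the transmission condition making the even reflection a weak solution. The positivity set and the free boundary of $g$ restricted to $\{z=0\}$ coincide with those of $u$, so the flatness hypothesis \eqref{Flat2} transfers verbatim from $u$ to $g$.

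Next I would check that $g$ is a viscosity solution to \eqref{FBintro}, up to normalizing the constant in the free boundary condition to $1$ by a multiplicative rescaling $g \mapsto c g$. At a free boundary point $x_0$, the asymptotics $\lim_{t\to 0^+} u(x_0+t\nu(x_0))/t^\alpha = \text{const}$ prescribed by \eqref{ACalpha} is precisely the quantity $\partial g / \partial t^\alpha (x_0)$ appearing in \eqref{FBintro}, because along the inward normal direction in $\{z=0\}$ the extension $g$ agrees with $u$. Thus after rescaling, $g$ satisfies the free boundary condition $\partial g / \partial t^\alpha = 1$ on $F(g)$ in the pointwise sense.

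The step that needs care is verifying the equivalence of the two notions of viscosity solution: one phrased for $u$ using $n$-dimensional test functions against the nonlocal operator $(-\Delta)^\alpha$, the other phrased for $g$ using $(n+1)$-dimensional test functions against the local degenerate operator $\text{div}(|z|^\beta \nabla \cdot)$. The natural device is to take any smooth strict sub/supersolution $\varphi$ touching $u$ from one side and lift it to its own $\beta$-harmonic extension $\Phi$; then $\Phi$ touches $g$ from the same side on $\{z=0\}$, and since the free boundary condition is expressed as a limit along the normal within $\{z=0\}$ (see the discussion following \eqref{U}), the condition $\partial \Phi / \partial t^\alpha \lessgtr 1$ transfers back to $\partial \varphi / \partial t^\alpha \lessgtr \text{const}$. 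I expect this equivalence to be the main technical point, but it follows the same bookkeeping already used in \cite{CafRS} and in the $\alpha = 1/2$ theory of \cite{DR,DS1,DS2}.

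Having placed $g$ within the scope of Theorem \ref{mainT}, the theorem yields that $F(g)$ is $C^{1,\gamma}$ in $\mathcal{B}_{1/2}$. Since $F(g) = F(u)$ as subsets of $\{z=0\}\cap \mathcal{B}_{1/2}$, the conclusion of the corollary follows, with $\bar\eps$ and $\gamma$ inherited from the theorem and hence universal in the sense stated.
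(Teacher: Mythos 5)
Your proposal is correct and follows the same route as the paper: pass to the Caffarelli--Silvestre extension, invoke the equivalence between viscosity solutions of \eqref{ACalpha} and of the extended problem \eqref{FB}, and then apply Theorem \ref{mainT}. The paper has already recorded precisely the equivalence you identify as the technical crux (the unnumbered corollary following Proposition \ref{exp}, deduced from the boundary expansion at regular free boundary points), so no additional test-function bookkeeping or rescaling is required and the corollary is immediate.
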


The Theorem above extends the results in \cite{DR} to any power $0<\alpha<1.$ We follow the strategy developed in \cite{DR}. Most of the proofs remain valid in this context as well, since they rely on basic facts such as Harnack Inequality, Boundary Harnack inequality, Comparison Principle and elementary properties of $U$.

The paper is organized as follows. In section 2 we introduce notation, definitions and preliminary results. In Section 3 we recall the notion of $\eps$- domain variations and the corresponding linearized problem. Section 4 is devoted to Harnack inequality while Section 5 contains the proof of the main improvement of flatness theorem. In Section 6 the regularity of the linearized problem is investigated.

\section{Preliminaries}

In this Section we introduce notation, definitions, and preliminary results.

\subsection{Notation}
 
A point $X \in \R^{n+1}$ will be denoted by $X= (x,z) \in \R^n \times \R$. We will also use the notation $x=(x',x_n)$ with $x'=(x_1,\ldots, x_{n-1}).$ A ball in $\R^{n+1}$ with radius $r$ and center $X$ is denoted by $B_r(X)$ and for simplicity $B_r = B_r(0)$. Also we use $\mathcal{B}_r$ to denote the $n$-dimensional ball $B_r \cap \{z=0\}$. 

Let $v(X)$ be a continuous non-negative function in $B_1$. We associate to $v$ the following sets: \begin{align*}
& B_1^+(v) := B_1 \setminus \{(x,0) : v(x,0) = 0 \} \subset \R^{n+1};\\
& \mathcal{B}_1^+(v):= B_1^+(v) \cap \mathcal{B}_1 \subset \R^{n};\\
& F(v) := \p_{\R^n}\mathcal{B}_1^+(v)\cap \mathcal{B}_1 \subset \R^{n}.
\end{align*}  Often subsets of $\R^n$ are embedded in $\R^{n+1}$, as it will be clear from the context. 
$F(v)$ is called the free boundary of $v$.

We consider the free boundary problem,

\begin{equation}\label{FB}\begin{cases}
\text{div}(|z|^\beta \nabla g) = 0, \quad \textrm{in $B_1^+(g) ,$}\\
\dfrac{\p g}{\p U}= 1, \quad \textrm{on $F(g)$}, 
\end{cases}\end{equation}
where $\beta=1-2\alpha, 0<\alpha<1$
$$\dfrac{\p g}{\p U}(x_0):=\di\lim_{t \rightarrow 0^+} \frac{g(x_0+t\nu(x_0),0)} {t^\alpha } , \quad \textrm{$X_0=(x_0,0) \in F(g)$}.$$ 
Here $\nu(x_0)$ denotes the unit normal to $F(g)$ at $x_0$ pointing toward $\mathcal{B}_1^+(g)$ and $U$ is the function defined in \eqref{U}. 

\subsection{The solution U}

Recall that $$U(t,z)=h^{2\alpha}, \quad \quad h:=r^{1/2} \cos \frac \theta 2.$$
The function $h$ is harmonic and it is easy to check that it satisfies
$$h_t=\frac {h}{2r}, \quad \quad |\nabla h|=\frac 12 r^{-1/2}, \quad \quad \frac{h_z}{z}=\frac{1}{4rh}.$$
We obtain
$$\triangle U + \beta \frac{U_z}{z} = 2\alpha (2\alpha-1) h^{2 \alpha -2} (|\nabla h|^2-h\frac {h_z}{z})=0,$$
and since $U$ is $C^2$ in its positive set, it is a viscosity solution.

Clearly the $(n+1)$ dimensional function $U(X):=U(x_n,z)$ is a solution with the free boundary $F(U)=\{x_n=0\}$. Notice that $$\frac{U_n}{U}=\frac{U_t}{U}= \frac{\alpha}{r}.$$

\subsection{Viscosity solutions}
We now introduce the notion of viscosity solutions to \eqref{FB}. 
First we need the following standard notion.

\begin{defn}Given $g, v$ continuous, we say that $v$
touches $g$ by below (resp. above) at $X_0 \in B_1$ if $g(X_0)=
v(X_0),$ and
$$g(X) \geq v(X) \quad (\text{resp. $g(X) \leq
v(X)$}) \quad \text{in a neighborhood $O$ of $X_0$.}$$ If
this inequality is strict in $O \setminus \{X_0\}$, we say that
$v$ touches $g$ strictly by below (resp. above).
\end{defn}

\begin{defn}\label{defsub} We say that $v \in C(B_1)$ is a (strict) comparison subsolution to \eqref{FB} if $v$ is a  non-negative function in $B_1$ which is even with respect to $\{z=0\}$, $v$ is $C^2$ in the set where it is positive and it satisfies
\begin{enumerate} \item $\text{div}(|z|^\beta \nabla v) \geq 0$ \quad in $B_1\setminus \{z=0\}$;\\
\item $F(v)$ is $C^2$ and if $x_0 \in F(v)$ we have
$$v (x,z) = a U((x-x_0) \cdot \nu(x_0), z)+ o(|(x-x_0,z)|^{\alpha}), \quad \textrm{as $(x,z) \rightarrow (x_0,0),$}$$ with $$a \geq 1,$$ where $\nu(x_0)$ denotes the unit normal at $x_0$ to $F(v)$ pointing toward $\mathcal{B}_1^+(v);$\\
\item Either $v$ satisfies (i) strictly or $a >1.$
\end{enumerate} 
\end{defn}

Similarly one can define a (strict) comparison supersolution. 

\begin{defn}\label{definition}We say that $g$ is a viscosity solution to \eqref{FB} if $g$ is a  continuous non-negative function in $B_1$ which satisfies
\begin{enumerate} \item $g$ is locally $C^{1,1}$ in $B^+_1(g)$, even with respect to $\{z=0\}$ and solves (in the viscosity sense)
$$\text{div}(|z|^\beta \nabla g) = 0 \quad \text{in $B_1 \setminus \{z=0\}$;}$$ \item Any (strict) comparison subsolution (resp. supersolution) cannot touch $g$ by below (resp. by above) at a point $X_0 = (x_0,0)\in F(g). $\end{enumerate}\end{defn}

\begin{rem}\label{nondiv} Observe that the equation in (i) can be written in the following non-divergence form
$$\triangle g + \beta \frac{g_z}{z} =0.$$ This fact will be used throughout the paper.
\end{rem}

\begin{rem}\label{c11} We notice that in view of Lemma 2.1 in \cite{S},  $g$  satisfies part (i) in Definition \ref{definition} if and only if 
$g$ solves $$\text{div}(|z|^\beta \nabla g) = 0 \quad \text{in $B_1^+(g$),}$$ in the distributional sense. Equivalently, $g$ is a local minimizer in $B_1^+(g)$ to the energy functional 
$$\int |z|^\beta |\nabla g|^2 dX.$$ In view of this remark, we can apply the standard maximum/comparison principle to functions that satisfy part (i) of Definition \ref{definition}. \end{rem}

\begin{rem}\label{rescale} We remark that if $g$ is a viscosity solution to \eqref{FB} in $B_\rho$, then 
\begin{equation}\label{rr}
g_{\rho}(X) = \rho^{-\alpha} g(\rho X), \quad X \in B_1
\end{equation} 
is a viscosity solution to \eqref{FB} in $B_1.$
\end{rem}

We also introduce the notion of viscosity solutions for the fractional Laplace free boundary problem \eqref{ACalpha}  in the Introduction.
\begin{defn}\label{defsubfrac} We say that $u$ is a viscosity solution to \eqref{ACalpha} if $u$ is a  non-negative continuous function in $\Omega$ and it satisfies
\begin{enumerate} 
\item $(-\Delta)^\alpha u = 0$ \quad in $\Omega$;\\
\item at any point $x_0\in F(u) \cap \Omega$ that admits a tangent ball from either the positive set $\{u>0\}$ or from the zero set $\{u=0\}$ we have
$$u (x) = \left( (x-x_0)^\alpha \cdot \nu(x_0)\right)^++ o(|(x-x_0)|^{\alpha}),$$ where $\nu(x_0)$ denotes the unit normal at $x_0$ to $F(u)$ pointing toward $B_1^+(u).$\\
\end{enumerate} 
\end{defn}

\subsection{Expansion at regular points}
In order to explain better the free boundary conditions in the definitions above we recall Lemma 7.5 from \cite{DS1} about the expansion of solutions $g$ to the equation
\begin{equation}\label{geq}
\text{div}(|z|^\beta \nabla g)=0 \quad \mbox{in} \quad B_1^+(g),
\end{equation}
near points on $F(g)$ that have a tangent ball either from the positive side of $g$ or from the zero-side. The proof in \cite{DS1} is for the case $\alpha=1/2$, however it uses only boundary Harnack inequality (see Theorem \ref{bhi}) and it works identically for any $\alpha \in (0,1)$. 

\begin{prop}\label{exp}
Let $g \in C^{\alpha}(B_1)$, $g \ge 0$, satisfy \eqref{geq}. 
If $$0 \in F(g),  \quad \mathcal{B}_{1/2}(1/2 e_n) \subset B_1^+(g),$$
then 
$$g= a U + o(|X|^\alpha), \quad \mbox{for some $a>0$}.$$ 
The same conclusion holds for some $a \ge 0$ if $$\mathcal{B}_{1/2}(-1/2 e_n) \subset \{g=0\}.$$
\end{prop}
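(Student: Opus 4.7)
The plan is to follow the blow-up argument of Lemma~7.5 in \cite{DS1}, whose only analytic input is the boundary Harnack estimate (Theorem~\ref{bhi}), valid for any $\alpha \in (0,1)$. First I would introduce the rescalings $g_\rho(X) := \rho^{-\alpha} g(\rho X)$ for $\rho \in (0,1/2)$. Since $g \in C^\alpha(B_1)$ and $g(0) = 0$, the family $\{g_\rho\}$ is uniformly bounded on $B_{1/2}$ and equicontinuous on compact subsets. The tangent-ball hypothesis $\mathcal{B}_{1/2}(\tfrac{1}{2}e_n) \subset B_1^+(g)$ implies $\{g(\cdot,0) = 0\} \cap \mathcal{B}_1 \subset \{x_n \le |x|^2\}$, so after rescaling $\{g_\rho(\cdot,0) = 0\} \cap \mathcal{B}_1 \subset \{x_n \le \rho|x|^2\}$, a set which shrinks to $\{x_n \le 0\}$ in Hausdorff distance as $\rho \to 0^+$. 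Extracting a subsequence $\rho_k \to 0$, Arzel\`a--Ascoli yields a limit $g_0 \ge 0$ that solves $\mathrm{div}(|z|^\beta \nabla g_0) = 0$ on $B_{1/2} \setminus \{x_n \le 0,\, z=0\}$ and vanishes continuously on the slit $\{x_n \le 0,\, z = 0\}$.

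Next I would identify $g_0$ as a multiple of $U$. Both $g_0$ and $U$ are non-negative solutions on the slit domain vanishing on the slit, so by boundary Harnack the ratio $g_0/U$ extends H\"older-continuously up to the slit. Applying the scaling identity $g_\rho(\lambda X) = \lambda^\alpha g_{\lambda\rho}(X)$ to further subsequential limits, and using boundary Harnack to pin down the common value of every blow-up at a reference point such as $X = e_n$, one concludes that the limit is independent of the subsequence and homogeneous of degree $\alpha$. A homogeneous-of-degree-$\alpha$ solution on the slit domain whose ratio with $U$ is continuous up to the slit must then equal $aU$ for some $a \ge 0$.

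To promote this subsequential statement to the pointwise expansion $g = aU + o(|X|^\alpha)$, I would invoke the quantitative form of boundary Harnack: the oscillation of $g/U$ on $B_r \setminus \{x_n \le 0,\, z=0\}$ decays like a power of $r$, so the ratio has a genuine limit at the origin which must coincide with the $a$ above; writing $g = U(a + o(1))$ and using $U(X) \le C|X|^\alpha$ gives the stated expansion. In the positive-side case, $a>0$ follows from interior Harnack inside the tangent ball, which gives $g(\tfrac12 e_n) \ge c_0 > 0$, combined with a comparison from below against a small multiple of $U$ supported in that ball; in the zero-side case only $a \ge 0$ is asserted and no lower bound is needed.

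The main obstacle is that the nodal sets of $g$ (the unknown free boundary $F(g)$) and of $U$ (the fixed half-hyperplane $\{x_n \le 0,\, z=0\}$) do not coincide, which prevents a direct application of boundary Harnack to the pair $(g,U)$. This mismatch is resolved by the blow-up device above, in which the tangent-ball assumption flattens the nodal set of $g_\rho$ to the half-hyperplane in the limit; alternatively one can sandwich $g$ between shifted barriers $U((x_n \pm \varepsilon, z))$ and let $\varepsilon \to 0$, which is the route taken in \cite{DS1} for $\alpha = 1/2$ and which carries over verbatim for general $\alpha$.
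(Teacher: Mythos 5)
The paper offers no proof of this proposition beyond the citation to Lemma~7.5 of \cite{DS1}, so your attempt is filling in details the paper omits, and your general toolkit (boundary Harnack, tangent-ball geometry, scaling) is the right one. However, the blow-up argument as you have written it has a genuine gap precisely where you assert that the limit $g_0$ ``vanishes continuously on the slit $\{x_n \le 0,\, z=0\}$.'' The tangent ball from the positive side only gives the one-sided inclusion $\{g(\cdot,0)=0\}\cap\mathcal B_1\subset\{x_n\le|x|^2\}$; it says nothing about the zero set containing a thick region, so after rescaling the nodal set of $g_\rho$ is merely \emph{contained} in a set that shrinks to $\{x_n\le 0\}$, and the Hausdorff limit of the nodal sets can be a much smaller subset of the half-hyperplane. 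Consequently $g_0$ need not vanish on the whole slit, and the boundary Harnack comparison of $g_0$ with $U$ (which does vanish on the whole slit) cannot be invoked as you do. The same problem resurfaces more acutely in your ``promotion'' step, where you apply quantitative boundary Harnack to the ratio $g/U$ on $B_r\setminus\{x_n\le 0,\,z=0\}$: this requires $g$ itself to vanish on that slit, which is false. You correctly identify this nodal-set mismatch as the central obstruction in your final paragraph, but then claim the blow-up device resolves it, which it does not; the barrier-sandwiching route you mention as an alternative is in fact the device that must do the work here.

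A second, milder gap is the uniqueness of the blow-up. The scaling identity $g_\rho(\lambda X)=\lambda^\alpha g_{\lambda\rho}(X)$ and ``pinning at $e_n$'' show compatibility of nested subsequences but do not by themselves exclude different limits along interlacing sequences $\rho_k\to 0$; the standard way to get uniqueness (and hence the full $o(|X|^\alpha)$ expansion rather than a subsequential one) is exactly the oscillation-decay estimate from boundary Harnack applied at every dyadic scale, which again needs the nodal set to be trapped between two shifted half-hyperplanes at each scale. So the two gaps are really one: the argument needs a two-sided trapping $U(x_n-\eps_r,z)\le \text{(suitable comparison)}\le U(x_n+\eps_r,z)$ with $\eps_r=o(r)$ at scale $r$, obtained by constructing the auxiliary solution on the exact slit domain $B_r\setminus\{x_n\le Cr^2,\,z=0\}$ with boundary data $g$ and comparing it with $g$ via the maximum principle, and this is what the DS1 proof does. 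Your proposal gestures at this in the last sentence but does not carry it out, and the blow-up framing you lead with obscures rather than supplies the missing estimate.
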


Since viscosity solutions have the optimal $C^\alpha$ regularity (see \cite{CafRS}, \cite{DS1}), a consequence of the proposition above is the following 

\begin{cor}
The function $u$ is a viscosity solution to \eqref{ACalpha} if and only if its extension to $\R^{n+1}$ (reflected evenly across $z=0$) is a viscosity solution to \eqref{FB}. 
\end{cor}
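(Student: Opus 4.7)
The plan is to verify both implications separately, decoupling the PDE condition from the free boundary condition. The PDE condition is essentially automatic: by the Caffarelli--Silvestre extension, $(-\Delta)^\alpha u=0$ in $\{u>0\}$ is equivalent to $\mathrm{div}(|z|^\beta\nabla g)=0$ across $\mathcal{B}_1^+(g)$ for the even extension $g$, and standard regularity away from $\{z=0\}$ combined with the optimal $C^\alpha$ regularity from \cite{CafRS,DS1} matches the analytic conditions in Definitions \ref{defsubfrac}(i) and \ref{definition}(i). The substance of the proof concerns the free boundary conditions.

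The key tool is Proposition \ref{exp}: at any $X_0=(x_0,0)\in F(g)$ admitting a tangent ball from either side, there is a coefficient $a\ge 0$ such that
$$g(X)= a\,U\bigl((x-x_0)\cdot\nu,\,z\bigr)+o(|X-X_0|^\alpha).$$
Restricting to $\{z=0\}$ and using $U(t,0)=(t^+)^\alpha$, this reads $u(x)= a\,((x-x_0)\cdot\nu)^\alpha_+ + o(|x-x_0|^\alpha)$, so both notions of viscosity solution reduce to pinning down $a=1$ at such regular free boundary points.

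For the forward direction, assume $u$ is a viscosity solution to \eqref{ACalpha}, and suppose for contradiction that a strict comparison subsolution $v$ of \eqref{FB} touches $g$ from below at $X_0\in F(g)$. The $C^2$ regularity of $F(v)$ and the inclusion $\{v>0\}\subset\{g>0\}$ provide a ball tangent to $\{u>0\}$ at $x_0$ with inner unit normal $\nu=\nu_v$; Definition \ref{defsubfrac}(ii) then forces the expansion above for $u$ with coefficient $1$, and Proposition \ref{exp} upgrades this to the expansion for $g$ with $a=1$. Comparing with $v=a_v\,U\bigl((x-x_0)\cdot\nu,z\bigr)+o(|X-X_0|^\alpha)$, where $a_v\ge 1$ and the normals agree because the tangent ball of $v$ forces the normal in $g$'s expansion to be $\nu_v$, the inequality $v\le g$ rules out $a_v>1$, and then the strict part of Definition \ref{defsub}(iii) yields a contradiction via a Hopf-type argument for $g-v$ applied to $\mathrm{div}(|z|^\beta\nabla\cdot)$. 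The comparison supersolution case is symmetric.

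For the reverse direction, suppose $g$ is a viscosity solution to \eqref{FB} and $x_0\in F(u)$ admits a tangent ball from $\{u>0\}$ (the case $\{u=0\}$ is analogous). Proposition \ref{exp} gives the expansion above with some $a>0$, and one must show $a=1$. If $a>1$, pick $a'\in(1,a)$ and translate the free boundary of $a'\,U((x-x_0)\cdot\nu,z)$ slightly into $\{u>0\}$; for a sufficiently small translation this produces a strict comparison subsolution that touches $g$ from below at a nearby free boundary point, contradicting Definition \ref{definition}(ii). If $a<1$ one builds analogously a strict comparison supersolution using the tangent ball from $\{u=0\}$, including the case $a=0$. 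The main technical obstacle is keeping the perturbations strict while ensuring they genuinely touch $g$: since Proposition \ref{exp} controls the remainder only to order $o(|X-X_0|^\alpha)$, the translation parameter must be chosen to dominate this error term, a standard but delicate computation using the explicit form of $U$ together with the boundary Harnack inequality (Theorem \ref{bhi}).
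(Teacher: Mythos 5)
The paper offers no explicit proof of this corollary; it simply asserts that it follows from Proposition \ref{exp} together with the $C^\alpha$ regularity of viscosity solutions. Your overall strategy---reduce both notions of viscosity solution, at regular free boundary points, to the condition that the coefficient $a$ in the expansion $g=aU+o(|X-X_0|^\alpha)$ equals $1$---is indeed the intended route, and your treatment of the PDE condition and of the forward direction is essentially correct (the ``Hopf-type'' step should really be a boundary Harnack argument for $g-v$, which vanishes on $\{g=0\}$ near $X_0$, but that is a routine fix).

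The reverse direction, however, contains a genuine gap. You propose translating $a'U((x-x_0)\cdot\nu,z)$ ``slightly into $\{u>0\}$'' and claim this produces a strict comparison subsolution touching $g$ from below at a nearby free boundary point. This does not work as stated, for two reasons. First, if you shift $F(a'U)$ into $\{u>0\}$ (inside the tangent ball), then $F(v)\cap F(g)=\emptyset$ near $X_0$, so the touching cannot occur at a common free boundary point and no contradiction with Definition \ref{definition}(ii) is produced. Second, even if one tries to arrange $v\le g$ with contact at $X_0$ itself, the inequality $g-a'U(\cdot-\eta)=(a-a')U+o(|X-X_0|^\alpha)-\bigl(a'U(\cdot-\eta)-a'U\bigr)$ cannot be concluded to be nonnegative near $X_0$: where $U$ degenerates (near $\{(x-x_0)\cdot\nu\le 0,\, z=0\}$) the comparison function $U$ can be much smaller than $|X-X_0|^\alpha$, so the $o(|X-X_0|^\alpha)$ error in Proposition \ref{exp} can dominate. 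Shrinking the translation parameter does not cure this, because the problem is the anisotropy between $U$ and $|X-X_0|^\alpha$, not the size of $\eta$. A working argument must either restrict attention to a nontangential cone where $U\gtrsim |X-X_0|^\alpha$ and then propagate the inequality by boundary Harnack, or replace the flat translate by the radial barriers $v_R$ of Proposition \ref{sub} combined with the sliding comparison principle of Corollary \ref{compmon}, or pass to the blow-up $g_s(X)=s^{-\alpha}g(X_0+sX)\to aU$ and argue by compactness. Your proposal correctly identifies that there is delicacy here, but the fix you gesture at (choosing the translation parameter to dominate the error) does not address the actual obstruction.
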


\subsection{Flatness assumption} Theorem \ref{mainT} is stated under the flatness assumption of the free boundary $F(g)$. 
As in Lemma 7.9 in \cite{DS1} this implies closeness between the function $g$ and the one-dimensional solution $U.$ 
Precisely we have

\begin{lem}\label{differentassumption} Assume $g$ solves \eqref{FB}. Given any $\eps>0$ there exist $\bar \eps>0$ and 
$\delta>0$ depending on $\eps$ such that if 
\begin{equation*} \{x \in \mathcal{B}_1 : x_n \leq -\bar \eps\} \subset \{x \in \mathcal{B}_1 : g(x,0)=0\} \subset \{x \in \mathcal{B}_1 : x_n \leq \bar \eps \},\end{equation*} then the rescaling $g_\delta$ (see \eqref{rr}) satisfies
$$U(X-\eps e_n) \le g_\delta(X) \le U(X+\eps e_n)  \quad \mbox{in $B_1$.}$$
\end{lem}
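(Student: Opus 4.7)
I argue by compactness and contradiction, following the template of Lemma 7.9 in \cite{DS1}. Suppose the conclusion fails for some $\epsilon_0 > 0$: fix a small $\delta_0 > 0$ to be chosen below, and take $\bar\epsilon_k \to 0$; by the failure hypothesis one may select viscosity solutions $g_k$ to \eqref{FB} with flatness constant $\bar\epsilon_k$ such that $(g_k)_{\delta_0}$ violates the sandwich at level $\epsilon_0$ at some point of $B_1$.

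By the optimal $C^\alpha$ regularity and non-degeneracy of viscosity solutions to \eqref{FB} from \cite{CafRS}, the sequence $\{g_k\}$ is precompact in the locally uniform topology; extracting a subsequence, $g_k \to g_\infty$ locally uniformly in $B_1$. The flatness with $\bar\epsilon_k \to 0$ together with non-degeneracy forces $F(g_\infty) = \{x_n = 0\} \cap \mathcal B_1$, and stability of viscosity solutions under locally uniform convergence gives that $g_\infty$ solves \eqref{FB}. Applying Proposition \ref{exp} at the origin (where tangent half-balls fit from both sides) yields $g_\infty = aU + o(|X|^\alpha)$ near $0$ with $a > 0$; the viscosity free boundary condition then forces $a = 1$, since otherwise the strict comparison sub/supersolution $a'U$ with $a' \neq 1$, $|a'-1| < |a-1|$, would touch $g_\infty$ at $0$ from the appropriate side. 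By $\alpha$-homogeneity of $U$, the expansion promotes to $(g_\infty)_\delta \to U$ uniformly in $B_1$ as $\delta \to 0$.

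Choose $\delta_0$ so that $\|(g_\infty)_{\delta_0} - U\|_{L^\infty(B_1)} < \eta$ for $\eta > 0$ small (to be fixed depending on $\epsilon_0$). The pointwise sandwich
$$U(X - \epsilon_0 e_n) \le (g_\infty)_{\delta_0}(X) \le U(X + \epsilon_0 e_n) \quad \text{in } B_1$$
is then obtained by direct comparison: where the relevant shifted profile vanishes (resp.\ where $(g_\infty)_{\delta_0}$ itself vanishes for the upper bound) the inequality is automatic, while in the complementary regions a maximum principle argument for $\text{div}(|z|^\beta \nabla \cdot)$, combining $L^\infty$-closeness of $(g_\infty)_{\delta_0}$ to $U$ with the quantitative monotonicity $U_{x_n} = \alpha U/r > 0$ in the positive set of $U$, produces the desired inequality provided $\eta$ is small. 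Since $(g_k)_{\delta_0} \to (g_\infty)_{\delta_0}$ uniformly on compact subsets and the sandwich has a strict margin, it also holds for $(g_k)_{\delta_0}$ when $k$ is large, contradicting the choice of $g_k$.

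The main technical obstacle will be this last comparison step: the degeneracy of $U$ at the free boundary means uniform $L^\infty$-closeness does not by itself imply the sandwich with shifted copies of $U$, and careful handling of the regions where both sides degenerate simultaneously (near $\{z=0,\, x_n < -\epsilon_0\}$ for the upper bound, and near $\{z=0,\, x_n > \epsilon_0\}$ for the lower) via maximum principle comparison on appropriate sub-regions is needed. The compactness and limit identification in the first two paragraphs are largely routine given Proposition \ref{exp} and the results of \cite{CafRS}.
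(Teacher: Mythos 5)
The overall compactness-and-blow-up strategy is the right one, but as written your argument has a logical circularity and leaves the key comparison step as a ``technical obstacle'' that your proposed tools do not in fact resolve. For the circularity: you fix $\delta_0$ at the outset, select counterexamples $g_k$ that violate the conclusion \emph{for that particular} $\delta_0$, pass to a limit $g_\infty$ (which therefore also depends on $\delta_0$ through the $g_k$), and then ``choose $\delta_0$'' so that $(g_\infty)_{\delta_0}$ is $L^\infty$-close to $U$. The $\delta_0$ you wish to choose at the end was already used to define $g_\infty$. To break the circle one needs a \emph{universal} rate in the expansion at $0$, so that a single $\delta(\eps_0)$ works for every possible limit; the soft $o(|X|^\alpha)$ of Proposition \ref{exp} does not give this, but the Boundary Harnack Principle (Theorem \ref{bhi}) applied to $g_\infty$ and $U$, both vanishing on the flat set $P$, does: $|g_\infty/U-1|\le C|X|^\gamma$ with $C,\gamma$ universal. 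Note also that your compactness step tacitly assumes a uniform $L^\infty$ bound on $g_k$ (the $C^\alpha$ estimate of \cite{CafRS} is interior and scales with $\|g_k\|_{L^\infty(B_1)}$), which does not follow from the flatness hypothesis alone and has to be established.

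The deeper gap is the passage from $L^\infty$-closeness of $(g_\infty)_\delta$ to $U$ to the $U$-sandwich, which you flag but do not solve, and whose location you misidentify. The problematic region is a neighborhood of the edge $L=\{x_n=0,\,z=0\}$, not $\{x_n>\eps_0\}$ or $\{x_n<-\eps_0\}$: on $\{x_n=0\}$ near $z=0$ one has $U(X)\sim|z|^\alpha$ while $U(X-\eps_0 e_n)\sim |z|^{2\alpha}>0$, so the margin $U(X)-U(X-\eps_0 e_n)$ is not bounded below and the inequality $g_\delta\ge U(X)-\eta\ge U(X-\eps_0 e_n)$ fails for $|z|$ small, no matter how small $\eta$ is. A maximum-principle argument on sub-regions does not repair this since there is no usable barrier at the thin free boundary. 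What is actually needed is the \emph{multiplicative} closeness $(1-\mu)U\le(g_\infty)_\delta\le(1+\mu)U$, which follows from the quantitative boundary Harnack estimate above, combined with the strict separation \eqref{cor1} of Lemma \ref{basic}, $U(X+\eps e_n)\ge(1+c\eps)U(X)$ in $B_1$: choosing $\mu\le c\eps_0/(1+c\eps_0)$ then yields the sandwich, and the same multiplicative bound is what lets you transfer the (non-strict near $L$) sandwich from $(g_\infty)_\delta$ back to $(g_k)_\delta$.
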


In view of Lemma \eqref{differentassumption} we may assume from now on that 
$$U(X-\eps e_n) \le g(X) \le U(X+\eps e_n) \quad \mbox{in $B_1$,}$$
for some $\eps>0$. 

\subsection{Comparison principle}
We state the comparison principle for problem \eqref{FB}, which in view of Remark \ref{c11} holds in this setting as well. Its proof is standard and can be found in \cite{DR}. As an immediate consequence one obtains Corollary \ref{compmon} which is the formulation of the Comparison Principle used in this paper.

\begin{lem}[Comparison Principle] Let $g, v_t \in C(\overline{B}_1)$ be respectively a solution and a family of  subsolutions to \eqref{FB}, $t \in [0,1]$. Assume that
\begin{enumerate}
\item $v_0 \leq g,$ in $\overline{B}_1;$
\item $v_t \leq g$ on $\p B_1$ for all $t \in [0,1];$
\item $v_t < g$ on $\mathcal{F}(v_t)$ which is the boundary in $\p B_1$ of the set $\p \mathcal{B}_1^+(v_t) \cap \p \mathcal{B}_1$, for all $t\in [0,1];$
\item $v_t(x)$ is continuous in $(x,t) \in \overline{B}_1 \times [0,1]$ and $\overline{\mathcal{B}_1^+(v_t)}$ is continuous in the Hausdorff metric.
\end{enumerate}
Then 
\begin{equation*} v_t \leq g \quad \text{in $\overline{B}_1$, for all $t\in[0,1]$.}
\end{equation*}
\end{lem}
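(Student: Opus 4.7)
The plan is the standard continuity (sliding) argument in the parameter $t$. I define
$$T = \sup\{t \in [0,1] : v_s \leq g \text{ in } \overline{B}_1 \text{ for every } s \in [0,t]\}.$$
Hypothesis (1) gives $T \geq 0$, and the joint continuity assumption in (4) lets me pass $v_s \leq g$ to $s = T$, so $v_T \leq g$ in $\overline{B}_1$. I want to show $T = 1$. If $T < 1$, the definition of $T$ together with (4) produces a contact point $X_0 \in \overline{B}_1$ with $v_T(X_0) = g(X_0)$; the first task is to arrange that this $X_0$ lies in the open ball. Hypothesis (3) forbids $X_0 \in \mathcal{F}(v_T)$, and on the remainder of $\partial B_1$ either both functions vanish (not a genuine contact, since moving $t$ slightly does not create a violation there) or both are strictly positive, in which case the contact can be pushed inward by (2) and a boundary-point maximum principle. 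So I may take $X_0 \in B_1$.

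Now I split into two cases. If $X_0 \in B_1^+(v_T)$, then near $X_0$ the function $w := g - v_T \geq 0$ is a supersolution of $\text{div}(|z|^\beta \nabla \cdot) = 0$; rewriting in the non-divergence form of Remark \ref{nondiv} and using even reflection in $z$ to handle the case $z(X_0)=0$, the strong maximum principle applies. If Definition \ref{defsub}(i) holds strictly for $v_T$, then $w \equiv 0$ is inconsistent with the strict divergence inequality, which is an immediate contradiction. Otherwise $w \equiv 0$ throughout the connected component of $B_1^+(v_T)$ containing $X_0$, and this equality propagates up to a point of $F(v_T)$. At such a free boundary point $v_T$ is then a strict comparison subsolution (with coefficient $a > 1$ forced by Definition \ref{defsub}(iii)) touching $g$ from below, which is forbidden by Definition \ref{definition}(ii). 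If instead $X_0 \in F(v_T)$, then $g(X_0) = 0$ too so $X_0 \in F(g)$, and $v_T$ itself directly touches $g$ from below at a free boundary point, giving the same contradiction.

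The main obstacle I foresee is the boundary localization step, namely turning hypothesis (3) together with the Hausdorff continuity in (4) into the statement that the first contact between $v_T$ and $g$ must occur at an interior point of $B_1$ (and is either an honest positive-side contact or a free-boundary contact). Once this is secured, the two interior cases are handled cleanly — one by the strong maximum principle combined with the strictness in Definition \ref{defsub}(iii), the other directly by the viscosity touching condition in Definition \ref{definition}(ii).
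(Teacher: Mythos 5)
Your sliding argument in $t$ --- define $T$ as the supremum, note $v_T \le g$ by hypothesis (iv), extract a contact point $X_0$ from a sequence of violations, and then split into an interior-positive case (strong maximum principle) and a free-boundary case (the viscosity condition in Definition~\ref{definition}(ii)) --- is the standard strategy, and it is the one used in the reference [DR] that the paper cites for this lemma. The two cases you do treat are handled correctly: in $B_1^+(v_T)$ the difference $g - v_T$ is a nonnegative supersolution of the non-divergence equation of Remark~\ref{nondiv} (after even reflection in $z$), so the strong maximum principle either gives an immediate contradiction when $v_T$ satisfies Definition~\ref{defsub}(i) strictly, or forces $g \equiv v_T$ in $B_1^+(v_T)$, which collides with hypothesis (iii) and with Definition~\ref{definition}(ii) at $F(v_T)$; and a contact at $X_0 \in F(v_T)$ does put $X_0 \in F(g)$ (since $\{g=0\}\subset\{v_T=0\}$ thin-sided and points of $\mathcal B_1^+(v_T)$ near $X_0$ lie in $\mathcal B_1^+(g)$), so Definition~\ref{definition}(ii) applies directly.

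The gap is exactly the one you flag at the end, and it is unresolved in the proposal. Your dichotomy at $X_0$ is not exhaustive: the limit contact point may lie in the \emph{relative interior} of the thin zero set $\{(x,0): v_T(x,0)=0\}\cap\mathcal B_1$, i.e.\ in neither $B_1^+(v_T)$ nor $F(v_T)$, where the contact is the trivial $0=0$ and neither of your two arguments applies. Ruling this out is precisely the role of hypothesis (iv): the Hausdorff continuity of $\overline{\mathcal B_1^+(v_t)}$ guarantees that the thin zero set of $v_t$ still covers a fixed thin neighborhood of $X_0$ for $t$ near $T$, after which a comparison argument in the resulting slit domain around $X_0$ shows violating points $X_k$ with $v_{t_k}(X_k) > g(X_k)$ cannot accumulate there. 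You mention (iv) only in passing and never deploy it in the case analysis. The same deficiency affects the boundary localization: hypothesis (iii) excludes only $\mathcal F(v_T)$ from $\partial B_1$, and your dismissal of the remaining boundary contacts (``not a genuine contact,'' ``can be pushed inward'') is informal; the zero-zero boundary contacts again require (iv) plus a slit-domain comparison or Hopf-type argument. Until these two places are filled in, the proposal is an outline of the right proof rather than a complete one.
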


\begin{cor} \label{compmon}Let $g$ be a solution to \eqref{FB} and let $v$ be a  subsolution to \eqref{FB} in $B_2$ which is strictly monotone increasing in the $e_n$-direction in $B_2^+(v)$. Call
$$v_t(X):=v(X+ t e_n), \quad X \in B_1.$$
Assume that for $-1 \leq t_0 < t_1\leq 1$
$$v_{t_0} \leq g, \quad \text{in $\overline{B}_1,$}$$ and
$$v_{t_1} \leq g \quad  \text{on $\p B_1,$}  \quad v_{t_1} < g  \quad \text{on $\mathcal{F}(v_{t_1}).$}$$
Then 
\begin{equation*} v_{t_1} \leq g \quad \text{in $\overline{B}_1$.}
\end{equation*}

\end{cor}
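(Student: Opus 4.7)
The natural approach is to apply the Comparison Principle (the lemma stated immediately above the corollary) to the one-parameter family $\{v_s\}_{s \in [t_0, t_1]}$, reparametrized linearly to $[0,1]$. The role of strict monotonicity of $v$ in the $e_n$-direction is to upgrade the boundary hypotheses, which are given only at the two endpoints $t_0$ and $t_1$, to hold for every intermediate $s$.

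Three of the four hypotheses of the Comparison Principle are routine. Condition (i), $v_{t_0} \le g$ in $\overline{B}_1$, is given. Condition (iv) (joint continuity in $(X,s)$ and Hausdorff continuity of $\overline{\mathcal{B}_1^+(v_s)}$) is immediate because $v_s(X)=v(X+se_n)$ depends continuously on $(X,s)$ and $\overline{\mathcal{B}_1^+(v_s)}$ is simply the translate of $\overline{\mathcal{B}_1^+(v)}$ by $-se_n$, intersected with $\mathcal{B}_1$. For condition (ii) ($v_s \le g$ on $\partial B_1$), strict monotonicity of $v$ in $e_n$ on $B_2^+(v)$, combined with $v\ge 0$ off the positive set, yields $v_s \le v_{t_1}$ pointwise in $\overline{B}_1$ for every $s \le t_1$; the hypothesis $v_{t_1}\le g$ on $\partial B_1$ then gives $v_s \le g$ there.

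The main obstacle is condition (iii), the strict inequality $v_s < g$ on $\mathcal{F}(v_s)$ for every $s \in [t_0, t_1]$. At a point $X_0 \in \mathcal{F}(v_s) \subset \partial B_1$ we have $v_s(X_0)=0$, so what must be shown is that $g(X_0)>0$. For $s=t_1$ this is the direct hypothesis. For $s < t_1$, the point $X_0+se_n$ lies in $\overline{\{v>0\}} \cap \{z=0\}$ with $v(X_0+se_n)=0$, hence it belongs to $F(v)$, which is $C^2$ by the definition of comparison subsolution. Strict monotonicity of $v$ in $e_n$ on $B_2^+(v)$ forces the inward normal $\nu$ to $F(v)$ at this point to satisfy $\nu_n>0$ (using the asymptotic expansion $v \sim aU$ at regular points and $U_t>0$). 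Consequently the segment $\{X_0 + te_n : t \in (s, t_1]\}$ enters $\{v>0\}$ immediately and, again by monotonicity in $e_n$ on the positive set, stays there. In particular $v_{t_1}(X_0) = v(X_0+t_1 e_n) > 0$, and then the boundary hypothesis $v_{t_1}\le g$ on $\partial B_1$ yields $g(X_0) \ge v_{t_1}(X_0) > 0 = v_s(X_0)$.

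With all four hypotheses verified, the Comparison Principle delivers $v_s \le g$ in $\overline{B}_1$ for every $s\in [t_0,t_1]$, and specializing to $s=t_1$ gives the desired conclusion.
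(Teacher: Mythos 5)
Your overall strategy is exactly the intended one: rescale the parameter and feed the one-parameter family $\{v_s\}_{s\in[t_0,t_1]}$ into the Comparison Principle lemma, with monotonicity of $v$ in the $e_n$-direction used to propagate the two endpoint hypotheses to intermediate $s$. The verifications of (i), (ii), and (iv) are fine.

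The verification of (iii) is where you are on thinner ice. You reduce everything to showing $v_{t_1}(X_0)>0$ for $X_0\in\mathcal F(v_s)$, and to do so you assert $\nu_n>0$ at the free boundary point $y_0=X_0+se_n\in F(v)$, citing monotonicity and the expansion $v\sim aU$. This is not an immediate consequence of the stated hypotheses: monotonicity rules out $\nu_n<0$ and (with a bit of work, using $C^2$ regularity of $F(v)$) $\nu_n=0$ with nonzero curvature in the $e_n$-direction, but the degenerate case where $F(v)$ contains an $e_n$-segment through $y_0$ is not obviously excluded by ``strict monotonicity on $B_2^+(v)$'' alone. In that case $v_{t_1}(X_0)$ could be $0$ and your chain of inequalities ($g\ge v_{t_1}>0$ on $\partial B_1$) would only give $g(X_0)\ge 0$, not the needed strict inequality. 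There is a cleaner route that sidesteps $\nu_n>0$ entirely and uses the second part of the hypothesis: if $v_{t_1}(X_0)>0$, conclude as you did from $v_{t_1}\le g$ on $\partial B_1$; if instead $v_{t_1}(X_0)=0$, note that $X_0\in\mathcal F(v_s)$ supplies points $x\in\partial\mathcal B_1$ arbitrarily close to $x_0$ with $v_s(x,0)>0$, and monotonicity of $v$ in $e_n$ on its positivity set gives $v_{t_1}(x,0)\ge v_s(x,0)>0$; hence $X_0\in\mathcal F(v_{t_1})$, and the strict hypothesis $v_{t_1}<g$ on $\mathcal F(v_{t_1})$ yields $g(X_0)>0=v_s(X_0)$. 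Apart from this one under-justified step, your proof matches the paper's (outsourced to De Silva--Roquejoffre) argument.
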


\subsection{Harnack inequalities for $A_2$ weights}

The weight involved in our problem, i.e. $w(z)=|z|^\beta$ where $\beta=1-2\alpha$ with $\alpha \in (0,1)$ belongs to the well-known class of $A_2$ functions as defined by Muchenhoupt \cite{M}. Equations in divergence form involving such weights have been studied in a series of papers by Fabes {et al} in \cite{F1,F2,F3}. In the following, we review the results needed for our purposes. 

\begin{thm} [Harnack inequality \cite{F1}] 
Let $u \ge 0$ be a solution of  $$div(|z|^\beta \nabla u)=0 \quad \mbox{in} \quad  
B_{1}\subset\mathbb  R^{n}.$$ Then, $$\sup_{B_{1/2}} u \leq C \inf_{B_{1/2}} u$$
for some constant $C$ depending only on $n$ and $\beta$.
\end{thm}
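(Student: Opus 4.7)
The plan is to adapt Moser's iteration to the degenerate setting, which is the program carried out by Fabes, Kenig and Serapioni for equations with Muckenhoupt weights.

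The first ingredient is to verify that $w(z)=|z|^\beta$ lies in the $A_2$ class, which holds precisely because $|\beta|<1$: the Muckenhoupt condition reduces to a one-variable calculation on intervals crossing $\{z=0\}$, where integrability of both $|z|^\beta$ and $|z|^{-\beta}$ near $z=0$ uses exactly $|\beta|<1$. From this one obtains, by standard harmonic-analytic machinery, a weighted Poincar\'e inequality and a weighted Sobolev embedding
$$\left(\frac{1}{w(B_r)}\int_{B_r}|v|^{2\kappa}w\,dX\right)^{1/(2\kappa)} \le C r \left(\frac{1}{w(B_r)}\int_{B_r}|\nabla v|^2 w\,dX\right)^{1/2}$$
for some $\kappa=\kappa(n,\beta)>1$ and any $v$ compactly supported in $B_r$.

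Next, I would set up the weighted Caccioppoli estimate for powers of $u$. Testing the equation $\mathrm{div}(w\nabla u)=0$ against $\eta^{2}u^{p-1}$ (with $\eta$ a smooth cutoff, and adding a regularizing constant to $u$ when $p<1$ to avoid singularity) yields
$$\int \eta^2 |\nabla u^{p/2}|^2 w\,dX \le C(p)\int |\nabla\eta|^2 u^p w\,dX,\qquad p\neq 1.$$
Combining with the weighted Sobolev inequality applied to $\eta u^{p/2}$ gives a reverse-H\"older improvement of the weighted $L^p$ norm of $u$ over shrinking balls. Iterating geometrically in radii and exponents in the usual Moser fashion then produces, for every fixed $p_0>0$,
$$\sup_{B_{1/2}} u \le C \left(\frac{1}{w(B_{3/4})}\int_{B_{3/4}} u^{p_0}w\right)^{1/p_0},\qquad \left(\inf_{B_{1/2}} u\right)^{-1}\le C\left(\frac{1}{w(B_{3/4})}\int_{B_{3/4}} u^{-p_0}w\right)^{1/p_0}.$$

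The main obstacle is to bridge these two bounds, which I would handle by a John--Nirenberg argument tailored to $A_2$ measures. Testing the equation against $\eta^{2}u^{-1}$ yields $\int \eta^2|\nabla\log u|^2 w \le C\int|\nabla\eta|^2 w$, and combined with the weighted Poincar\'e inequality this places $\log u$ in a weighted $\mathrm{BMO}$ class on $B_{3/4}$. The John--Nirenberg lemma for $A_2$ measures then provides a small exponent $p_0=p_0(n,\beta)>0$ for which
$$\left(\frac{1}{w(B_{3/4})}\int_{B_{3/4}} u^{p_0}w\right)\left(\frac{1}{w(B_{3/4})}\int_{B_{3/4}} u^{-p_0}w\right) \le C,$$
and chaining this with the two Moser bounds above yields $\sup_{B_{1/2}} u \le C\inf_{B_{1/2}} u$ with $C$ depending only on $n$ and $\beta$.
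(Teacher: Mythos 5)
The paper does not prove this statement: it simply cites the theorem of Fabes, Kenig, and Serapioni \cite{F1} as a known result about degenerate elliptic equations with $A_2$ weights. Your sketch correctly reconstructs the Moser-iteration argument of that reference — verifying $|z|^\beta\in A_2$ for $|\beta|<1$, deriving weighted Sobolev and Poincar\'e inequalities, running the iteration with Caccioppoli estimates for positive and negative powers, and bridging the two via the logarithmic estimate and John--Nirenberg for $A_2$ measures — so you have supplied precisely the proof that the paper outsources to \cite{F1}.
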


\begin{thm} [Boundary Harnack principle \cite{F2}] \label{bhi}
Let $\Omega \subset \R^n$ be a Lipschitz domain, $0 \in \p \Omega$. 
Let $u >0 $ and $v$ be solutions of  $$div(|z|^\beta \nabla u)=div(|z|^\beta \nabla v)=0 \quad \mbox {in} \quad B_1 \setminus (\Omega \times \{0\}),$$  
that vanish continuously on $B_1\cap  (\Omega \times\{0\})$ . Then, 
$$\left [\frac v u \right ]_{C^\gamma(B_{1/2})} \le C$$
for some constants $C$, $\gamma$ depending on $n$ and the Lipschitz constant of $\p \Omega$.
\end{thm}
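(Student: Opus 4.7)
The plan is to follow the classical route to boundary Harnack principles pioneered by Caffarelli--Fabes--Mortola--Salsa: establish a Carleson-type estimate for $u$ and $v$, and then run an oscillation-decay iteration for the quotient $v/u$. The two facts one is allowed to use freely are the interior Harnack inequality for $A_2$-weighted divergence-form operators (the companion result cited from \cite{F1} just above), and the quantitative geometric properties of Lipschitz domains, in particular the existence of corkscrew points and Harnack chains of bounded length, both controlled only by $n$ and by the Lipschitz constant of $\partial\Omega$.

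First I would prove a Carleson estimate: for each boundary point $X_0 \in (\partial\Omega\times\{0\})\cap \overline{B}_{1/2}$ and each small $r>0$, if $A_r(X_0)$ denotes a corkscrew point of $B_r(X_0)\setminus(\Omega\times\{0\})$ at distance $\sim r$ from $\Omega\times\{0\}$, then
\[
\sup_{B_{r/2}(X_0)} u \;\le\; C\,u(A_r(X_0)).
\]
The argument is the standard one: any point uniformly away from $\Omega\times\{0\}$ is connected to $A_r(X_0)$ by a Harnack chain of universal length, so the bound there is immediate; for points close to $\Omega\times\{0\}$ one builds a barrier, comparing $u$ with an explicit positive $A_2$-harmonic subsolution obtained from a regularized distance to $\Omega\times\{0\}$ raised to an appropriate small power, and shows that if $u$ exceeded its value at $A_r$ by a large factor, iteration on nested balls would force $u$ to grow faster than the barrier permits, contradicting $u\ge 0$.

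With the Carleson estimate in hand the oscillation decay is routine. Suppose $0\le v/u\le M$ in $B_r(X_0)\cap\{u>0\}$. At $A_r(X_0)$ either $v(A_r)\ge \tfrac{M}{2}u(A_r)$ or $v(A_r)\le \tfrac{M}{2}u(A_r)$. In the first case the function $w=Mu-v\ge 0$ is again $A_2$-harmonic, vanishes on the same portion of $\Omega\times\{0\}$, and satisfies $w(A_r)\le \tfrac{M}{2}u(A_r)$; applying the Carleson estimate to $w$ and interior Harnack to $u$ yields
\[
w(X) \;\le\; C\,w(A_r) \;\le\; \tfrac{C}{2}M\,u(A_r) \;\le\; (1-\delta)\,M\,u(X) \qquad \text{in } B_{r/2}(X_0),
\]
i.e.\ $v/u\ge \delta M$ in $B_{r/2}(X_0)$ for a universal $\delta>0$. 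The second case is symmetric and decreases the supremum of $v/u$ by the same factor. Iterating on dyadic scales gives
\[
\operatorname{osc}_{B_{2^{-k}r}(X_0)}(v/u) \;\le\; (1-\delta)^k\,\operatorname{osc}_{B_r(X_0)}(v/u),
\]
which is Hölder continuity of $v/u$ at each boundary point with exponent $\gamma=\log_2(1-\delta)^{-1}$ depending only on $n$, $\beta$ and the Lipschitz constant. Combined with the interior $C^\gamma$ estimate for ratios of positive $A_2$-harmonic functions (itself a consequence of the interior Harnack inequality of \cite{F1}), this yields the claimed bound $[v/u]_{C^\gamma(B_{1/2})}\le C$.

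The main obstacle is the Carleson estimate. The "domain" $B_1\setminus(\Omega\times\{0\})$ is not a standard Lipschitz domain of $\R^{n+1}$ but the complement of a thin $n$-dimensional Lipschitz set that moreover sits inside the degeneracy hyperplane $\{z=0\}$ of the weight. The barrier therefore has to be constructed in this slit geometry, and one must verify that a regularized distance to $\Omega\times\{0\}$, raised to a suitable power, produces a genuine $A_2$-subsolution vanishing on $\Omega\times\{0\}$ with the correct polynomial growth. This is where the $A_2$ character of $|z|^\beta$ and the Lipschitz regularity of $\partial\Omega$ must combine quantitatively; the rest of the proof is careful bookkeeping of \cite{F1}.
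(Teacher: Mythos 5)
The paper does not prove this theorem: it is quoted as a known result of Fabes--Jerison--Kenig (see \cite{F2}, and more precisely \cite{F3}, which is the paper that proves boundary Harnack for $A_2$-weighted divergence-form equations). So there is no ``paper's own proof'' to compare your attempt against, and what you have written is a from-scratch reconstruction of that literature result.

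On the merits: the strategy you outline -- corkscrew points and Harnack chains controlled by the Lipschitz constant, a Carleson estimate, and then the Caffarelli--Fabes--Mortola--Salsa oscillation-decay iteration on $v/u$ -- is exactly the standard route, and the oscillation-decay step as you present it is correct and complete given the Carleson estimate and interior Harnack. (One small cleanup: since $v$ is not assumed nonnegative, one first replaces $v$ by $v+Ku$ for a suitable $K$ before running the two-case dichotomy, a standard reduction.) You also correctly observe that the exponent and constant must depend on $\beta$ as well; the theorem statement in the paper omits $\beta$ from the list, which is a minor imprecision.

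The genuine gap is where you acknowledge it: the Carleson estimate. In this ``slit'' geometry the boundary is a codimension-one Lipschitz set $\Omega\times\{0\}$ lying inside the degeneracy hyperplane $\{z=0\}$ of the weight, and the whole content of the theorem is the construction of a barrier adapted to that situation. Saying ``a regularized distance to $\Omega\times\{0\}$ raised to a small power produces a genuine $A_2$-subsolution'' is a plausible heuristic imported from the unweighted half-space case, but it is not verified; naively, powers of a regularized distance need not be sub- or supersolutions of $\operatorname{div}(|z|^\beta\nabla\cdot)$ near $\{z=0\}$, precisely because the weight degenerates there. In fact the natural building block in this geometry is the explicit homogeneous solution $U$ introduced in Section~2, which solves the weighted equation away from the half-slit $\{x_n\le 0,\, z=0\}$, vanishes on it, and has the right growth rate $r^\alpha$; in the Lipschitz case one would tilt and translate $U$, using the Lipschitz constant of $\partial\Omega$ to control the error, and compare. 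Absent that (or an equivalent quantitative barrier computation), the Carleson estimate -- and hence the theorem -- is not actually established by your argument. Since this is precisely the part the cited reference \cite{F3} supplies, it would be appropriate here either to carry out the barrier computation in the slit geometry or to simply cite the Fabes--Jerison--Kenig result, as the paper does.
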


\section{The function $\tilde g$ and the linearized problem} 

In this section we recall the notion of $\eps$-domain variations of a viscosity solution to \eqref{FB}.
We also introduce the linearized problem associated to \eqref{FB}.

\subsection{The function $\tilde g$.}Let $\eps>0$ and let $g$ be a  continuous non-negative function in $\overline{B}_\rho$. 
Let  $$P:= \{X \in \R^{n+1} : x_n \leq 0, z=0\}, \quad L:= \{X \in \R^{n+1}: x_n=0, z=0\}.$$
To each $X \in \R^{n+1} \setminus P$ we associate $\tilde g_\eps(X) \subset \R$ such that 
\begin{equation}\label{deftilde} U(X) = g(X - \eps w e_n), \quad \forall w \in \tilde g_\eps(X).\end{equation} 

We call $\tilde g_\eps$  the $\eps$- domain variation associated to $g$. 
By abuse of notation, from now on  we write $ \tilde g_\eps(X)$ to denote any of the values in this set. As noted in \cite{DR}, 
if g satisfies\begin{equation}\label{flattilde}U(X - \eps e_n) \leq g(X) \leq U(X+\eps e_n) \quad \textrm{in $B_\rho,$}\end{equation} for all $\eps >0$  we can associate to $g$ a possibly multi-valued function $\tilde{g}_\eps$ defined at least on $B_{\rho-\eps} \setminus P$ and taking values in $[-1,1]$ which satisfies
\begin{equation} \label{til}U(X) = g(X - \eps \tilde{g}_\eps(X)e_n).\end{equation} 

Moreover if $g$ is strictly monotone  in the $e_n$-direction in $B^+_\rho(g)$, then $\tilde{g}_\eps$ is single-valued. 

The following comparison principle is proved in \cite{DR} in the case $\alpha=1/2.$ The proof remains still valid as it only involves Corollary \ref{compmon} and some elementary considerations following from the definition of $\tilde g$. \begin{lem}\label{linearcomp}
Let $g, v$ be respectively a solution and a subsolution to \eqref{FB} in $B_2$, with $v$ strictly increasing in the $e_n$-direction in $B_2^+(v).$ Assume that $g$ satisfies the flatness assumption \eqref{flattilde} in $B_2$
for $\eps>0$ small  and that $\tilde v_\eps$ is defined in $B_{2-\eps} \setminus P$ and satisfies  $$|\tilde v_\eps| \leq C.$$
If,
\begin{equation}\label{start}
 \tilde v_\eps + c \leq  \tilde g_\eps \quad \text{in $(B_{3/2} \setminus \overline{B}_{1/2}) \setminus P,$} 
\end{equation} then 
\begin{equation}\label{conclusion}
 \tilde v_\eps + c \leq \tilde g_\eps  \quad \text{in $B_{3/2} \setminus P.$} 
\end{equation} 
\end{lem}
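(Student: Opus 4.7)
The plan is to translate \eqref{conclusion} into a pointwise inequality between $v$ and $g$ and then invoke Corollary \ref{compmon}. Setting $v_t(X) := v(X + t\, e_n)$, the defining relation \eqref{til} immediately yields $\widetilde{(v_t)}_\eps = \tilde v_\eps + t/\eps$. Thus, with $\tau := \eps c$, the desired \eqref{conclusion} is the statement $\widetilde{(v_\tau)}_\eps \le \tilde g_\eps$ on $B_{3/2}\setminus P$, and the hypothesis \eqref{start} is the same inequality on the open annulus. Strict monotonicity of $v_\tau$ in $e_n$ converts such an inequality of $\eps$-domain variations into a pointwise comparison: if $\widetilde{(v_\tau)}_\eps(X) \le \tilde g_\eps(X)$ and $Y := X - \eps \tilde g_\eps(X)\, e_n$, then
$$g(Y) = U(X) = v_\tau\!\left(Y + \eps(\tilde g_\eps - \widetilde{(v_\tau)}_\eps)\,e_n\right) \ge v_\tau(Y),$$
and as $X$ varies over $B_{3/2}\setminus P$ the points $Y$ sweep out $\mathcal B_{3/2}^+(g)$, which together with the thin boundary behavior yields $v_\tau \le g$ in $\overline{B}_{3/2}$.

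Next I would set up the sliding family $\{v_t\}_{t\in[t_0,\tau]}$. The assumption $|\tilde v_\eps|\le C$ combined with \eqref{til} and the monotonicity of $U$ in $e_n$ gives the flatness $U(X - \eps C e_n)\le v(X)\le U(X + \eps C e_n)$, while \eqref{flattilde} applied to $g$ gives $g(X) \ge U(X-\eps e_n)$. Choosing $t_0 := -\eps(C+2)$ then yields $v_{t_0}(X)\le U(X - 2\eps e_n) \le U(X-\eps e_n) \le g(X)$ throughout $\overline{B}_{3/2}$, providing the initial comparison. Continuity of $v_t$ and of $\overline{\mathcal B_{3/2}^+(v_t)}$ in $t$ is automatic from the translation. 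The boundary condition $v_t \le g$ on $\partial B_{3/2}$, and the strict inequality $v_t < g$ on $\mathcal F(v_t)\cap\partial B_{3/2}$, are obtained by transferring \eqref{start}---which holds in an open neighborhood of $\partial B_{3/2}$---through the dictionary of the first paragraph, using monotonicity in $t$ to cover all $t \le \tau$ and the strict gap $c > 0$ at $t = \tau$.

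Corollary \ref{compmon}, applied in $B_{3/2}$ (the proof works at any radius) to the family $\{v_t\}_{t\in[t_0,\tau]}$, then produces $v_\tau \le g$ in $\overline{B}_{3/2}$, which converts back to \eqref{conclusion}. The main technical point is the dictionary of the first paragraph: $\tilde g_\eps$ may be multi-valued (since $g$ need not be strictly monotone in $e_n$), so the equivalence between $\widetilde{(v_\tau)}_\eps \le \tilde g_\eps$ and $v_\tau \le g$ has to be read in an appropriate selection-theoretic sense and extended to $F(g)$, where $\tilde g_\eps$ is not defined, by continuity from the interior of the positive set. Once this bookkeeping is in place, the rest is a routine sliding argument via Corollary \ref{compmon}.
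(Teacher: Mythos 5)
Your proof follows the same route the paper intends: it explicitly defers the argument to \cite{DR}, noting only that "it only involves Corollary \ref{compmon} and some elementary considerations following from the definition of $\tilde g$," which is precisely your dictionary between $\eps$-domain variations and pointwise ordering, combined with the sliding family $v_t$ fed into Corollary \ref{compmon}. The only place you wave your hands a bit is the strict inequality $v_{t_1}<g$ on $\mathcal F(v_{t_1})$: it does not come directly from the gap $c>0$ (the hypothesis $\tilde v_\eps+c\le\tilde g_\eps$ only translates to the non-strict $v_\tau\le g$); the clean fix is to apply Corollary \ref{compmon} with $t_1=\tau-\delta$ for small $\delta>0$ (then at any $Y\in\mathcal F(v_{t_1})$ one has $v_\tau(Y)>0$, hence $g(Y)\ge v_\tau(Y)>0=v_{t_1}(Y)$) and let $\delta\to 0$; likewise, because of the $\eps$-loss in the dictionary \eqref{gtildeg}, the corollary should be applied in a slightly smaller ball (say $B_{5/4}$) rather than literally $B_{3/2}$, which is harmless since the conclusion on the annulus is already given by hypothesis.
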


Finally, we recall the following useful fact. Given $\eps>0$ small and a Lipschitz function $\tilde{\varphi}$ defined on $B_{\rho}(\bar X)$,  with values in $[-1,1]$,  there exists a unique function $\varphi_\eps$ defined at least on $B_{\rho-\eps}(\bar X)$ such that \begin{equation}\label{deftilde2} U(X) = \varphi_\eps(X - \eps \tilde{\varphi}(X)e_n), \quad X \in B_\rho(\bar X).\end{equation}
Moreover such function $\varphi_\eps$ is increasing in the $e_n$-direction. 
If 
 $g$ satisfies the flatness assumption \eqref{flattilde} in $B_1$ and $\tilde{\varphi}$ is as above then (say $\rho,\eps<1/4$, $\bar X \in B_{1/2},$)
\begin{equation}\label{gtildeg}\tilde \varphi \leq \tilde g_\eps \quad \text{in $B_\rho(\bar X) \setminus P$} \Rightarrow \varphi_\eps \leq g \quad \text{in $B_{\rho -\eps}(\bar X)$}.\end{equation}

\subsection{The linearized problem.}  
We introduce here the linearized problem associated to \eqref{FB}.  Here and later $U_n$ denotes the $x_n$-derivative of the function $U$ defined in \eqref{U}. 

Given  $w \in C(B_1)$  and  $X_0=(x'_0,0,0) \in B_1 \cap L,$ we call
$$|\nabla_r w |(X_0) := \di\lim_{(x_n,z)\rightarrow (0,0)} \frac{w(x'_0,x_n, z) - w (x'_0,0,0)}{r}, \quad   r^2=x_n^2+z^2 .$$
Once the change of unknowns  \eqref{deftilde} has been done, the linearized problem associated to \eqref{FB} is 
\begin{equation}\label{linear}\begin{cases} \text{div}(|z|^\beta \nabla (U_n w)) = 0, \quad \text{in $B_1 \setminus P,$}\\ |\nabla_r w|=0, \quad \text{on $B_1\cap L$.}\end{cases}\end{equation}

Our notion of viscosity solution for this problem is below. 

\begin{defn}\label{linearsol}We say that $w$ is a solution to \eqref{linear}  if $w \in C^{1,1}_{loc}
(B_1\setminus P)$, $w$ is even with respect to $\{z=0\}$ and it satisfies (in the viscosity sense)
\begin{enumerate}\item $\text{div}(|z|^\beta \nabla (U_n w)) = 0$ \quad in $B_1 \setminus \{z=0\}$;\\ \item Let $\phi$ be continuous around  $X_0=(x'_0,0,0) \in B_1 \cap L$ and satisfy $$\phi(X) = \phi(X_0) + a(X_0)\cdot (x' - x'_0) + b(X_0) r + O(|x'-x'_0|^2 + r^{1+\gamma}), $$ for some $\gamma>0$ and $$b(X_0) \neq 0.$$  If $b(X_0) >0$ then  $\phi$ cannot touch $w$ by below at $X_0$,  and if $b(X_0)< 0$ then $\phi$ cannot touch $w$ by above at $X_0$. \end{enumerate}\end{defn}

In Section 8, we will investigate the regularity of solutions to \eqref{linear} and obtain the following corollary, which we use in the proof of the improvement of flatness.

\begin{cor}\label{LIF} There exists a universal constant $\rho>0$ such that if $w$ solves \eqref{linear} and $|w|\leq 1$ in $B_1, $ $w(0)=0$ then 
$$a_0 \cdot x' - \frac{1}{8} \rho \leq w(X) \leq a_0\cdot x' +\frac{1}{8} \rho \quad \text{in $B_{2\rho}$}$$ for some vector $a_0 \in \R^{n-1.}$

\end{cor}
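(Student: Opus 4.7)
The plan is to read this corollary as a straight consequence of the regularity theory for \eqref{linear} developed in Section~8. Specifically, I expect Section~8 to establish that any bounded solution $w$ of \eqref{linear} admits, at each point $X_0 \in B_{1/2} \cap L$, a pointwise $C^{1,\gamma}$-type expansion
\[
w(X) = w(X_0) + a(X_0)\cdot(x'-x_0') + b(X_0)\,r + O\!\left(|x'-x_0'|^2 + r^{1+\gamma}\right),
\]
with $|a(X_0)|, |b(X_0)| \le C\|w\|_{L^\infty(B_1)}$ and $\gamma>0$ universal. The very form of this expansion matches the class of admissible test functions in Definition~\ref{linearsol}(ii), so the viscosity condition on $L$ can be invoked directly: if $b(X_0)\neq 0$, the linear-plus-$r$ polynomial (with a small quadratic perturbation on the $x'$-variables to make touching strict) would touch $w$ from one side at $X_0$, contradicting (ii). Hence $b(X_0)\equiv 0$ along $L$, which is the precise rigorous meaning of the boundary condition $|\nabla_r w|=0$ for smooth-enough solutions.

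Given that, I would apply this expansion at $X_0 = 0$, using the hypothesis $w(0)=0$ and setting $a_0 := a(0) \in \mathbb{R}^{n-1}$. Then
\[
|w(X) - a_0 \cdot x'| \le C\bigl(|x'|^2 + r^{1+\gamma}\bigr) \qquad \text{for } X \in B_{1/2}.
\]
For $X \in B_{2\rho}$ we have $|x'|\le 2\rho$ and $r\le 2\rho$, so the right-hand side is bounded by $C\bigl(4\rho^2 + (2\rho)^{1+\gamma}\bigr) \le C'\,\rho^{1+\delta}$ with $\delta := \min(1,\gamma)>0$. Choosing a universal $\rho>0$ small enough that $C'\rho^{1+\delta} \le \rho/8$ yields the two-sided bound claimed in the corollary.

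The main obstacle is clearly not this last packaging step but the regularity theorem that Section~8 must supply. The real work is to prove a $C^{1,\gamma}$ estimate at points of $L$ for the degenerate operator $\operatorname{div}(|z|^\beta \nabla(U_n \cdot))$ with the $|\nabla_r w|=0$ condition on the codimension-two set $L$. The natural route is a standard improvement-of-flatness / compactness scheme for the linearized problem itself: show that if $w$ is suitably flat with respect to an affine function $a\cdot x'$ at scale $1$, then at some smaller scale it is flatter with respect to a slightly corrected affine function. This in turn relies on the $A_2$-weighted Harnack and boundary Harnack estimates (Theorem~\ref{bhi}), a compactness argument to pass to a limit solving the same problem, and Liouville-type rigidity for global solutions of \eqref{linear} with linear growth — the step that will need genuine thought is identifying the right class of global profiles (essentially $a\cdot x'$) and ruling out the unwanted $r$-term at infinity, which is exactly what the viscosity condition at $L$ is designed to prevent.
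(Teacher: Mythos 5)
Your final packaging step is exactly what the paper does: invoke the pointwise expansion at $X_0=0$, use $w(0)=0$ to set $a_0:=a(0)$, bound the remainder on $B_{2\rho}$ by $C'\rho^{1+\delta}$, and choose a universal $\rho$ with $C'\rho^{1+\delta}\le\rho/8$. So the deduction of Corollary~\ref{LIF} from the regularity theorem (Theorem~\ref{lineimpflat}) is identical to the paper's.

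Where you diverge is in the route to that expansion. You propose an improvement-of-flatness / compactness scheme for the linearized equation together with a Liouville classification of global solutions with linear growth, and you plan to kill the $r$-coefficient by touching $w$ with a test polynomial and invoking Definition~\ref{linearsol}(ii). The paper instead takes a variational shortcut. It introduces the energy $J(h)=\int_{B_1}|z|^\beta U_n^2|\nabla h|^2\,dX$ and works with minimizers (classical solutions). Translation invariance in $x'$ means difference quotients in $x'$ of a minimizer are again minimizers, giving interior $C^\gamma$ regularity of all $x'$-derivatives directly, with no compactness passage (Lemma~\ref{derivativeH}). One then freezes $x'$, moves $\Delta_{x'}h$ to the right-hand side, and treats the resulting two-dimensional problem in $(t,z)$ by a boundary Harnack estimate that tolerates a right-hand side (Remark~\ref{fin}), yielding $h-h(x',0,0)=b(x')r+O(r^{1+\gamma})$. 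The $r$-coefficient is not removed by a touching argument: the paper shows that $b\equiv 0$ is exactly the first-variation condition for $J$, so minimizers satisfy it automatically, and the viscosity condition in Definition~\ref{linearsol}(ii) is used only to identify bounded viscosity solutions with these minimizers by comparison (which is how Theorem~\ref{lineimpflat} is obtained from Theorem~\ref{class}). Both roads lead to the same $C^{1,\gamma}$ estimate; the variational route avoids precisely the compactness argument and the Liouville step that you correctly flag as the technically delicate parts of your plan.
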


\section{Harnack Inequality}

This section is devoted to  a Harnack type inequality for solutions to our free boundary problem \eqref{FB}.

\begin{thm}[Harnack inequality]\label{mainH} There exists $\bar \eps > 0$  such that if $g$ solves \eqref{FB} and it satisfies 
\begin{equation}\label{flatH}U(X +\eps a_0 e_n) \leq g(X) \leq U(X+\eps b_0e_n) \quad \textrm{in $B_\rho(X^*), $}\end{equation}with
$$\eps (b_0 - a_0) \leq \bar \eps \rho, $$  then \begin{equation}\label{impr}U(X +\eps a_1 e_n) \leq g(X) \leq U(X+\eps b_1e_n) \quad \textrm{in $B_{\eta \rho}(X^*)$, }\end{equation} with $$a_0 \leq a_1\leq b_1 \leq b_0, \quad (b_1-a_1) \leq (1-\eta)(b_0-a_0),$$ for a small universal constant $\eta$. \end{thm}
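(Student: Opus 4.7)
\emph{Reduction.} By the rescaling invariance of \eqref{FB} (Remark \ref{rescale}), the function $g_\rho(X) := \rho^{-\alpha}g(X^* + \rho X)$ is a viscosity solution of \eqref{FB} in $B_1$ satisfying $U(X + \eps a_0 e_n) \le g_\rho(X) \le U(X + \eps b_0 e_n)$ in $B_1$ with $\eps(b_0 - a_0) \le \bar\eps$. So assume $\rho = 1$, $X^* = 0$. Write $c_0 := (a_0 + b_0)/2$, $s := \eps(b_0 - a_0)$, and fix a reference point $\bar X$ (say $\bar X = \tfrac{1}{5}e_n + \tfrac{1}{5}e_{n+1}$) that, for $\bar\eps$ small enough, stays at uniform positive distance from the free-boundary strip $\{-\eps b_0 \le x_n \le -\eps a_0,\ z = 0\}$.

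\emph{Dichotomy and interior Harnack.} Flatness gives $U(\bar X + \eps a_0 e_n) \le g(\bar X) \le U(\bar X + \eps b_0 e_n)$, so one of the two symmetric alternatives holds: either (A) $g(\bar X) \ge U(\bar X + \eps c_0 e_n)$, or (B) $g(\bar X) \le U(\bar X + \eps c_0 e_n)$. I treat (A) and deduce $a_1 := a_0 + \tau(b_0-a_0)$, $b_1 := b_0$; case (B) is symmetric. Set $h := g - U(\,\cdot\, + \eps a_0 e_n)$. Then $h \ge 0$ on the positive set of $U(\,\cdot\, + \eps a_0 e_n)$, where it solves $\text{div}(|z|^\beta \nabla h) = 0$. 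By smoothness of $U$ away from its free boundary,
\[
h(\bar X) \ge U(\bar X + \eps c_0 e_n) - U(\bar X + \eps a_0 e_n) \ge c_1 s
\]
for a universal $c_1 > 0$. The interior Harnack inequality for the $A_2$-weighted operator $\text{div}(|z|^\beta \nabla\,\cdot\,)$ (the theorem from \cite{F1}), applied along a chain of balls that stays inside the positive set of $U(\,\cdot\, + \eps a_0 e_n)$, propagates this to a bound $h \ge c_2 s$ on a subdomain $K \subset B_{3/4}$ kept at uniform distance from the free-boundary strip.

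\emph{Barrier and comparison.} For a small universal $\tau \in (0,1)$ to be chosen, construct a strict comparison subsolution $v$ of \eqref{FB} in $B_{1/2}$ of the form
\[
v(X) := U(X + \eps a_0 e_n) + \tau s\, \Phi(X),
\]
where $\Phi \ge 0$ is an explicit perturbation built from the conjugate harmonic functions in the $(x_n,z)$-plane of Section 2.2 and a smooth radial cutoff. The function $\Phi$ is designed so that $v$ is weighted-subharmonic, satisfies the strict inequality $\partial v/\partial U > 1$ on $F(v) = \{x_n = -\eps(a_0+\tau(b_0-a_0))\}$, and obeys $\tau s\, \Phi \le c_2 s$ on $\partial B_{1/2}$. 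The Harnack bound from the previous step then yields $v \le g$ on $\partial B_{1/2}$, and Corollary \ref{compmon} applied to the sliding family $v_t(X) := v(X + t e_n)$ forces $v \le g$ in $B_{1/2}$. This translates into $g(X) \ge U(X + \eps a_1 e_n)$ on $B_\eta$ with $a_1 = a_0 + \tau(b_0 - a_0)$, which is the desired improvement in case (A); choosing $\eta \le \min(1/2, 1 - \tau)$ handles both the ball-shrinking and oscillation-decay factors in the statement.

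\emph{Main obstacle.} The principal technical step is constructing $\Phi$ and verifying that $v$ is a strict comparison subsolution uniformly in $\alpha \in (0,1)$. For $\alpha = 1/2$ the degenerate weight disappears and $\Phi$ is a genuinely harmonic perturbation, as in \cite{DR}; for general $\alpha$ one must work with the $A_2$-weight $|z|^\beta$ and preserve the strict free-boundary condition $\partial v/\partial U > 1$. The explicit factorization $U = h^{2\alpha}$ from Section 2.2 together with the identities $U_n/U = \alpha/r$ and $\text{div}(|z|^\beta\nabla U) = 0$ provide enough flexibility: a natural ansatz is $\Phi = U(\,\cdot\, + \eps a_0 e_n)\cdot\psi$ with $\psi$ a suitable cutoff, reducing the subsolution test to a weighted-harmonic computation whose boundary behavior near $F(v)$ is controlled by the boundary Harnack inequality (Theorem \ref{bhi}).
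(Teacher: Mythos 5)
Your plan follows the same broad outline as the paper — rescale, dichotomy at an interior reference point, propagate the gain by Harnack, then push a sliding subsolution — but the heart of the matter, the barrier, is exactly what the paper has to work hardest on and what your proposal leaves unresolved, and the specific ansatz you suggest for it is in fact internally inconsistent.

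Two concrete issues. First, for the Harnack-propagation step the paper does not just apply interior Harnack to $h := g - U(\cdot + \eps a_0 e_n)$: that only yields $h \ge c_2 s$ on a compact set at positive distance from the free-boundary strip, and the subsequent boundary comparison on the sphere has to cover points arbitrarily close to $F(g)$. The paper instead invokes Lemma~\ref{basic} (whose proof, from \cite{DR}, uses the \emph{boundary} Harnack principle of Theorem~\ref{bhi} as well) to upgrade the pointwise gain to the multiplicative estimate $g \ge (1 + c'\eps)U$ \emph{uniformly up to $F$}, which is the form actually needed to verify $v_R^{t_1} \le g$ on $\partial B_{1/4}$.

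Second and more seriously, your proposed barrier $v = U(\cdot + \eps a_0 e_n) + \tau s\, \Phi$ with $\Phi = U(\cdot + \eps a_0 e_n)\,\psi$ has the \emph{same} zero set as $U(\cdot + \eps a_0 e_n)$, hence $F(v) = \{x_n = -\eps a_0\}$, contradicting your stated requirement $F(v) = \{x_n = -\eps(a_0 + \tau(b_0-a_0))\}$. With this ansatz $v$ vanishes identically on the strip $\{-\eps a_1 < x_n \le -\eps a_0,\,z=0\}$ where $U(\cdot + \eps a_1 e_n)$ is strictly positive, so $v \le g$ in $B_{1/2}$ cannot imply $g \ge U(\cdot + \eps a_1 e_n)$ and there is no improvement of flatness. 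What the paper does is construct the radial subsolution $v_R(X) = V_R(R - \sqrt{|x'|^2 + (x_n - R)^2},\,z)$ obtained by rotating the two–dimensional multiplicative perturbation $V_R(t,z) = U(t,z)\bigl((n-1)\tfrac{t}{R}+1\bigr)$ around the far-away axis $\{x = Re_n\}$; its free boundary is the sphere $\partial\mathcal B_R(Re_n)$, which near the origin bulges to the right by $\sim |x'|^2/(2R)$. This curvature is what allows all three estimates of Corollary~\ref{corest} to hold simultaneously: on the outer annulus $v_R(\cdot + \tfrac{c_0}{R}e_n) \le (1 + \tfrac{C_0}{R})U$, near the origin $v_R(\cdot + \tfrac{c_0}{R}e_n) \ge U(\cdot + \tfrac{c_0}{2R}e_n)$, and the slid-back $v_R(\cdot - \tfrac{C_1}{R}e_n) \le U$ everywhere. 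These are exactly the inputs to Corollary~\ref{compmon}, and a flat barrier (no matter how $\psi$ is chosen) cannot reproduce this balance. Finally, verifying that $v_R$ is a strict comparison subsolution for general $\beta = 1 - 2\alpha$ is its own computation — the paper carries it out in Step 1 of Proposition~\ref{sub} using $\partial_t U = \alpha U / r$ and the homogeneity of $U$ — and is precisely the ``uniformly in $\alpha$'' issue you flag but do not address.
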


Let $g$ be a solution to \eqref{FB} which satisfies $$U(X - \eps e_n) \leq g(X) \leq U(X+\eps e_n) \quad \textrm{in $B_1$.}$$Let $A_\eps$ be the following set
\begin{equation}\label{Aeps} A_{\eps} := \{(X, \tilde g_\eps(X))  : X \in B_{1-\eps} \setminus P\} \subset \R^{n+1} \times [a_0,b_0].\end{equation}
Since $\tilde g_\eps$ may be multivalued, we mean that given $X$ all pairs $(X, \tilde g_\eps(X))$ belong to $A_\eps$ for all possible values of $\tilde g_\eps(X).$ An iterative argument (see \cite{DR}) gives the following corollary of Theorem \ref{mainH}.

\begin{cor} \label{corHI}If  
$$U(X - \eps e_n) \leq g(X) \leq U(X+\eps e_n) \quad \textrm{in $B_1$,}$$ with $\eps \leq \bar \eps/2$, given $m_0>0$ such that $$2\eps (1-\eta)^{m_0} \eta^{-m_0} \leq \bar\eps,$$ then the set $A_\eps \cap (B_{1/2} \times [-1,1])$  is above the graph of a function $y = a_\eps(X)$ and it is below the graph of a function $y = b_\eps(X)$ with
$$ b_\eps - a_\eps \leq 2(1 - \eta)^{m_0-1},$$
and $a_\eps, b_\eps$ having a modulus of continuity bounded by the H\"older function $\alpha t^\beta$ for $\alpha, \beta$  depending only on $\eta$. 
\end{cor}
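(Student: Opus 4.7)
The plan is to iterate Theorem \ref{mainH} along a geometric sequence of shrinking balls centered at each base point $X^* \in B_{1/2}$, and then extract Hölder continuity from the pointwise geometric decay of oscillation by a standard Campanato-type argument.

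For fixed $X^* \in B_{1/2}$, set $\rho_k := \eta^k$ and build inductively for $k = 0, 1, \ldots, m_0$ nested numbers
$$-1 = a_0 \le a_1(X^*) \le \cdots \le a_k(X^*) \le b_k(X^*) \le \cdots \le b_1(X^*) \le b_0 = 1,$$
with $b_k - a_k \leq 2(1-\eta)^k$ and
$$U(X + \eps a_k(X^*) e_n) \leq g(X) \leq U(X + \eps b_k(X^*) e_n) \quad \text{on } B_{\rho_k}(X^*).$$
The base case is the hypothesis. For the inductive step, applying Theorem \ref{mainH} on $B_{\rho_k}(X^*)$ requires $\eps(b_k - a_k) \le \bar\eps\,\rho_k$, which reduces to $2\eps((1-\eta)/\eta)^k \le \bar\eps$. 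Shrinking the universal $\eta$ from Theorem \ref{mainH} to satisfy $\eta < 1/2$ makes the factor $((1-\eta)/\eta)^k$ monotone increasing in $k$, so the hypothesis $2\eps(1-\eta)^{m_0}\eta^{-m_0} \le \bar\eps$ is the binding constraint and controls every intermediate step. Setting $a_\eps(X^*) := a_{m_0}(X^*)$ and $b_\eps(X^*) := b_{m_0}(X^*)$ and evaluating the sandwich at $X = X^*$ gives $a_\eps \le \tilde g_\eps \le b_\eps$ on $A_\eps \cap (B_{1/2} \times [-1,1])$, with $b_\eps - a_\eps \le 2(1-\eta)^{m_0} \le 2(1-\eta)^{m_0-1}$.

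For the Hölder modulus, take $X^*, Y^* \in B_{1/2}$, and let $k \le m_0$ be chosen so that $\eta^{k+1} < |X^* - Y^*| \le \eta^k$ (take $k = m_0$ if $|X^* - Y^*| \le \eta^{m_0}$). Then $Y^* \in B_{\rho_k}(X^*)$ and a small neighborhood of $Y^*$ is still contained there, so the step-$k$ sandwich at $X^*$ pins $\tilde g_\eps$ into the interval $[a_k(X^*), b_k(X^*)]$ of width $2(1-\eta)^k$. Choosing $a_k(X^*), b_k(X^*)$ to be the tightest pair realizing the sandwich — equivalently, defining $a_\eps(X^*) := \liminf_{Y \to X^*} \tilde g_\eps(Y)$ and $b_\eps(X^*) := \limsup_{Y \to X^*} \tilde g_\eps(Y)$ — forces the refinement carried out at $Y^*$ to stay inside this same interval, so that $a_\eps(Y^*), b_\eps(Y^*) \in [a_k(X^*), b_k(X^*)]$. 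This yields
$$|a_\eps(X^*) - a_\eps(Y^*)| + |b_\eps(X^*) - b_\eps(Y^*)| \le 4(1-\eta)^k \le C|X^* - Y^*|^\beta, \qquad \beta = \frac{\log(1-\eta)^{-1}}{\log \eta^{-1}},$$
with $C$ depending only on $\eta$.

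The step I expect to be the main obstacle is the last nesting claim — that the refinements built independently at distinct base points $X^*, Y^*$ are compatible — because Theorem \ref{mainH} only improves flatness in a ball shrunk by the factor $\eta$ and each base point runs its own iteration. Adopting the $\liminf/\limsup$ definition (or, equivalently, the tightest-pair choice at each scale) handles this uniformly and removes the ambiguity coming from the potential multivaluedness of $\tilde g_\eps$; with that in hand the remaining calculation is the standard Campanato bookkeeping.
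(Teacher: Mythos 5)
The paper outsources the proof (``An iterative argument (see \cite{DR})''), and your iteration-plus-Campanato strategy is precisely that argument, so you have the right approach. The one genuine slip is the choice of the initial radius: you set $\rho_0 = 1$ so that the base case is the sandwich on $B_1(X^*)$, but for $X^* \in B_{1/2}$ the ball $B_1(X^*)$ is not contained in $B_1$, and the flatness hypothesis is only given on $B_1$. The correct normalization is $\rho_0 = 1/2$ (hence $\rho_k = \eta^k/2$), after which one runs the iteration only $m_0-1$ times rather than $m_0$ times. Tracking the constants, the applicability condition at the $k$-th step becomes $4\eps\bigl((1-\eta)/\eta\bigr)^k \le \bar\eps$; the binding case $k = m_0-2$ is implied by the stated hypothesis $2\eps\bigl((1-\eta)/\eta\bigr)^{m_0} \le \bar\eps$ once one shrinks the universal $\eta$ so that $(1-\eta)/\eta \ge \sqrt 2$, and stopping after $m_0-1$ steps gives exactly the width $2(1-\eta)^{m_0-1}$ appearing in the statement. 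In other words, the $(m_0-1)$ exponent in the conclusion --- which your version reproduces only by artificially weakening $2(1-\eta)^{m_0}$ to $2(1-\eta)^{m_0-1}$ --- is really the footprint of the halved starting radius.

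The rest is sound. Using the tightest pair (equivalently, $\liminf$/$\limsup$ of $\tilde g_\eps$ near $X^*$) is a clean way to make $a_\eps, b_\eps$ single-valued functions and to obtain the nesting compatibility between iterations at nearby base points, which is the heart of the Campanato step. The H\"older exponent $\beta = \log(1-\eta)/\log \eta$ and the constant $C = 4\eta^{-\beta}$ are correct. So: right proof, essentially identical to \cite{DR}, with the initial-radius bookkeeping needing to be redone to actually land on the stated constants rather than borrowing a free factor of $(1-\eta)$ at the end.
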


The proof of Harnack inequality follows as in the case $\alpha=1/2$. The key ingredient is the lemma below.
 
\begin{lem}\label{babyH}
There exists $\bar \eps > 0$ such that for all  $0 < \eps \leq \bar \eps$ if $g$ is a solution to \eqref{FB}  in $B_1$ such that  
\begin{equation}g(X) \geq U(X) \quad \text{in $B_{1/2},$}\end{equation} and at $\bar X  \in B_{1/8}( \frac{1}{4} e_n)$
\begin{equation}\label{Bound} g(\bar X) \geq U(\bar X + \eps e_n), 
\end{equation} then
 \begin{equation}\label{onesideimprov} 
 g(X) \geq U(X + \tau \eps e_n)  \quad \textrm{in $B_{\delta}$},  \end{equation} for universal constants $\tau, \delta.$
 Similarly, if
 \begin{equation*} g(X) \leq U(X) \quad \text{in $B_{1/2}$,}
\end{equation*} and 
$$g(\bar X) \leq U(\bar X - \eps e_n),$$ then
 \begin{equation*} 
 g(X) \leq U(X -\tau\eps e_n) \quad \textrm{in $B_{\delta}$}.\end{equation*}
\end{lem}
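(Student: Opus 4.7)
The plan is to propagate the pointwise gap at $\bar X$ to a uniform gap via the weighted Harnack inequality, then construct a strict comparison subsolution $v$ that implements a partial shift of $U$, and finally slide this subsolution into position using Corollary \ref{compmon}.

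First I would transfer the interior bound. Since $\bar X \in B_{1/8}(\tfrac14 e_n)$ stays a fixed distance away from $\{x_n=0, z=0\}$, the explicit formula for $U$ yields $U_n(\bar X)\geq c_0$ and $U_{nn}$ bounded, so $g(\bar X)-U(\bar X)\geq U(\bar X+\eps e_n)-U(\bar X)\geq \tfrac12 c_0\eps$ for $\eps\leq \bar\eps$ small. On the open set $\{U>0\}\cap B_{1/2}$ both $g$ and $U$ solve $\mathrm{div}(|z|^\beta\nabla\cdot)=0$, and $g-U\geq 0$ there. Applying the Fabes--Kenig--Serapioni Harnack inequality on a chain of balls contained in $\{U\geq c>0\}$ that connects $\bar X$ to, say, the annular region $\overline B_{1/4}\setminus B_{1/16}$ restricted to $\{|z|\geq 1/32\}$, I obtain a uniform lower bound $g-U\geq c_1\eps$ on such a ``collar'' $\mathcal N$.

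Next I construct the competitor. Pick a smooth $\psi\geq 0$ on $\overline{B}_{3/4}$, even in $z$, with $\psi\equiv 1$ on $\overline B_{1/16}$, $\psi\equiv 0$ on $\partial B_{3/4}$, and $\mathrm{div}(|z|^\beta\nabla\psi)\geq c_2>0$ in the transition annulus; an explicit radial choice like $\psi(X)=c(|X|^{-\gamma_0}-(3/4)^{-\gamma_0})_+$ (truncated to $1$ near the origin, smoothed) works for $\gamma_0$ large, using that $\mathrm{div}(|z|^\beta\nabla |X|^{-\gamma_0})>0$ for the weighted operator. Crucially I also arrange $\partial_n\psi\geq c_3>0$ on a neighborhood of the slab $\{x_n\approx 0\}$ within the transition region, which one can do by a small non-radial tilt of $\psi$ (adding a term like $\delta\chi(X)x_n$ with compact support). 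Set
$$v(X):=U\bigl(X+\tau\eps\,\psi(X)e_n\bigr),\qquad \tau\ll 1.$$
A direct computation, using $\mathrm{div}(|z|^\beta\nabla U)=0$ and the chain rule, shows $\mathrm{div}(|z|^\beta\nabla v)=\tau\eps U_n(\cdot)\,\mathrm{div}(|z|^\beta\nabla\psi)+O((\tau\eps)^2)\geq 0$ strictly in $\{v>0\}\setminus\{z=0\}$ for $\tau$ small. At any point $X_0\in F(v)$ the expansion of $v$ along the inward normal gives, as $s\to 0^+$, the leading behavior $v\sim (1+\tau\eps\,\partial_n\psi(X_0)+O((\tau\eps)^2))^\alpha\,U((X-X_0)\cdot\nu,z)$, so $v$ is a strict comparison subsolution in Definition \ref{defsub} with $a>1$.

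Finally I slide. For $t\leq 0$ set $v_t(X):=v(X-te_n)$, so $v_0=v$. Choose $M>0$ with $v_{-M}\equiv 0$ on $B_{3/4}$ (the free boundary is pushed out of the ball), so trivially $v_{-M}\leq g$. On the outer boundary $\partial B_{3/4}$, $\psi\equiv 0$ gives $v_t(X)=U(X-te_n)\leq U(X)\leq g(X)$ for all $t\leq 0$; on the inner region $\overline B_{1/16}$, the gap from Step 1 combined with $v_t=U(X+(\tau\eps-t)e_n)$ there yields $v_t<g$ strictly provided $\tau\eps-t\leq c_1\eps/(2C)$, which governs how far we may slide. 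One checks the continuity/strictness hypotheses of Corollary \ref{compmon} and increases $t$ continuously from $-M$ up to $0$, concluding $v\leq g$ in $\overline B_{3/4}$. Restricted to $B_{1/16}$ where $\psi\equiv 1$, this reads exactly $g(X)\geq U(X+\tau\eps e_n)$, giving \eqref{onesideimprov} with $\delta=1/16$. The supersolution (one-sided) case is symmetric.

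The main obstacle is the construction of $\psi$: it must simultaneously satisfy the strict weighted-subharmonicity on the annulus, vanish on $\partial B_{3/4}$, and have a strictly positive $e_n$-derivative along (a neighborhood of) the new free boundary so that $a>1$ genuinely at every FB point. For $\alpha=1/2$ (the case of \cite{DR}), one uses the harmonic Green-function-like barrier; here the same construction works verbatim because only the $A_2$-weighted Harnack inequality (Theorem for $A_2$ weights) and the explicit form of $U$ enter, both of which are available for every $\alpha\in(0,1)$.
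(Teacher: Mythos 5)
Your high-level strategy (propagate the pointwise gap via Harnack, build a comparison subsolution, slide it via Corollary \ref{compmon}) is the same as the paper's, but the specific subsolution you construct does not work, and the flaw is not cosmetic.

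Consider $v(X)=U\bigl(x_n+\tau\eps\psi(X),z\bigr)$. At a free boundary point $X_0$ with $\partial_n\psi(X_0)\neq 0$, writing $s:=\tau\eps\,\partial_n\psi(X_0)$ and $t:=(x-x_0)\cdot\nu$, the leading behaviour of $v$ is $U\bigl((1+s)t,z\bigr)$, and your claim is that this equals $(1+s)^\alpha U(t,z)+o(r^\alpha)$. That is false: expanding in $s$ and using the $\alpha$-homogeneity identity $tU_t+zU_z=\alpha U$,
$$U\bigl((1+s)t,z\bigr)-(1+s)^\alpha U(t,z)= s\bigl(tU_t-\alpha U\bigr)+O(s^2)=-s\,zU_z(t,z)+O(s^2),$$
and $zU_z$ is a nonzero $\alpha$-homogeneous function of $(t,z)$, so the right-hand side is of size $s\,r^\alpha$, \emph{not} $o(r^\alpha)$. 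In other words $v$ is not of the form $aU\bigl((x-x_0)\cdot\nu,z\bigr)+o(r^\alpha)$ at all, so it fails Definition \ref{defsub}(ii) wherever $\nabla\psi\neq 0$, and in particular the asserted coefficient $a=(1+\tau\eps\partial_n\psi)^\alpha>1$ does not arise. Conversely, in $B_{1/16}$ where $\psi\equiv 1$, $v$ is an exact translate of $U$, hence a solution with $a=1$ and nonstrict equation, which violates Definition \ref{defsub}(iii) there. So the object you propose is not a strict comparison subsolution anywhere near its free boundary.

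A second independent problem is the interior computation. A direct calculation (using $\Phi(X)=X+\tau\eps\psi(X)e_n$, $v=U\circ\Phi$, and $\Phi_z=z$) gives
$$\Delta v+\beta\frac{v_z}{z}=\tau\eps\Bigl[\,2\,\nabla U_n(\Phi)\cdot\nabla\psi+U_n(\Phi)\bigl(\Delta\psi+\beta\tfrac{\psi_z}{z}\bigr)\Bigr]+(\tau\eps)^2U_{nn}(\Phi)|\nabla\psi|^2,$$
so your identity drops the cross term $2\tau\eps\,\nabla U_n\cdot\nabla\psi$. This term is of the \emph{same} order in $\tau\eps$ as the term you keep, and near $F(v)$ it actually dominates ($\nabla U_n\sim r^{\alpha-2}$ versus $U_n\sim r^{\alpha-1}$) and has no definite sign. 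It cannot be absorbed into ``$O((\tau\eps)^2)$''. Finally, a smaller gap: a Harnack chain in $\{|z|\ge 1/32\}$ only gives $g-U\ge c_1\eps$ on a collar away from the thin set, whereas the comparison step needs $g\ge(1+c'\eps)U$ on all of $\partial B_{1/4}$, including arbitrarily close to $\{z=0\}$; obtaining this requires the boundary Harnack argument of Lemma \ref{basic}, not just interior Harnack.

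The paper avoids all of this: it first invokes Lemma \ref{basic} (interior plus boundary Harnack) to upgrade the pointwise gap to $g\ge(1+c'\eps)U$ in $\overline B_{1/4}$, and then slides the explicit radial competitor $v_R$ from Proposition \ref{sub} with $R\sim 1/\eps$. The specific form $V_R=U\bigl(1+(n-1)t/R\bigr)$ is engineered precisely so that (a) the cross term conspires to give a strict interior inequality (reducing to $R\gtrsim t+r$, using $U_t=\alpha U/r$), and (b) the free boundary expansion holds with $a=1$ and error $O(s^{2\alpha})=o(s^\alpha)$, so condition (iii) holds through the strict equation. If you want a ``bump'' deformation of $U$ rather than the radial $v_R$, the perturbation must be chosen so that both the interior cross term and the FB expansion come out right; a generic $U(X+\tau\eps\psi(X)e_n)$ does not.
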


A preliminary basic result is the following.\begin{lem}\label{basic} Let $g\geq 0$ be $C_{llc}^{1,1}$ in $B_2^+(g)$ and solve  \eqref{geq} in $B_2 \setminus \{z=0\}$ and let $\bar X = \frac 3 2 e_n.$ Assume that 
$$g \geq U \quad \text{in $B_2$}, \quad g(\bar X) - U(\bar X) \geq \delta_0$$ for some $\delta_0>0$, then
\begin{equation}\label{gU}g \geq (1+c\delta_0) U \quad \text{in $B_{1}$}\end{equation} for a small universal constant $c$.

In particular, for any $0 < \eps < 2$  
\begin{equation}\label{cor1}U(X + \eps e_n) \geq (1+c\eps)U(X) \quad \text{in $B_1$},\end{equation} with $c$ small universal.
\end{lem}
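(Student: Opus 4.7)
The plan is to analyze the nonnegative difference $v := g - U$. By Remark \ref{c11}, both $g$ and $U$ satisfy $\text{div}(|z|^\beta \nabla \cdot)=0$ distributionally in $B_2 \setminus P$, where $P := \{x_n \leq 0,\, z=0\}$, so $v$ does as well; also $v \geq 0$ in $B_2$ and $v(\bar X) \geq \delta_0$. Since \eqref{gU} is equivalent to $v \geq c\delta_0\, U$ in $B_1$, the task is to promote the single pointwise lower bound on $v$ at $\bar X$ into a multiplicative lower bound in terms of $U$.

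First I would apply the Harnack inequality for the $A_2$ weight $|z|^\beta$ iteratively along a chain of overlapping balls contained in $B_2 \setminus P$ to obtain $v \geq c_1 \delta_0$ on any compact subset of $B_2 \setminus P$ at a definite distance from $P$; in particular on the ``safe'' set $K := \partial B_{3/2} \cap \{\mathrm{dist}(\cdot, P) \geq \rho_0\}$ for some small universal $\rho_0 > 0$. Next I would introduce a barrier $\phi$, defined as the bounded distributional solution of $\text{div}(|z|^\beta \nabla \phi)=0$ in $B_{3/2} \setminus P$ vanishing continuously on $P \cap B_{3/2}$ with boundary data $\phi = c_1 \delta_0\, \chi_K$ on $\partial B_{3/2}$. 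By construction and the previous step, $\phi \leq v$ on $\partial(B_{3/2}\setminus P) = \partial B_{3/2} \cup (P \cap B_{3/2})$, and the comparison principle (available in view of Remark \ref{c11}) yields $\phi \leq v$ throughout $B_{3/2}\setminus P$. Finally, $\phi$ and $U$ are both positive distributional solutions in $B_{3/2} \setminus P$ vanishing continuously on $P \cap B_{3/2}$, so Theorem \ref{bhi} gives that the ratio $\phi/U$ is comparable in $B_1$; combined with a reference lower bound $\phi(Y_0)/U(Y_0) \geq c_2 \delta_0$ at an interior point $Y_0$ away from $P$ (obtained via interior Harnack from the bulk boundary data of $\phi$), this gives $\phi \geq c_3 \delta_0\, U$ in $B_1$, and hence $v \geq \phi \geq c_3 \delta_0\, U$ in $B_1$, which is \eqref{gU}.

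For the corollary \eqref{cor1}, I would apply \eqref{gU} to $g(X) := U(X + \eps e_n)$. Since $U_n = \alpha U/r > 0$ on $\{U>0\}$ (as computed in Subsection 2.2), $U$ is strictly monotone increasing in $e_n$, so $g \geq U$ in $B_2$; moreover at $\bar X = \frac{3}{2} e_n$ the mean value theorem applied to $t \mapsto t^\alpha$ gives $g(\bar X) - U(\bar X) = (3/2+\eps)^\alpha - (3/2)^\alpha \geq c_0 \eps$ for all $0<\eps<2$ with $c_0$ universal (this is the minimum of the decreasing quotient $((3/2+\eps)^\alpha - (3/2)^\alpha)/\eps$ on $(0,2)$, attained at $\eps = 2$). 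Inserting $\delta_0 := c_0 \eps$ into \eqref{gU} yields \eqref{cor1}. The main technical obstacle is the barrier step: one cannot directly apply boundary Harnack to the pair $(v, U)$ because $v$ need not vanish on $P$ (no hypothesis forces $g$ to vanish on $\{x_n \leq 0,\, z=0\}$). The auxiliary function $\phi$ is introduced precisely to reduce to a pair of positive solutions that both vanish on $P$, and calibrating its support and boundary data so that simultaneously $\phi \leq v$ (from the maximum principle) and $\phi \gtrsim \delta_0 U$ (from Theorem \ref{bhi}) is the delicate balance at the heart of the proof.
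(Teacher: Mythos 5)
Your proof is correct and carries out exactly the argument the paper defers to \cite{DR} (Lemma 5.1), using precisely the four ingredients the paper names: maximum principle, Harnack inequality, boundary Harnack inequality, and monotonicity of $U$ in $e_n$. You correctly identify the one genuine subtlety---that $v := g-U \ge 0$ solves $\text{div}(|z|^\beta \nabla v)=0$ in $B_2 \setminus P$ (since $g \ge U>0$ on $\{x_n>0,z=0\}$) but need not vanish on $P$---and the auxiliary minorant $\phi$, which vanishes on $P$ and sits below $v$ by the comparison principle, is exactly the right device to make Theorem \ref{bhi} applicable; the lower bound $\phi(Y_0)\gtrsim\delta_0$ at a fixed interior point is cleanest phrased as a normalization ($\phi/(c_1\delta_0)$ is a fixed positive solution in a fixed slit domain, hence bounded below at $Y_0$ by the strong maximum principle) and the passage from $\phi(Y_0)/U(Y_0)\gtrsim\delta_0$ to $\phi\gtrsim\delta_0 U$ in $B_1$ uses the two-sided comparability form of boundary Harnack rather than just the H\"older seminorm bound literally stated in Theorem \ref{bhi}, but both are standard in the $A_2$-weight setting and these are presentation points, not gaps.
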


Its proof can be found in \cite{DR} (Lemma 5.1.) It remains valid since Maximum principle, Harnack Inequality, Boundary Harnack Inequality, and monotonicity of $U$ in the $e_n$-direction,  which are all the ingredients of the proof, are still valid. Harmonic functions in that proof are replaced by solutions to 
\begin{equation}\label{div} \text{div}(|z|^\beta \nabla g)=0.\end{equation}

The main tool in the proof of Lemma \ref{babyH}  will be the following family of radial subsolutions.
Let $R>0$ and denote by $$V_R(t,z) = U(t,z)((n-1)\frac{t}{R} + 1 ). $$ Then set
\begin{equation}v_R(X)= V_R(R- \sqrt{|x'|^2+(x_n-R)^2}, z),\end{equation}
that is we obtain the $n+1$-dimensional function $v_R$ by rotating the 2-dimensional function $V_R$ around  $(0,R,z).$ \begin{prop}\label{sub} If $R$ is large enough, the function $v_R(X)$ is a comparison subsolution to \eqref{FB} in $B_2$ which is strictly monotone increasing in the $e_n$-direction in $B_2^+(v_R)$. Moreover, there exists a function $\tilde v_R$ such that 
\begin{equation}\label{eq}
U(X) = v_R(X - \tilde v_R(X)e_n) \quad \text{in $B_1\setminus P,$}
\end{equation}
and
\begin{equation}\label{estvr}
|\tilde v_R(X) - \gamma_R(X)| \leq \frac{C}{R^2} |X|^2, \quad \gamma_R(X)=- \frac{|x'|^2}{2R} + 2(n-1)\frac{x_n r}{R},
\end{equation} with $r= \sqrt{x_n^2+z^2}$ and $C$ universal.

\begin{proof} We divide the proof of this proposition in two steps.

{\bf Step 1. } In this step we show that $v_R$ is a comparison subsolution in $B_2$ which is monotone in the $e_n$-direction. 

First we see that $v_R$ is a strict subsolution to \eqref{div} in $B_2 \setminus \{z=0\}$. One can easily compute that on such set,

$$\Delta v_R (X) +\beta \frac{(v_R)z(X)}{z}= \Delta_{t,z}V_R(R-\rho, z) - \frac{n-1}{\rho}\p_t V_R(R-\rho, z) + \beta \frac{\p_zV_R (R-\rho,z)}{z}, $$
where for simplicity we call $$\rho :=  \sqrt{|x'|^2+(x_n-R)^2}.$$ 
Also for $(t,z)$ outside the set  $\{(t,0) : t \leq 0\} $
\begin{align*}\Delta_{t,z} V_R (t,z) +& \beta \frac{(V_R)_z(t,z)}{z}= (\p_{tt}+ \p_{zz})V_R(t,z) + \beta \frac{(V_R)_z(t,z)}{z}\\& = \frac{2(n-1)}{R} \p_t U(t,z)+ (1+(n-1)\frac{t}{R})(\Delta_{t,z} U(t,z)+\beta \frac{U_z(t,z)}{z})\\
&= \frac{2(n-1)}{R} \p_t U(t,z),\end{align*}
and 
\begin{equation}\label{pV}\p_t V_R(t,z) = (1+(n-1)\frac{t}{R})\p_t U(t,z) + \frac{n-1}{R} U(t,z).\end{equation}
Thus to show that $v_R$ solves \eqref{div} in $B_2 \setminus \{z=0\}$ we need to prove that in such set
$$\frac{2(n-1)}{R} \p_t U - \frac{n-1}{\rho}[(1+(n-1)\frac{R-\rho}{R})\p_t U + \frac{n-1}{R} U] \geq 0,$$
where $U$ and $\p_t U$ are evaluated at $(R-\rho,z).$ 

Set $ t = R -\rho$, then straightforward computations reduce the inequality above to
$$(n-1)[2(R- t) - R - (n-1)^2 t]\p_t U(t, z) - (n-1)^2U(t, z) \geq 0.$$
Using that $\p_t U( t, z)= \alpha U( t,z)/ r$ with $r^2= t^2 + z^2$, this inequality becomes
$$R \geq 2 t + (n-1)^2 t + \frac{(n-1)}{\alpha} r.$$
This last inequality is easily satisfied for $R$ large enough, since $t, r \leq 3.$

Now we prove that $v_R$ satisfies the free boundary condition in Definition \ref{defsub}.
First observe that $$F(v_R) = \p \mathcal{B}_R(Re_n,0) \cap \mathcal{B}_2,$$ 
and hence it is smooth. By the radial symmetry it is enough to show that the free boundary condition is satisfied at $0 \in F(v_R)$ that is
\begin{equation}\label{expa}v_R(x,z) = a U(x_n,z) + o(|(x,z)|^{\alpha}), \quad \text{as $(x,z) \rightarrow (0,0),$}\end{equation}
with $a \geq 1.$

First notice since $U$ is Holder continuous with exponent $\alpha$, it follows from the formula for $V_R$ that
$$|V_R(t,z) - V_R(t_0,z)| \leq C |t-t_0|^{\alpha} \quad \text{for $|t-t_0| \leq 1.$}$$
Thus for $(x,z) \in B_s,$ $s$ small
$$|v_R(x,z) - V_R(x_n,z)| = |V_R(R-\rho, z) - V_R(x_n, z)| \leq C |R-\rho - x_n|^{\alpha} \leq C s^{2\alpha},$$
where we have used that (recall that $\rho :=  \sqrt{|x'|^2+(x_n-R)^2}$) 
\begin{equation}\label{easy}R - \rho - x_n = - \frac{|x'|^2}{R-x_n + \rho}.\end{equation}
It follows that for $(x,z) \in B_s$
\begin{align*}|v_R(x,z) - U(x_n,z)| &\leq |v_R(x,z) - V_R(x_n, z)| + |V_R(x_n,z) - U(x_n, z)|\\ & \leq Cs^{2\alpha} + |V_R(x_n,z) - U(x_n, z)|.\end{align*} Thus from the formula for $V_R$
$$|v_R(x,z) - U(x_n,z)| \leq Cs^{2\alpha} +(n-1) \frac{|x_n|}{R}U(x_n,z) \leq C' s^{2\alpha}, \quad (x,z) \in B_s$$
which gives the desired expansion \eqref{expa} with $a=1.$

Now, we show that $v_R$ is strictly monotone increasing in the $e_n$-direction in $B_2^+(v_R)$. Outside of its zero plate,
$$\p _{x_n} v_R(x) = - \frac{x_n-R}{\rho} \p_t V_R(R-\rho, z).$$ Thus we only need to show that $V_R(t,z)$ is strictly monotone increasing in $t$ outside $\{(t,0) : t \leq 0\}$ . This follows immediately from \eqref{pV} and the formula for $U$.

\

{\bf Step 2.} In this step we state the existence of $\tilde v_R$ satisfying \eqref{eq} and \eqref{estvr}. Since we have a precise formula for $v_R$ in terms of $U$, this is only a matter of straightforward (though tedious) computations which are carried on in \cite{DR}. 
Also, one needs to use Boundary Harnack inequality for $U$ and its derivatives, the fact that $U$ is homogeneous of degree $\alpha$ and that the ratio $U_t/U= \alpha / r$ (with $\alpha=1/2$ in \cite{DR}.) All these are still valid in this context.  

\end{proof}

\end{prop}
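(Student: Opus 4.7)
The plan is to handle the subsolution property, the monotonicity, and the implicit construction of $\tilde v_R$ separately, exploiting at every step the rotational symmetry of $v_R$ about the axis $\{x'=0,\, x_n=R\}$ and the factorization $V_R = U \cdot F$ with $F(t) = 1 + (n-1)t/R$.

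For the interior inequality $\text{div}(|z|^\beta \nabla v_R) \ge 0$ in $B_2 \setminus \{z=0\}$, I would exploit that $v_R$ depends on $x$ only through the radial variable $\tilde\rho = \sqrt{|x'|^2 + (x_n-R)^2}$. A direct chain-rule computation yields
\[
\Delta v_R + \beta\,(v_R)_z/z \;=\; \bigl(\Delta_{t,z} V_R + \beta (V_R)_z/z\bigr)\big|_{(R-\tilde\rho,z)} \;-\; \frac{n-1}{\tilde\rho}\,\partial_t V_R(R-\tilde\rho,z).
\]
Because $U$ solves its own degenerate equation and $F$ depends only on $t$, the product rule collapses the first bracket to $\tfrac{2(n-1)}{R}\,U_t$. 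Plugging in $\partial_t U = \alpha U/r$ and $t = R - \tilde\rho$, the claim reduces to a pointwise bound of the schematic form $R \ge c_1(n,\alpha)\,t + c_2(n,\alpha)\,r$, which is strict on $B_2$ for $R$ large enough since $|t|,|r| \le 3$ there. For the free boundary condition, I would use that $F(v_R) = \partial\mathcal{B}_R(Re_n) \cap \mathcal{B}_2$ is smooth and, by rotational symmetry, only check the expansion at $0 \in F(v_R)$. A Taylor expansion gives $R - \tilde\rho = x_n - |x'|^2/(2R) + O(|x'|^4/R^3)$; combined with $C^\alpha$-regularity of $U$ and the identity $V_R - U = (n-1)tU/R$, this yields $v_R(X) = U(x_n,z) + o(|X|^\alpha)$, so the coefficient in Definition \ref{defsub} is $a = 1$. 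Since the interior inequality is strict, condition (iii) of that definition is satisfied.

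Strict monotonicity follows from $\partial_{x_n} v_R = -\tfrac{x_n - R}{\tilde\rho}\,\partial_t V_R(R-\tilde\rho,z)$: on $B_2$ we have $x_n < R$, and $\partial_t V_R = F U_t + U(n-1)/R > 0$ wherever $U > 0$, so this derivative is strictly positive throughout $B_2^+(v_R)$. Existence of $\tilde v_R$ then follows by inverting the defining relation $U(X) = v_R(X - w\, e_n)$ in $w$ via the implicit function theorem, which is licit away from $P$ since $v_R$ is smooth and strictly monotone in $e_n$ there. To derive the estimate, I would expand $\rho_w := \sqrt{|x'|^2 + (R + w - x_n)^2} = R - x_n + w + |x'|^2/(2R) + O(|X|^3/R^2)$, invert the scalar equation $V_R(s,z) = U(x_n,z)$ using $V_R = UF$ together with $\partial_t U/U = \alpha/r$ to obtain $s - x_n$ modulo errors of order $|X|^2/R^2$, and then equate $s = R - \rho_w$ to solve for $w = \tilde v_R$ and match the leading term $\gamma_R$.

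The main obstacle I anticipate is the quantitative control of the remainder in this last step. The leading correction $\gamma_R$ is of size $O(|X|/R)$, and one needs the expansion to produce a remainder of size $|X|^2/R^2$ with a universal constant; this depends on the homogeneity of $U$ of degree $\alpha$, on the identity $U_t/U = \alpha/r$, and on boundary Harnack bounds for $U_t/U$ and analogous quotients on compact subsets away from $\{r = 0\}$, all of which are collected in Section 2. Once these pointwise estimates are assembled, inverting the implicit equation is mechanical but computation-heavy, and I would expect to cite \cite{DR} for the explicit bookkeeping since the same algebraic structure appeared in the $\alpha = 1/2$ case.
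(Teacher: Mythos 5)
Your proposal follows essentially the same route as the paper: the chain-rule reduction to a scalar inequality of the form $R \ge c_1 t + c_2 r$ via $U_t = \alpha U/r$, the verification of the free boundary coefficient $a=1$ at the origin by expanding $R - \rho$ and using the H\"older regularity of $U$ together with $V_R - U = (n-1)tU/R$, the monotonicity via the sign of $\partial_{x_n} v_R$, and finally a deferral to \cite{DR} for the explicit estimate on $\tilde v_R$. The only cosmetic difference is that you make the inversion step (implicit function theorem plus Taylor expansion of $\rho_w$) a bit more explicit, while the paper simply asserts that the computation is carried out in \cite{DR}; both rely on the same ingredients (homogeneity of $U$, the identity $U_t/U = \alpha/r$, and boundary Harnack).
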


Then, one easily obtain the following Corollary.

\begin{cor}\label{corest}There exist $\delta, c_0,C_0, C_1$ universal constants, such that 
\begin{equation}
\label{2} v_R(X+ \frac{c_0}{R}e_n) \leq (1+\frac{C_0}{R}) U(X), \quad \text{in $\overline{B}_1 \setminus B_{1/4}$},\end{equation} with strict inequality on $F(v_R(X+ \frac{c_0}{R} e_n)) \cap  \overline{B}_1 \setminus B_{1/4},$
\begin{align}
\label{4}& v_R(X + \frac{c_0}{R}e_n) \geq U(X + \frac{c_0}{2R} e_n), \quad \text{in $B_{\delta},$}\\
\label{3}& v_R(X - \frac{C_1}{R} e_n) \leq U(X), \quad \text{in $\overline{B}_1.$}
\end{align}
\end{cor}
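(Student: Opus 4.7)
The plan is to invert the relation $U(X) = v_R(X - \tilde v_R(X)e_n)$ of Proposition~\ref{sub} and then reduce each of \eqref{2}, \eqref{4}, \eqref{3} to a scalar-shift comparison using the monotonicity of $U$ in $e_n$. Since $v_R$ is strictly increasing in $e_n$, that identity inverts to $v_R(Y) = U(Y + \phi(Y)e_n)$, where $\phi$ solves the fixed-point relation $\phi(Y) = \tilde v_R(Y + \phi(Y)e_n)$. Combining \eqref{estvr} with the Lipschitz character and the $O(1/R)$ size of $\gamma_R$ on $\overline{B}_1$, a standard contraction argument yields $|\phi(Y) - \gamma_R(Y)| \leq C/R^2$ and in particular $|\phi| \leq C/R$ on the relevant domain.

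With this setup, \eqref{3} and \eqref{4} are immediate. For \eqref{3}, choose $C_1$ universal so that $-C_1/R + \phi \leq 0$ on $\overline{B}_1$, which is possible since $|\phi| \leq C/R$; monotonicity of $U$ in $e_n$ then gives $v_R(X - (C_1/R)e_n) = U(X - (C_1/R)e_n + \phi\, e_n) \leq U(X)$. For \eqref{4}, using the quadratic vanishing $\gamma_R(0)=0$, one has $|\gamma_R(X + (c_0/R)e_n)| \leq c_0/(4R)$ for $X \in B_\delta$ once $\delta$ is chosen small (depending on $c_0$) and $R$ large; combined with $|\phi - \gamma_R| \leq C/R^2$, this gives $c_0/R + \phi \geq c_0/(2R)$, and monotonicity of $U$ again concludes.

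The main obstacle is \eqref{2}, whose multiplicative factor requires a sharper analysis. Setting $s(X) := c_0/R + \phi(X + (c_0/R)e_n)$, so that $v_R(X + (c_0/R)e_n) = U(X + s(X) e_n)$, and using the explicit formula $U(t,z) = ((r+t)/2)^\alpha$ with $r = \sqrt{t^2+z^2}$, I would split $\overline{B}_1 \setminus B_{1/4}$ into two regions. In the bulk region $\{r_0 + x_n \geq \eta\}$ for a small universal $\eta$, the ratio $U(X+s e_n)/U(X) = ((r_s + x_n + s)/(r_0 + x_n))^\alpha$ is bounded by $(1 + 2|s|/\eta)^\alpha \leq 1 + C_0/R$, using $|r_s - r_0| \leq |s| \leq C/R$. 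In the complementary near-zero region $\{r_0 + x_n < \eta\}$, the constraint $|X| \geq 1/4$ forces either $|x'|^2 \geq 1/32$ or $|x_n|$ to be bounded away from $0$; a short case analysis then shows that the dominant contribution to $\gamma_R(Y)$ is negative of size at least $C/R$, so for $c_0$ chosen small (universal) the shift $s$ is $\leq 0$, and monotonicity of $U$ in $e_n$ yields $U(X + s e_n) \leq U(X)$. Finally, the strict inequality on $F(v_R(\cdot + (c_0/R)e_n)) \cap (\overline{B}_1 \setminus B_{1/4})$ follows because on this set $z=0$ and the spherical equation $|x'|^2 + (x_n + c_0/R)^2 = 2R(x_n + c_0/R)$, combined with $|X| \geq 1/4$ and $c_0$ small, forces $x_n > 0$, placing $X$ in the open positive set of $U$ where $U(X) > 0$ while $v_R(X + (c_0/R)e_n) = 0$.
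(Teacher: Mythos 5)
Your proof is correct and is essentially the intended argument: the paper states Corollary~\ref{corest} as a direct consequence of Proposition~\ref{sub} (deferring computations to \cite{DR}), and your inversion $v_R(Y)=U(Y+\phi(Y)e_n)$, the bound $|\phi-\gamma_R|\le C/R^2$, the scalar-shift comparisons for \eqref{3} and \eqref{4}, and the two-region split using $U(t,z)=((r+t)/2)^\alpha$ for \eqref{2} together with the spherical description of $F(v_R)$ for the strict inequality are exactly the expected details. One small remark: the ``standard contraction argument'' is not needed and would require a Lipschitz bound on $\tilde v_R$ that the paper does not record; instead, $|\phi|\le C/R$ follows immediately from $\phi(Y)=\tilde v_R(Y+\phi(Y)e_n)$ and $|\tilde v_R|\le C/R$, and then $|\phi(Y)-\gamma_R(Y)|\le |\tilde v_R(Y+\phi(Y)e_n)-\gamma_R(Y+\phi(Y)e_n)| + |\gamma_R(Y+\phi(Y)e_n)-\gamma_R(Y)| \le C/R^2$ using \eqref{estvr} and the $O(1/R)$ Lipschitz constant of $\gamma_R$.
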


\

We are now ready to present the proof of Lemma \ref{babyH}.

\

\textit{Proof of Lemma $\ref{babyH}.$} We prove the first statement.
In view of \eqref{Bound}
$$g(\bar X) - U(\bar X) \geq U(\bar X+\eps e_n) - U(\bar X) = \p_tU(\bar X+ \lambda e_n) \eps \geq c\eps, \quad \lambda \in (0,\eps).$$ From Lemma \ref{basic} we then get\begin{equation} \label{gU2}g(X) \geq (1+ c'\eps) U(X) \quad \text{in $\overline{B}_{1/4}.$}\end{equation} Now let 
$$R= \frac{C_0}{c'\eps},$$
where from now on the $C_i, c_i$ are the constants in Corollary  \ref{corest}. Then, for $\eps$ small enough $v_R$ is a subsolution to \eqref{FB} in $B_2$ which is monotone increasing in the $e_n$- direction and it also satisfies \eqref{2}--\eqref{3}. 
We now wish to apply the Comparison Principle as stated in Corollary \ref{compmon}. Let
$$v_R^t(X) = v_R(X+t e_n), \quad X \in B_1,$$ then according to \eqref{3}, $$v_R^{t_0} \leq U \leq g \quad \text{in $\overline{B}_{1/4}$, with $t_0=-C_1/R.$}$$ Moreover, from \eqref{2} and \eqref{gU2} we get that for our choice of $R$,
$$v_R^{t_1} \leq (1+c'\eps) U \leq g \quad \text{on $\p B_{1/4}$, with $t_1= c_0/R,$}$$ with strict inequality on $F(v_R^{t_1}) \cap \p B_{1/4}.$ 
In particular 
$$g > 0 \quad \text{on $\mathcal{F}(v_R^{t_1})$ in $\p B_{1/4}$}.$$ Thus we can apply  Corollary \ref{compmon} in the ball $B_{1/4}$ to obtain 
$$v_R^{t_1} \leq g \quad \text{in $B_{1/4}$.}$$
From \eqref{4} we have that 
$$U(X+\frac{c_1}{R}e_n) \leq v_R^{t_1}(X) \leq g (X) \quad \text{on $B_{\delta}$}$$ which is the desired claim \eqref{onesideimprov} with $\tau= \frac{c_1 c'}{C_0}$. 
\qed

\section{Improvement of flatness.}

In this section we state the improvement of flatness property for solutions to \eqref{FB} and we provide its proof.  Our main Theorem \ref{mainT} follows from the Theorem below and Lemma \ref{differentassumption}.
\begin{thm}[Improvement of flatness]\label{iflat}There exist $\bar \eps > 0$ and $\rho>0$ universal constants such that for all  $0 < \eps \leq \bar \eps$ if $g$ solves \eqref{FB}  with $0 \in F(g)$ and it satisfies 
\begin{equation}\label{flatimp}U(X - \eps e_n) \leq g(X) \leq U(X+\eps e_n) \quad \textrm{in $B_1$,}\end{equation} then 
\begin{equation}\label{flatimp2}U(x \cdot \nu  - \frac \eps 2 \rho, z) \leq g(X) \leq U(x\cdot \nu+\frac \eps 2 \rho, z) \quad \textrm{in $B_\rho$},\end{equation} for some direction $\nu \in \R^n, |\nu|=1.$ 
\end{thm}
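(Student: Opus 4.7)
\textit{Proof proposal.} The plan is to run the standard compactness/linearization argument in the spirit of Caffarelli's original improvement of flatness scheme, as carried out for the $\alpha=1/2$ case in \cite{DR}. Suppose the conclusion fails: then there is a sequence of solutions $g_k$ to \eqref{FB} with $0\in F(g_k)$ and flatness parameters $\eps_k\to 0$ such that \eqref{flatimp} holds with $\eps_k$, but \eqref{flatimp2} fails for every unit vector $\nu$. To each $g_k$ associate its $\eps_k$-domain variation $\tilde g_k$ on $B_{1-\eps_k}\setminus P$, as defined by \eqref{til}, taking values in $[-1,1]$.

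The first main step is a compactness claim: the graphs of $\tilde g_k$ converge (in the Hausdorff sense, up to a subsequence) to the graph of a continuous function $w:B_{1/2}\setminus P\to[-1,1]$. This is exactly the content of Corollary \ref{corHI}: the Harnack inequality Theorem \ref{mainH} applied iteratively gives that $\tilde g_k$ is trapped between two functions $a_{\eps_k},b_{\eps_k}$ whose oscillation decays geometrically and which are H\"older continuous with a universal exponent. Extract a uniformly convergent subsequence $\tilde g_k\to w$ on compact subsets of $B_{1/2}\setminus P$, with $w$ H\"older and $w(0)=0$.

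The heart of the argument is to show that $w$ is a viscosity solution of the linearized problem \eqref{linear} in the sense of Definition \ref{linearsol}. For the interior equation on $B_{1/2}\setminus\{z=0\}$: differentiate the identity $U(X)=g_k(X-\eps_k \tilde g_k(X)e_n)$ formally and note that, away from $L$, $g_k\to U$ and, to first order in $\eps_k$, the equation $\mathrm{div}(|z|^\beta\nabla g_k)=0$ forces $\mathrm{div}(|z|^\beta\nabla(U_n w))=0$ on $B_{1/2}\setminus\{z=0\}$; this can be made rigorous in the viscosity sense by the usual barrier comparison with $U_n^{-1}\phi$ for a smooth test $\phi$. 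For the ``free boundary'' condition on $L$, one uses the radial subsolutions $v_R$ of Proposition \ref{sub}. Fix a test function $\phi$ as in Definition \ref{linearsol} touching $w$ strictly from below at $X_0\in B_{1/2}\cap L$ with $b(X_0)>0$. Choose $R$ large but finite and translate $v_R$ so that the expansion \eqref{estvr}, $\tilde v_R(X)\approx -|x'|^2/(2R)+2(n-1)x_nr/R$, matches (after adding an affine $x'$-piece and a small vertical shift $c/\eps_k$) the shape $\phi(X_0)+a(X_0)\cdot(x'-x_0')+b(X_0)r$ at second order. For $k$ large, Lemma \ref{linearcomp} applied to this tilted-and-shifted version of $v_R$ yields that it touches $\tilde g_k$ from below somewhere in $B_{3/2}\setminus B_{1/2}$, contradicting the fact that it stays strictly below on the outer annulus. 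An analogous argument with $v_R$ used from above handles $b(X_0)<0$.

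Once $w$ solves the linearized problem with $|w|\le 1$ and $w(0)=0$, Corollary \ref{LIF} provides a vector $a_0\in\R^{n-1}$ and a universal $\rho>0$ with
\[
a_0\cdot x'-\tfrac{1}{8}\rho \le w(X)\le a_0\cdot x'+\tfrac{1}{8}\rho\quad\text{in } B_{2\rho}.
\]
For $k$ large enough the same two-sided bound holds for $\tilde g_k$ with $\tfrac14\rho$ in place of $\tfrac18\rho$, on $B_{2\rho}\setminus P$. Unwinding the definition \eqref{til} and using that $U_n/U=\alpha/r$ together with $|\eps_k a_0|\ll 1$, a direct computation shows that
\[
U\!\left(x\cdot\nu_k - \tfrac{\eps_k}{2}\rho,\, z\right)\le g_k(X)\le U\!\left(x\cdot\nu_k + \tfrac{\eps_k}{2}\rho,\, z\right)\quad\text{in } B_\rho,
\]
with $\nu_k=(\eps_k a_0,1)/|(\eps_k a_0,1)|$, contradicting the assumed failure of \eqref{flatimp2}.

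The main obstacle I expect is the rigorous verification that the limit $w$ satisfies the free boundary condition of the linearized problem, because it requires constructing the right family of radial subsolutions (controlled by \eqref{estvr}) and executing a delicate first-order matching of $\phi$ at the touching point. The interior equation and the translation step from $\tilde g_k$ back to $g_k$ are essentially bookkeeping once the compactness and the linearization are in place.
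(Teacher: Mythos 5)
Your proposal follows the same compactness/linearization route as the paper: contradiction, $\eps_k$-domain variations, uniform convergence via the Harnack corollary, identification of the limit as a solution of the linearized problem, then Corollary~\ref{LIF} and Lemma~\ref{seclem} (your ``direct computation'') to translate back. The structure and the supporting lemmas you cite are exactly the ones the paper uses, so the plan is sound.

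One caveat on the part you rightly flag as delicate. In the verification of the boundary condition on $L$, the comparison family cannot be $v_R$ with a \emph{fixed} large $R$: the $\eps_k$-domain variation of the translated $v_R$ is $\eps_k^{-1}\tilde v_R$, which for fixed $R$ blows up as $\eps_k\to 0$. The paper takes $R=R_k=1/(\alpha\eps_k)$, so that $\tilde w_k$ is $O(1)$ and converges to the explicit polynomial $q(X)=\phi(X_0)-\tfrac{\alpha}{2}|x'-y_0'|^2+2\alpha(n-1)x_n r$; the affine $x'$-adjustment is achieved by shifting the center $y_0'$, not by adding an $x'$-linear piece to $\tilde v_R$ (which would risk destroying the subsolution property of the corresponding $w_k$), and the vertical shift in physical space is $\eps_k\phi(X_0)$, not $c/\eps_k$. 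Moreover $q$ is not a second-order match of $\phi$: it sits strictly below with a gap $\gtrsim b(X_0)r$, which is what supplies the positive margin $\delta$ on $N_{\bar r}\setminus N_{\bar r/2}$. The contradiction with Lemma~\ref{linearcomp} is then one of consistency --- the margin $\delta/2$ propagates from the annulus into all of $N_{\bar r}\setminus P$, contradicting the near-coincidence $\tilde g_k(X_k)-\tilde w_k(X_k)\le\delta/4$ at points $X_k\to X_0$ --- rather than a statement about touching somewhere in the annulus. These are corrections to the sketch, not to the approach itself.
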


The proof of Theorem \ref{iflat} is divided into the next four lemmas. 

The following Lemma is contained in \cite{DR} (Lemma 7.2) and its proof remains unchanged, since it does not depend on the particular equation satisfied by $g$ but only on elementary considerations related to the definition of $\tilde g_\eps$.

\begin{lem} \label{seclem}Let $g$ be a solution to \eqref{FB} with $0\in F(g)$ and satisfying \eqref{flatimp}. Assume that the corresponding  $\tilde g_\eps$ satisfies 
\begin{equation}\label{bound}  a_0 \cdot x' - \frac{1}{4} \rho\leq  \tilde g_\eps(X) \leq a_0 \cdot x' + \frac{1}{4} \rho \quad \text{in $B_{2\rho} \setminus P,$}\end{equation} for some $a_0 \in \R^{n-1}$. Then if $\eps \leq \bar \eps (a_0,\rho)$  $g$ satisfies \eqref{flatimp2} in $B_\rho$.
\end{lem}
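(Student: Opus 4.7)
The strategy is to interpret the bound on $\tilde g_\eps$ as a two-sided sandwich on $g$ itself, and then to recognize the resulting barriers as one-dimensional solutions along a slightly tilted direction $\nu$ obtained by normalizing the vector $(\eps a_0, 1)$.

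I would begin by introducing the Lipschitz barriers $\tilde\varphi_\pm(X) := a_0 \cdot x' \pm \rho/4$, which depend only on $x'$. Because $\tilde\varphi_\pm$ is independent of $x_n$ and $z$, the shift map $X \mapsto X - \eps \tilde\varphi_\pm(X)e_n$ is a pure translation in $e_n$ at each fixed $x'$, and the function $\varphi_{\pm,\eps}$ defined through \eqref{deftilde2} can be written out explicitly as
$$\varphi_{\pm,\eps}(X) = U\bigl(x_n + \eps(a_0\cdot x' \pm \rho/4),\, z\bigr).$$
The hypothesis $\tilde\varphi_- \leq \tilde g_\eps \leq \tilde\varphi_+$ in $B_{2\rho}\setminus P$, combined with \eqref{gtildeg} applied in both directions (the reverse inequality follows from the same monotonicity of $\varphi_\eps$ in $x_n$), yields the sandwich
$$U\bigl(x_n + \eps a_0\cdot x' - \eps\rho/4,\, z\bigr) \;\leq\; g(X) \;\leq\; U\bigl(x_n + \eps a_0\cdot x' + \eps\rho/4,\, z\bigr)$$
on a slightly smaller set $B_{2\rho - C\eps}$. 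The smallness requirements $|\tilde\varphi_\pm| \leq 1$ and $\eps \cdot \mathrm{Lip}(\tilde\varphi_\pm) < 1$ are ensured by choosing $\bar\eps$ in terms of $|a_0|$ and $\rho$.

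Next I would select
$$\nu := \frac{(\eps a_0,\, 1)}{\sigma}, \qquad \sigma := \sqrt{1 + \eps^2 |a_0|^2},$$
so that $x \cdot \nu = \sigma^{-1}(x_n + \eps a_0 \cdot x')$. Because $U(\cdot,z)$ is monotone in its first argument (recall $U_t = \alpha U/r \geq 0$), the conclusion \eqref{flatimp2} will follow from the sandwich above as soon as
$$\bigl|(x_n + \eps a_0 \cdot x') - x \cdot \nu\bigr| \;=\; |1 - \sigma^{-1}| \cdot |x_n + \eps a_0\cdot x'| \;\leq\; \frac{\eps\rho}{4}$$
on $B_\rho$. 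The left-hand side is bounded by $(\sigma-1)\cdot \sigma\rho \leq \rho \eps^2 |a_0|^2$, hence is less than $\eps\rho/4$ as soon as $\eps \leq \bar\eps(|a_0|,\rho)$.

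The main (and really only) obstacle is geometric bookkeeping: one must verify that the conversion from $\tilde g_\eps$ to the sandwich on $g$, the introduction of the unit vector $\nu$, and the shrinkage of the ball of validity from $B_{2\rho - C\eps}$ to $B_\rho$ all fit together consistently. The key observation enabling the argument is that the flatness hypothesis supplies a margin of $\rho/4$ inside $\tilde g_\eps$, whereas the conclusion asks only for a margin of $\rho/2$; this factor-of-two slack precisely absorbs the $O(\eps^2|a_0|^2 \rho)$ error introduced by normalizing $(\eps a_0, 1)$ to unit length, which is what makes the whole scheme go through.
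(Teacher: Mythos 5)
Your argument is correct and takes essentially the same route as the paper, which refers to Lemma 7.2 of \cite{DR} and stresses that the proof rests only on elementary considerations from the definition of $\tilde g_\eps$: your explicit construction of $\varphi_{\pm,\eps}$ from the affine barriers $a_0\cdot x' \pm \rho/4$ via \eqref{deftilde2}--\eqref{gtildeg}, followed by normalizing $(\eps a_0,1)$ and absorbing the $O(\eps^2|a_0|^2\rho)$ renormalization error into the $\eps\rho/4$ slack between the hypothesis and \eqref{flatimp2}, is precisely that computation. The one step worth making explicit is that the upper barrier inequality $g\le\varphi_{+,\eps}$ extends from $B_{2\rho-C\eps}\setminus P$ to points of $P$ by a continuity argument in $z$ (the hypothesis on $\tilde g_\eps$ is only given off $P$, and on $P$ one cannot simply set $U(X)=0$ as in the lower-barrier case, where $\varphi_{-,\eps}\le g$ is trivial since $g\ge 0$).
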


The next lemma follows immediately from the Corollary \ref{corHI} to Harnack inequality. 

\begin{lem}\label{ginfty} Let $\eps_k \rightarrow 0$ and let $g_k$ be a sequence of solutions to \eqref{FB} with $0 \in F(g_k)$ satisfying \begin{equation}\label{flatimp_k}U(X - \eps_k e_n) \leq g_k(X) \leq U(X+\eps_k e_n) \quad \textrm{in $B_1$.}\end{equation}
Denote by  $\tilde g_k$ the  $\eps_k$-domain variation of $g_k$.
Then the sequence of sets 
$$A_k := \{(X, \tilde g_k (X)) : X \in B_{1-\eps_k} \setminus P\},$$ 
has a subsequence that converge uniformly (in Hausdorff distance) in $B_{1/2} \setminus P$ to the graph $$A_\infty := \{(X,\tilde g_\infty(X)) : X \in B_{1/2} \setminus P\},$$ where $\tilde g_\infty$ is  a Holder continuous function.
\end{lem}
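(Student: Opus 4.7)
The plan is a standard compactness argument, powered by the uniform Hölder bounds supplied by Corollary \ref{corHI}. No real obstacle is anticipated beyond bookkeeping for the possibly multivalued $\tilde g_k$; the substantive work has already been done in the iterated Harnack inequality.

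First, for each large $k$ I would choose $m_0 = m_0(k) \in \mathbb N$ satisfying
\[
2 \eps_k (1-\eta)^{m_0} \eta^{-m_0} \le \bar\eps, \qquad m_0(k) \to \infty,
\]
which is possible because $\eps_k \to 0$. Corollary \ref{corHI} applied to $g_k$ then produces single-valued functions $a_k, b_k$ on $\mathcal{B}_{1/2}$ (extended evenly in $z$ and defined on $B_{1/2}\setminus P$) such that $A_k \cap (B_{1/2} \times [-1,1])$ is trapped between their graphs, with
\[
\sup_{B_{1/2}} (b_k - a_k) \;\le\; 2(1-\eta)^{m_0(k)-1} \;\longrightarrow\; 0 \quad \text{as } k \to \infty,
\]
and with a common Hölder modulus $\alpha t^\beta$ whose exponents are independent of $k$.

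Next, since $\|a_k\|_\infty, \|b_k\|_\infty \le 1$ with uniform Hölder modulus, Arzelà--Ascoli extracts a subsequence along which $a_k$ converges uniformly on $B_{1/2}\setminus P$ to a Hölder continuous function $\tilde g_\infty$. The vanishing gap forces $b_k \to \tilde g_\infty$ uniformly along the same subsequence, and any (possibly multivalued) selection of $\tilde g_k$, lying in $[a_k,b_k]$ by definition of $A_k$, then converges uniformly to $\tilde g_\infty$ as well.

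Finally, Hausdorff convergence of $A_k$ to $A_\infty := \{(X,\tilde g_\infty(X)) : X \in B_{1/2}\setminus P\}$ is immediate: each point of $A_k$ lies in a graph strip of width $b_k - a_k = o(1)$ about $\tilde g_\infty$, while each $(X,\tilde g_\infty(X)) \in A_\infty$ is approximated by any $(X,y) \in A_k$ with $y \in [a_k(X), b_k(X)]$, a non-empty fibre for $k$ large since $X \in B_{1-\eps_k}\setminus P$. Thus the multivaluedness of $\tilde g_k$ is squeezed out in the limit and causes no difficulty, completing the proof.
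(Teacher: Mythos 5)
Your argument is correct and coincides with what the paper intends: the paper asserts Lemma \ref{ginfty} as an immediate consequence of Corollary \ref{corHI}, and your proof simply spells out that deduction (choosing $m_0(k)\to\infty$ to make the trapping gap vanish, invoking Arzel\`a--Ascoli on the uniformly H\"older functions $a_k$, and squeezing the possibly multivalued $\tilde g_k$ between $a_k$ and $b_k$ to obtain Hausdorff convergence of the graphs). One small slip: the functions $a_\eps,b_\eps$ in Corollary \ref{corHI} are defined on the $(n+1)$-dimensional ball $B_{1/2}$, not on $\mathcal B_{1/2}$, but this does not affect the argument.
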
 

From here on $\tilde g_\infty$ will denote the function from Lemma \ref{ginfty}.

\begin{lem} The limiting function satisfies $\tilde g_\infty \in C^{1,1}_{loc} (B_{1/2} \setminus P).$
\end{lem}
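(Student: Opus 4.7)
The strategy is to identify $U_n\tilde g_\infty$ as the uniform limit of the $|z|^\beta$-harmonic functions $h_k:=(g_k-U)/\eps_k$ on compact subsets of $B_{1/2}\setminus P$, and then invoke interior regularity for the weighted divergence equation together with the smoothness and strict positivity of $U_n$ away from $P$. Dividing the limit equation by $U_n$ will transfer the regularity from $v_\infty:=U_n\tilde g_\infty$ to $\tilde g_\infty$ itself.

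Fix $X_0\in B_{1/2}\setminus P$ and a ball $B_r(X_0)$ compactly contained in $B_{1/2}\setminus P$. Since $\eps_k\to 0$ and \eqref{flatimp_k} forces $F(g_k)\subset\{|x_n|\le\eps_k,\,z=0\}$, for $k$ large one has $B_r(X_0)\subset B_1^+(g_k)$, so by Remark \ref{c11} both $g_k$ and $U$ solve $\mathrm{div}(|z|^\beta\nabla\cdot)=0$ there; hence so does $h_k$. From $U(X)=g_k(X-\eps_k\tilde g_k(X)e_n)$, setting $Y:=X-\eps_k\tilde g_k(X)e_n$ (so $|X-Y|\le\eps_k$) and Taylor-expanding $U$ along the segment from $Y$ to $X$, one obtains
$$h_k(Y)\;=\;U_n(Y)\,\tilde g_k(X)+O(\eps_k)$$
uniformly on $B_{r/2}(X_0)$, since $U$ is smooth there. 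The H\"older continuity of $\tilde g_k$ from Corollary \ref{corHI} then permits replacing $\tilde g_k(X)$ by $\tilde g_k(Y)$ up to an $O(\eps_k^\gamma)$ error, yielding
$$h_k\longrightarrow U_n\,\tilde g_\infty\quad\text{uniformly on } B_{r/2}(X_0).$$

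By the variational characterization in Remark \ref{c11}, $|z|^\beta$-harmonicity is preserved under local uniform limits, so $v_\infty:=U_n\tilde g_\infty$ satisfies $\mathrm{div}(|z|^\beta\nabla v_\infty)=0$ in $B_{r/2}(X_0)$ in the distributional sense. On $\{z\ne 0\}$ the equation is uniformly elliptic with smooth coefficients, giving $v_\infty\in C^\infty$ by classical Schauder theory. At points $\{z=0,\,x_n>0\}$ in $B_{1/2}\setminus P$, where no free boundary is present, the even-in-$z$ function $v_\infty$ falls under the Fabes--Kenig--Serapioni $A_2$-weighted regularity theory \cite{F1,F2,F3}; combined with the boundary Harnack estimate of Theorem \ref{bhi} applied to tangential derivatives, this produces $C^{1,1}$ regularity through the plate. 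Since $U_n=\alpha U/r$ is smooth and strictly positive on every compact subset of $B_{1/2}\setminus P$, one concludes $\tilde g_\infty=v_\infty/U_n\in C^{1,1}_{\mathrm{loc}}(B_{1/2}\setminus P)$.

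The main obstacle is regularity across the interior plate $\{z=0,\,x_n>0\}$, where $|z|^\beta$ degenerates or blows up depending on the sign of $\beta$. In the absence of a free boundary in this region, even reflection yields a homogeneous Neumann-type condition for $v_\infty(\cdot,0)$, and the extension framework of Caffarelli--Silvestre \cite{CSi} propagates $C^{1,1}$ smoothness from the plate into the bulk, supplying the required bound.
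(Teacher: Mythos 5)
Your route is genuinely different from the paper's, and it is basically sound in outline, but the key regularity step is not correctly justified and would need repair.

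The paper does not pass to the limit at all in this lemma. It works directly with the functions $\tilde g_\eps$: for a ball $B_{\delta}(Y)$ compactly contained in $B_{1/2}\setminus P$, the function $g_\eps-U$ is an even solution of $\mathrm{div}(|z|^\beta\nabla\cdot)=0$ in that ball, so an interior estimate gives $\|g_\eps-U\|_{C^{1,1}(B_{\delta/4}(Y))}\le C\|g_\eps-U\|_{L^\infty}\le C\eps$. Feeding this into the implicit relation $U(X)=g_\eps(X-\eps\tilde g_\eps(X)e_n)$ and using $U_n\neq 0$ away from $P$, the implicit function theorem produces a \emph{uniform} $C^{1,1}$ bound on $\tilde g_\eps$ in $B_{\delta/8}(Y)$; the limit $\tilde g_\infty$ then inherits this bound. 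Your scheme instead identifies the limit $h_k=(g_k-U)/\eps_k\to U_n\tilde g_\infty$ (which the paper proves separately, in Lemma \ref{limitsol}, and only in the viscosity sense), establishes that $U_n\tilde g_\infty$ is a weak solution of the weighted equation, and divides by $U_n$. This gains nothing over the paper's argument — both hinge on the same interior $C^{1,1}$ estimate for even solutions of the weighted extension equation — and it adds the extra burden of making the uniform convergence $h_k\to U_n\tilde g_\infty$ rigorous in the face of the possible multivaluedness of $\tilde g_k$ (you silently treat $\tilde g_k$ as a function; Corollary \ref{corHI} only traps the graph between two H\"older envelopes whose gap is $O((1-\eta)^{m_0})$, not $o(1)$ in $k$ for a fixed decomposition).

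The genuine gap is in the regularity across the plate $\{z=0,\,x_n>0\}$. You claim that the Fabes--Kenig--Serapioni theory ``combined with the boundary Harnack estimate of Theorem \ref{bhi} applied to tangential derivatives'' yields $C^{1,1}$. Neither citation delivers this. Fabes--Kenig--Serapioni gives interior $C^\gamma$ for weak solutions of $A_2$-weighted equations, not $C^{1,1}$. Theorem \ref{bhi} concerns quotients of solutions vanishing on a Lipschitz set $\Omega\times\{0\}$; at an interior point of the plate there is no such boundary and no vanishing, so it simply does not apply, and ``applied to tangential derivatives'' is not a meaningful operation here. The correct argument, which the paper uses implicitly in the displayed estimate, is: an even solution $v$ of $\mathrm{div}(|z|^\beta\nabla v)=0$ in a full ball is smooth in the tangential variables (translation invariance in $x$, so $x$-difference quotients are solutions and H\"older/Harnack bootstraps), and then the non-divergence form $\Delta v+\beta v_z/z=0$ together with $v_z(\cdot,0)=0$ (evenness) controls $v_{zz}$ across $\{z=0\}$, giving the $C^{1,1}$ bound by $\|v\|_{L^\infty}$. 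If you replace the FKS/boundary-Harnack appeal by this argument (or simply invoke the interior $C^{1,1}$ estimate for even solutions of the extension problem as the paper does), your proof closes; as written it does not.
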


\begin{proof}
We fix a point $Y \in B_{1/2} \setminus P$, and let $\delta$ be the distance from $Y$ to $L$. It suffices to show that the functions $\tilde g_\eps$ are uniformly $C^{1,1}$ in $B_{\delta/8}(Y)$. Indeed , since $g_\eps -U$ is an even function that solves the extension problem in $B_{\delta/2}(Y)$ we find 
$$ \|g_\eps -U\|_{C^{1,1}(B_{\delta/4})(Y)} \le C \|g_\eps -U\|_{L^\infty (B_{\delta /2}(Y)) } \le C \eps,$$
and, by implicit function theorem it follows that
$$\|\tilde g_\eps \|_{C^{1,1}(B_{\delta/8})(Y)} \le C.$$
 Here the constants above depend on $Y$ and $\delta$ as well.
 \end{proof}
 
\begin{lem} \label{limitsol}The function $\tilde g_\infty$ satisfies the linearized problem \eqref{linear} in $B_{1/2}$.
\end{lem}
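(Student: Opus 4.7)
The plan is to verify the two ingredients of Definition \ref{linearsol} for $\tilde g_\infty$: the bulk equation $\text{div}(|z|^\beta \nabla(U_n \tilde g_\infty))=0$ on $B_{1/2}\setminus P$ (where $\tilde g_\infty$ is defined and $U_n$ is smooth), and the viscosity free boundary condition on $B_{1/2}\cap L$. I treat the two separately.

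For the bulk equation, I would fix a compact $K \Subset B_{1/2}\setminus P$ and work there for all $k$ large. On $K$, $g_k$ stays uniformly positive because \eqref{flatimp_k} gives $|g_k - U|\le C\eps_k$ while $U$ is bounded below on $K$, so Remark \ref{c11} yields $\text{div}(|z|^\beta \nabla g_k)=0$ on $K$, and the same holds for $U$. From the defining identity $U(X)=g_k(X-\eps_k \tilde g_k(X)e_n)$, together with the uniform bound $|\tilde g_k|\le 1$ and the smoothness of $U$ on $K$, a Taylor expansion in $\eps_k$ gives
\[
\frac{g_k-U}{\eps_k} \longrightarrow U_n\,\tilde g_\infty \quad\text{locally uniformly on } B_{1/2}\setminus P.
\]
Since each quotient is a weak solution of $\text{div}(|z|^\beta \nabla \cdot)=0$, and the class of weak solutions for this $A_2$ weight is closed under local uniform limits (Fabes--Kenig--Serapioni), the desired equation for $U_n \tilde g_\infty$ follows.

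For the free boundary condition, I would argue by contradiction. Suppose a test function $\phi$ satisfying the expansion in Definition \ref{linearsol}(ii) with $b(X_0)>0$ touches $\tilde g_\infty$ strictly from below at some $X_0 \in B_{1/2}\cap L$ (the non-strict case is reduced to the strict one in the standard way by adding $-\eta|X-X_0|^2$). By the Hausdorff graph convergence from Lemma \ref{ginfty}, I can find points $X_k \to X_0$ and constants $c_k \to 0$ such that $\phi+c_k$ touches $\tilde g_k$ from below at $X_k$ in a fixed small ball $B_\sigma(X_0) \setminus P$. Using \eqref{deftilde2} I reconstruct $\phi_{\eps_k}$ from $\phi+c_k$, and by \eqref{gtildeg} this lifts to $\phi_{\eps_k}\le g_k$ on $B_{\sigma-\eps_k}(X_0)$, with touching at the shifted point $Y_k := X_k - \eps_k(\phi+c_k)(X_k)e_n$.

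The crux, and what I expect to be the main obstacle, is to verify that $\phi_{\eps_k}$ is a strict comparison subsolution of \eqref{FB} near $Y_k$. The interior inequality $\text{div}(|z|^\beta \nabla \phi_{\eps_k})\ge 0$ away from $\{z=0\}$ comes from linearizing around $U$ and, if necessary, perturbing $\phi$ by a small quadratic bump to make the inequality strict. For the free boundary condition, combining the expansion of $\phi$ from Definition \ref{linearsol}(ii), the homogeneity $U=r^\alpha$ on the positive $x_n$-axis (equivalently $U_n/U=\alpha/r$), and the defining identity $U(X)=\phi_{\eps_k}(X-\eps_k\phi(X)e_n)$, one solves implicitly for $X_n$ along the normal direction to $F(\phi_{\eps_k})$ and finds that at the relevant boundary point the coefficient in Definition \ref{defsub}(ii) is
\[
a = 1 + \alpha\,\eps_k\,b(X_0) + O(\eps_k^2),
\]
which is strictly greater than $1$ for $k$ large. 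With the strict bulk inequality and the strict free boundary condition in place, Corollary \ref{compmon} contradicts the assumption that $g_k$ is a viscosity solution. The case $b(X_0)<0$ is symmetric, using a strict supersolution touching $\tilde g_\infty$ strictly from above.
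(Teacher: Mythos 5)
Your treatment of the bulk equation is a valid alternative to the paper's. You pass to the linearized limit $\frac{g_k - U}{\eps_k} \to U_n \tilde g_\infty$ locally uniformly on $B_{1/2}\setminus P$ and invoke closedness of weak $A_2$-solutions under locally uniform limits; the paper instead reproves this with a viscosity touching argument at interior points, explicitly computing $\Delta \varphi_k$ and $(\varphi_k)_{n+1}$ from the change of variables $Y = X - \eps_k\tilde\varphi_k(X)e_n$. Your route is cleaner, though to make it rigorous you should note that the locally uniform convergence $\tilde g_k \to \tilde g_\infty$ on compacts of $B_{1/2}\setminus P$ follows from the Hausdorff graph convergence in Lemma \ref{ginfty} (even though $\tilde g_k$ may be multivalued), and that $|g_k - U| \le C\eps_k$ on such compacts makes the difference quotient bounded.

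For the free boundary condition there is a genuine gap. You attempt to upgrade the test function $\phi$ itself to a comparison subsolution $\phi_{\eps_k}$ via \eqref{deftilde2}, but $\phi$ is only required to be continuous with the expansion of Definition \ref{linearsol}(ii) at the single point $X_0 \in L$; it carries no $C^2$ regularity, no bulk PDE, and hence no mechanism by which $\text{div}(|z|^\beta\nabla\phi_{\eps_k}) \ge 0$ could be established in a neighborhood of the would-be touching point $Y_k$. Even if $\phi$ were $C^2$, touching $\tilde g_\infty$ from below at $X_0\in L$ gives no sign information in the bulk (the touching point is on the lower-dimensional set $L$), and at genuine interior touching points of a viscosity solution the inequality one would extract is $\text{div}(|z|^\beta\nabla(U_n\phi)) \le 0$, which is the wrong sign for $\phi_{\eps_k}$ to be a subsolution. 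Likewise the expansion $a = 1 + \alpha\eps_k b(X_0) + O(\eps_k^2)$ cannot be read off uniformly in a neighborhood of $X_0$ from the pointwise expansion of $\phi$. The paper sidesteps all of this: it first replaces $\phi$ by the explicit quadratic $q(X) = \phi(X_0) - \tfrac{\alpha}{2}|x'-y_0'|^2 + 2\alpha(n-1)x_n r$ (which touches $\phi$ from below and separates by $\delta$ on $N_{\bar r}\setminus N_{\bar r/2}$), recognizes $q$ as, up to $O(\eps_k)$, the $\eps_k$-domain variation of a translate $w_k$ of the pre-verified radial subsolution family $v_{R_k}$ from Proposition \ref{sub} with $R_k = 1/(\alpha\eps_k)$, and then closes the argument purely at the level of domain variations with Lemma \ref{linearcomp}, never needing to build a comparison subsolution out of $\phi$. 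The role of the quadratic $q$ and the family $v_R$ is exactly to supply the subsolution structure that a general test function $\phi$ does not have; this is the step your sketch is missing.
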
 
\begin{proof}
We start by showing that $U_n \tilde g_\infty$ satisfies \eqref{div} in $B_{1/2} \setminus \{z=0\}. $ 

Let $\tilde \varphi$ be a $C^2$ function which touches $\tilde g_\infty$ strictly by below at $X_0=(x_0,z_0) \in B_{1/2} \setminus \{z=0\}.$ We need to show that 
\begin{equation}\label{des} \Delta (U_n \tilde \varphi)(X_0) + \beta \frac{(U_n \tilde \varphi)_z(X_0)}{z_0} \leq 0.
\end{equation}
Since by Lemma \ref{ginfty}, the sequence $A_k$ converges uniformly to $A_\infty$ in $B_{1/2} \setminus P$ we conclude that there exist a sequence of constants $c_k \rightarrow 0$ and a sequence of points $X_k \in B_{1/2} \setminus \{z=0\}$, $X_k \rightarrow X_0$ such that $\tilde \varphi_k := \tilde \varphi + c_k$ touches $\tilde g_k$ by below at $X_k $ for all $k$ large enough. 

Define the function $\varphi_k$ by the following identity
\begin{equation}\label{varphi}\varphi_k (X- \eps_k \tilde{\varphi_k}(X) e_n) = U(X). \end{equation}

Then according to \eqref{gtildeg} $\varphi_k$ touches $g_k$ by below at $Y_k = X_k - \eps_k \tilde \varphi_k(X_k)e_n \in B_1 \setminus \{z=0\},$ for $k$ large enough. Thus, since $g_k$ satisfies \eqref{div} in $B_1 \setminus \{z=0\}$ it follows that
\begin{equation}\label{sign}
\Delta \varphi_k(Y_k)  + \beta \frac{(\varphi_k)_{n+1}(Y_k)}{z_k}\leq 0,
\end{equation}
with $z_k$ denoting the $(n+1)$-coordinate of $X_k.$

Let us compute $\Delta \varphi_k (Y_k)$ and $(\varphi_k)_{n+1}(Y_k).$ Since $\tilde \varphi$ is smooth, for any $Y$ in a neighborhood of $Y_k$ we can find a unique $X=X(Y)$ such that
\begin{equation}\label{Y}Y= X- \eps_k \tilde \varphi_k (X) e_n.\end{equation}
Thus \eqref{varphi} reads 
$$\varphi_k (Y) = U(X(Y)),$$with $Y_i=X_i$ if $i\neq n$ and $$\frac{\p X_j}{\p Y_i} = \delta_{ij}, \quad \textrm{when $j\neq n$}.$$ 
Using these identities we can compute that

\begin{equation}\label{Deltavarphi}\Delta \varphi_k (Y) = U_n(X) \Delta X_n(Y) + \sum_{j \neq n} (U_{jj}(X) + 2 U_{jn}(X) \frac{\p X_n}{\p Y_j} )+ U_{nn}(X) |\nabla X_n|^2(Y).\end{equation}

From \eqref{Y} we have that
$$D_X Y = I - \eps_k D_X (\tilde \varphi_k e_n).$$
Thus, since $\tilde \varphi_k = \tilde \varphi +c_k$
$$D_Y X = I + \eps_k  D_X (\tilde \varphi e_n)+ O(\eps_k^2), $$
with a constant depending only on the $C^2$-norm of $\tilde \varphi.$

It follows that 

\begin{equation}\label{pxn} \frac{\p X_n}{\p Y_j} = \delta_{jn} + \eps_k \p_j\tilde \varphi (X) + O(\eps_k^2). \end{equation}

Hence

\begin{equation}\label{nablaxn}|\nabla X_n|^2 (Y)= 1+ 2\eps_k \p_n \tilde \varphi (X)+ O(\eps_k^2), \end{equation}

and also, 

$$\frac{\p^2 X_n}{\p Y_j^2} = \eps_k \sum_{i} \p_{ji} \tilde \varphi \frac{\p X_i}{\p Y_j}+ O(\eps_k^2)= \eps_k \sum_{i \neq n} \p_{ji} \tilde \varphi \delta_{ij} + \eps_k \p_{jn}\tilde \varphi \frac{\p X_n}{\p Y_j} + O(\eps_k^2),$$
from which we obtain that
\begin{equation}\label{deltaxn}\Delta X_n = \eps_k \Delta \tilde \varphi + O(\eps_k^2).\end{equation}

Combining \eqref{Deltavarphi} with \eqref{nablaxn} and \eqref{deltaxn} we get that

$$\Delta \varphi_k (Y) =\Delta U(X) +\eps_k U_n \Delta \tilde \varphi  +2\eps_k \nabla \tilde \varphi \cdot \nabla U_n + O(\eps_k^2)(U_n(X) + U_{nn}(X)).$$

From the computations above it also follows that,

\begin{equation*} (\varphi_k)_{n+1}(Y) = U_n(X) \frac{\p X_n}{\p Y_{n+1}} + U_z(X)  \frac{\p X_{n+1}}{\p Y_{n+1}} = U_n(X)(\eps_k \p_{n+1} \tilde \varphi(X) + O(\eps_k^2)) + U_z(X).
\end{equation*}

Using \eqref{sign} together with the fact that $U$ solves \eqref{div} at $X_k$ we conclude that 

$$0 \geq  \Delta (U_n \tilde \varphi) (X_k) +\beta \frac{(U_n \tilde \varphi)z(X_k)}{z_k}+ O(\eps_k)(U_n(X_k) + \beta \frac{U_n(X_k)}{z_k}+ U_{nn}(X_k)).$$

The desired inequality \eqref{des} follows by letting $k \rightarrow +\infty.$

Next we need to show that $$|\nabla_r \tilde g_\infty |(X_0)= 0, \quad X_0=(x'_0,0,0) \in B_{1/2} \cap L,$$ in the viscosity sense of Definition \ref{linearsol}. 
The proof is the same as in the case $\alpha=1/2$, once the properties of the function $v_R$ defined in Proposition \ref{sub} have been established. For convenience of the reader, we present the details.

Assume by contradiction that there exists a function $\phi$ which  touches $\tilde g_\infty$ by below at $X_0=(x'_0,0,0) \in B_{1/2} \cap L$ and such that $$\phi(X) = \phi(X_0) + a(X_0)\cdot (x' - x'_0) + b(X_0) r + O(|x'-x'_0|^2 + r^{1+\gamma}), $$ for some $\gamma>0$, with $$b(X_0) >0.$$   

Then we can find constants $\alpha,  \delta, \bar r$  and a point $Y'=(y'_0,0,0) \in B_2$ depending on $\phi$ such that the polynomial

$$q(X)=\phi(X_0) - \frac{\alpha}{2}|x'-y'_0|^2 + 2\alpha (n-1)x_n r$$
 touches $\phi$ by below at $X_0$  in a tubular neighborhood  $N_{\bar r}= \{|x'-x'_0|\leq \bar r, r \leq \bar r\}$ of $X_0,$ with 
 
 $$\phi- q \geq \delta>0, \quad \text{on $N_{\bar r} \setminus N_{\bar r/2}$.}$$ This implies that
\begin{equation}\label{second}
\tilde g_\infty - q \geq \delta>0, \quad \text{on $N_{\bar r} \setminus N_{\bar r/2}$,}\end{equation}
and 
\begin{equation}\label{third}
\tilde g_\infty (X_0)- q(X_0) =0.\end{equation}
In particular,\begin{equation}\label{thirdprime}
|\tilde g_\infty (X_k)- q(X_k)| \rightarrow 0, \quad X_k \in N_{\bar r} \setminus P, X_k \rightarrow X_0. \end{equation}

Now, let us choose $R_k=1/(\alpha \eps_k)$ and let us define
$$w_{k}(X) = v_{R_k}(X-Y'+\eps_k \phi(X_0) e_n), \quad Y'=(y'_0,0,0),$$
with $v_R $ the function defined in Proposition \ref{sub}. Then the $\eps_k$-domain variation of $w_k$, which we call $\tilde w_k$,  can be easily computed from the definition
$$w_k(X - \eps_k \tilde w_k(X)e_n)=U(X).$$
Indeed, since $U$ is constant in the $x'$-direction, this identity is equivalent to
$$v_{R_k}(X-Y'+\eps_k \phi(X_0) e_n - \eps_k \tilde w_k(X)e_n) = U(X-Y'),$$
which in view of Proposition \ref{sub} gives us
$$\tilde v_{R_k}(X-Y') = \eps_k(\tilde w_k(X) -\phi(X_0)).$$
From the choice of $R_k$, the formula for $q$ and \eqref{estvr}, we then conclude that
$$\tilde w_k (X) = q(X) + \alpha^2 \eps_k O(|X-Y'|^2),$$ and hence
\begin{equation}\label{first} |\tilde w_k - q| \leq C\eps_k \quad \text{in $N_{\bar r} \setminus P.$}\end{equation}
Thus, from the uniform convergence of $A_k$ to $A_\infty$ and \eqref{second}-\eqref{first} we get that for all $k$ large enough
\begin{equation}\label{fcont}
\tilde g_k - \tilde w_k \geq \frac \delta 2 \quad \text{in $(N_{\bar r} \setminus N_{\bar r/2}) \setminus P.$} 
\end{equation}
Similarly, from the uniform convergence of $A_k$ to $A_\infty$ and \eqref{first}-\eqref{thirdprime} we get that for $k$ large
\begin{equation}\label{scont}
\tilde g_k(X_k) - \tilde w_k(X_k) \leq \frac \delta 4,  \quad \text{ for some sequence $X_k \in N_{\bar r} \setminus P, X_k \rightarrow X_0.$}\end{equation} 

On the other hand, it follows from Lemma \ref{linearcomp} and \eqref{fcont} that
\begin{equation*}
\tilde g_k - \tilde w_k \geq \frac \delta 2 \quad \text{in $N_{\bar r} \setminus P,$} 
\end{equation*} which contradicts \eqref{scont}.
\end{proof}

The main Theorem now follows combining all of the lemmas above with the regularity result for the linearized problem, as in the case $\alpha=1/2$. For completeness we present the details.
 
 \
 
 \textit{Proof of Theorem \ref{iflat}.} 
Let $\rho$ be the universal constant from Lemma  \ref{LIF} and assume by contradiction that we
can find a sequence $\eps_k \rightarrow 0$ and a sequence $g_k$ of
solutions to \eqref{FB} in $B_1$ such that $g_k$ satisfies \eqref{flatimp},
i.e.
 \begin{equation}\label{flatimp_k2}U(X - \eps_k e_n) \leq g_k(X) \leq U(X+\eps_k e_n) \quad \textrm{in $B_1$,}\end{equation}
 but it does not satisfy the conclusion of the Theorem.
 
Denote by  $\tilde g_k$ the  $\eps_k$-domain variation of $g_k$.
Then by Lemma \ref{ginfty} the sequence of sets
$$A_k := \{(X, \tilde g_k (X)) : X \in B_{1-\eps_k} \setminus P\},$$ 
converges uniformly (up to extracting a subsequence) in $B_{1/2} \setminus P$ to the graph $$A_\infty := \{(X,\tilde g_\infty(X)) : X \in B_{1/2} \setminus P\},$$ where $\tilde g_\infty$ is  a Holder continuous function in $B_{1/2}$. By Lemma \ref{limitsol}, the function $\tilde g_\infty$ solves the linearized problem \eqref{linear} and hence by Corollary \ref{LIF} $\tilde g_\infty$ satisfies 
\begin{equation}\label{bound2}  a_0 \cdot x' - \frac{1}{8} \rho\leq  \tilde g_\infty(X) \leq a_0 \cdot x' + \frac{1}{8} \rho \quad \text{in $B_{2\rho},$}\end{equation} 
 with $a_0 \in \R^{n-1}$.

From the uniform convergence of $A_k$ to $A_\infty$, we get that for all $k$ large enough
\begin{equation}\label{boundnew}  a_0 \cdot x' - \frac{1}{4} \rho\leq  \tilde g_k(X) \leq a_0 \cdot x' + \frac{1}{4} \rho \quad \text{in $B_{2\rho} \setminus P,$}\end{equation}  and hence from Lemma \ref{seclem},  the $g_k$ satisfy the conclusion of our Theorem (for $k$ large). We have thus reached a contradiction.
\qed

\section{The regularity of the linearized problem.}

The purpose of this section is to prove an improvement of flatness result for viscosity solutions to the linearized problem associated to \eqref{FB}, that is
\begin{equation}\label{linear2}\begin{cases} \text{div} (|z|^\beta \nabla(U_n w)) = 0, \quad \text{in $B_1 \setminus P,$}\\ |\nabla_r w|=0, \quad \text{on $B_1\cap L$,}\end{cases}\end{equation}
where we recall that
for  $X_0=(x'_0,0,0) \in B_1 \cap L,$ we set
$$|\nabla_r w| (X_0) := \di\lim_{(x_n,z)\rightarrow (0,0)} \frac{w(x'_0,x_n, z) - w (x'_0,0,0)}{r}, \quad   r^2=x_n^2+z^2 .$$

The following is our main theorem.

\begin{thm}\label{class} Given a boundary data $\bar h \in C(\p B_1), |\bar h| \leq 1,$ which is even with respect to $\{z=0\}$, there exists a unique classical solution $h$ to \eqref{linear2} such that $h \in C(\overline{B}_1)$, $h = \bar h$ on $\p B_1$, $h$ is even with respect to $\{z=0\}$ and it satisfies
\begin{equation}\label{mainh}|h(X) - h(X_0) -  a' \cdot (x'- x'_0)| \leq C (|x'-x'_0|^2 + r^{1+\gamma}), \quad X_0 \in B_{1/2} \cap L,\end{equation} for  universal constants $C, \gamma$ and a vector $a' \in \R^{n-1}$ depending on $X_0.$
\end{thm}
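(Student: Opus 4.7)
The plan is to mirror the argument used for $\alpha=1/2$ in \cite{DR}, with each step upgraded using the $A_2$-weighted theory of \cite{F1,F2,F3} in place of the unweighted harmonic theory.

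First, existence and uniqueness are handled variationally. The natural energy is
\[
\mathcal{E}(h) = \int_{B_1} |z|^\beta U_n^2 |\nabla h|^2\, dX,
\]
minimized over functions with trace $\bar h$ on $\p B_1$ and even across $\{z=0\}$. Using the identity $\mathrm{div}(|z|^\beta \nabla U_n)=0$, which follows by differentiating the equation for $U$ in the $x_n$ variable, one checks that the Euler--Lagrange equation is precisely $\mathrm{div}(|z|^\beta \nabla(U_n h))=0$, and that the natural variational free boundary condition at the edge $L$, where $U_n$ behaves like $r^{\alpha-1}$, is exactly $|\nabla_r h|=0$. The combined weight $|z|^\beta U_n^2$ can be checked in polar coordinates of the $(x_n,z)$--plane to satisfy the Muckenhoupt $A_2$ condition, so existence in the associated weighted Sobolev space is standard. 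Uniqueness is immediate from linearity together with the maximum principle applied to $v:=U_n h$, which solves $\mathrm{div}(|z|^\beta \nabla v)=0$.

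Away from $L\cup P$ the equation is uniformly elliptic with smooth coefficients. Across $P\setminus L$ the factor $U_n$ is bounded between two positive constants and we recover the plain $|z|^\beta$--weighted equation, whose solutions are Hölder continuous by \cite{F1,F2,F3}. The only substantive question is therefore the expansion at points of $L$.

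The expansion \eqref{mainh} at $X_0 \in B_{1/2}\cap L$ will be proved by an improvement--of--flatness iteration combined with a compactness/Liouville argument. The key observation is that every affine function $c + a'\cdot x'$ is an exact solution of \eqref{linear2}: it is independent of $r$, so $|\nabla_r|=0$ trivially, and the interior equation is satisfied because $U_n$ does not depend on $x'$. The goal is a dyadic estimate
\[
\sup_{B_{\rho^k}(X_0)}\bigl|h - h(X_0) - a_k'\cdot(x'-x'_0)\bigr| \le C\rho^{k(1+\gamma)}, \qquad |a_k' - a_{k-1}'|\le C\rho^{k\gamma},
\]
whose inductive step is proved by contradiction: after rescaling, a failing sequence converges locally uniformly, via uniform $C^\alpha$ estimates up to $L$ obtained from weighted boundary Hölder theory and barriers built from $r^{1+\gamma}$, to a bounded solution $h_\infty$ in $B_1$ with $h_\infty(0)=0$ that is not affine--approximable with the required rate. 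This contradicts a Liouville--type rigidity asserting that any such $h_\infty$ is of the form $a'\cdot x'$. The Liouville step is the chief obstacle: I would attack it by noting that for $j<n$ the tangential derivative $\p_j h_\infty$ is again a solution of \eqref{linear2}, and then applying a boundary Harnack principle at $L$ together with a dimension reduction in the tangential variables, ultimately reducing to a standard Liouville statement for bounded solutions of the two--dimensional weighted equation in $(x_n,z)$ with $|\nabla_r|=0$ at the origin. Once the dyadic iteration is in place, the Hölder dependence of $a'$ on $X_0$ is a routine Campanato--type consequence.
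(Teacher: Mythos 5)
Your variational setup and the verification that affine functions are exact solutions match the paper's Section 8, and both you and the paper recognize translation invariance in $x'$ as the structural lever. But from there your route diverges substantially, and the core step has a genuine gap.

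The paper does not use an improvement-of-flatness iteration for the linearized problem. Instead, it exploits translation invariance directly: discrete tangential differences of a minimizer are again minimizers, giving $D_{x'}^k h \in C^\gamma(B_{1/2})$ with universal bounds (Lemma \ref{derivativeH}). Freezing $x'$ then reduces the equation to a \emph{two-dimensional} problem in $(x_n,z)$, $\mathrm{div}(|z|^\beta\nabla(U_t h)) = |z|^\beta U_t f$ with $f=\Delta_{x'}h\in L^\infty$. Since $U$ solves the homogeneous 2D equation, the weighted boundary Harnack inequality (extended to tolerate this right-hand side, Remark \ref{fin}) yields $H/U\in C^\gamma$ near the edge, which is exactly the expansion \eqref{mainh} in the $r$--variable; the $x'$--variable was already controlled. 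No compactness argument is required.

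Your proposed compactness/Liouville iteration has a circularity that you cannot escape as formulated. After rescaling, your contradiction sequence converges to a bounded solution $h_\infty$ of the \emph{same} equation \eqref{linear2} in $B_1$ (the equation is scale-invariant here), so the limit is no more regular than what you started with. The rigidity you invoke — that a bounded solution in $B_1$ vanishing at the origin must equal $a'\cdot x'$ — is false; even the straight interior equation $\mathrm{div}(|z|^\beta U_n^2\nabla h)=0$ has plenty of non-affine bounded solutions in $B_1$ (take, e.g., $h$ proportional to $|x'|^2/(n-1)-2x_n r$, which the paper itself uses as a minimizer). A genuine Liouville theorem would need $h_\infty$ to be entire with sublinear growth, but your blow-up keeps the domain fixed at $B_1$. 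To make the contradiction run you would essentially have to prove \eqref{mainh} for $h_\infty$ by independent means — at which point you have reproduced the paper's argument. The dimension-reduction comment (tangential derivatives are solutions) is the right instinct but is not carried to its conclusion; carried out fully it is precisely the paper's step from Lemma \ref{derivativeH} to the 2D reduction \eqref{2dreduction}.

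A secondary point: your claim that $|z|^\beta U_n^2$ is an $A_2$ weight is plausible near $L$ (the exponents conspire so that $\int w\cdot\int w^{-1}\sim |B|^2$ on balls centered on $L$), but you do not check it for all balls, and in any case the paper avoids relying on $A_2$ theory for this composite weight; it only needs $A_2$ for $|z|^\beta$ and then works with the equation for $U_n h$. Your compactness step also requires uniform Hölder estimates for $h$ up to $L$ before you can extract a limit, and the barrier you gesture at ($r^{1+\gamma}$) is not a supersolution of the edge boundary condition $|\nabla_r h|=0$ without further construction.
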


As a corollary of the theorem above we obtain the desired regularity result, as stated also in Section 3.

\begin{thm}[Improvement of flatness]\label{lineimpflat} There exists a universal constant $C$ such that if $w$ is a viscosity solution to \eqref{linear2} in $B_1$ with $$-1 \leq w(X) \leq 1\quad \text{in $B_1,$}$$ then  \begin{equation}\label{boundlin2}  a_0 \cdot x' -C|X|^{1+\gamma} \leq w(X) - w(0)\leq a_0 \cdot x' + C |X|^{1+\gamma},\end{equation}for some vector $a_0\in \R^{n-1}$.\end{thm}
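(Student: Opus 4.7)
The plan is to reduce the viscosity statement to the classical expansion \eqref{mainh} by showing that any viscosity solution $w$ coincides with the unique classical solution produced by Theorem \ref{class} having the same boundary values. I would work in a slightly smaller ball, say $B_{3/4}$, so that the restriction $w|_{\partial B_{3/4}}$ is a continuous, even, uniformly bounded boundary datum. After the harmless rescaling $X\mapsto (4/3)X$ normalizing the domain to $B_1$, Theorem \ref{class} produces a classical solution $h$ of \eqref{linear2} matching this boundary datum and enjoying the pointwise expansion \eqref{mainh} at every point of $B_{1/2}\cap L$; unrescaling, $h$ lives in $B_{3/4}$ with the expansion valid at $0\in B_{3/8}\cap L$.

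Next I would invoke a viscosity comparison principle to force $w\equiv h$ in $B_{3/4}$. Away from the singular set $L$, the equation $\text{div}(|z|^\beta\nabla(U_n w))=0$ is a standard linear divergence-form equation with positive weight $|z|^\beta U_n$ (uniformly so on compact subsets of $B_{3/4}\setminus P$), so interior touching comparison reduces to the classical maximum principle. At a putative touching point on $L$, condition (ii) of Definition \ref{linearsol} is tailor-made to exclude one-sided contact: replacing $h$ by a perturbation such as $h\pm\delta(1-|X|^2)$ promotes the contact to be strict and makes the coefficient $b$ of the $r$-term in the expansion \eqref{mainh} of $h$ strictly of the required sign, violating the viscosity condition. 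Hence $w=h$ in $B_{3/4}$.

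Finally, applying \eqref{mainh} to $h=w$ at $X_0=0$ produces a vector $a_0\in\R^{n-1}$ with
$$|w(X)-w(0)-a_0\cdot x'|\le C(|x'|^2+r^{1+\gamma})\qquad\text{for }X\in B_{3/8}.$$
Using $|x'|^2\le|X|^2\le|X|^{1+\gamma}$ and $r\le|X|$ when $|X|\le 1$, the right-hand side is bounded by $2C|X|^{1+\gamma}$, which is exactly \eqref{boundlin2} in $B_{3/8}$. On the annulus $B_1\setminus B_{3/8}$ the inequality holds after enlarging $C$, since $|w-w(0)|\le 2$ and $|a_0|$ is controlled by a universal constant (obtained by evaluating the expansion at a point with $|x'|\sim 1$ and using $|w|\le 1$). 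The main obstacle will be the rigorous verification of the viscosity comparison across the singular locus $L$: one must check that the $O(|x'-x'_0|^2+r^{1+\gamma})$ regularity of $h$ from Theorem \ref{class} suffices for $h$ (after the additive perturbation) to serve as an admissible test function in Definition \ref{linearsol}(ii), and that the requisite strict sign of $b$ can indeed be arranged. Once this is settled, the remainder is essentially transcription.
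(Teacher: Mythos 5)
Your overall strategy is the one the paper intends: Theorem~\ref{lineimpflat} is stated in the paper as an immediate corollary of Theorem~\ref{class}, and identifying the viscosity solution $w$ with the classical solution $h$ of matching boundary data, then transporting the expansion \eqref{mainh}, is precisely the route left implicit. Your final reduction from \eqref{mainh} to \eqref{boundlin2} (splitting $B_{3/8}$ from the annulus, and bounding $|a_0|$) is fine.

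However, the specific perturbation you propose for the comparison across $L$ does not work. Since $1-|X|^2 = 1 - |x'|^2 - r^2$, the function $h\pm\delta(1-|X|^2)$ still has \emph{zero} coefficient $b$ of the linear $r$-term in its expansion at any $X_0 \in L$: you have modified only the $|x'-x_0'|^2$ and $r^2$ pieces, which already sit inside the error $O(|x'-x_0'|^2+r^{1+\gamma})$ of \eqref{mainh}. Definition~\ref{linearsol}(ii) has content only when $b(X_0)\neq 0$, so this test function never triggers the viscosity condition and no contradiction results. The perturbation must contain a term \emph{linear} in $r$: e.g., to rule out $\sup(w-h)=m>0$, maximize $w-h+\delta r$ and note that $\phi := h + m_\delta - \delta r$ (with $m_\delta=\sup(w-h+\delta r)$) touches $w$ from above with $b=-\delta<0$, the forbidden configuration. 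This choice is also PDE-compatible because $rU_n = \alpha U$ is a solution of the homogeneous equation, so $U_n\phi$ solves $\mathrm{div}(|z|^\beta\nabla(U_n\phi))=0$ in $B_1\setminus P$; the strong maximum principle for the $A_2$-degenerate operator then excludes an interior touching point off $P$, and boundary Harnack applied to the nonnegative solution $U_n(\phi-w)$ (vanishing on $P$) excludes touching on $P\setminus L$, so the touching point is forced onto $L$ where Definition~\ref{linearsol}(ii) applies. With that replacement the argument closes; as written, the step you yourself flag as the main obstacle fails.
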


The existence of the classical solution of Theorem \ref{class} will be achieved via a variational approach in the appropriate weighted Sobolev space. The advantage of working in the variational setting is that the difference of two solutions remains a solution. This is not obvious if we work directly with viscosity solutions.

We say that $h \in H^1(U_n^2 dX, B_1)$ is a minimizer to the energy functional 
$$J(h) := \int_{B_1} |z|^\beta U_n^2 |\nabla h|^2 dX,$$
if
$$J(h) \leq J(h+\phi), \quad \forall \phi \in C_0^\infty(B_1).$$
Since $J$ is strictly convex this is equivalent to
$$\di\lim_{\eps \rightarrow 0} \frac{J(h)-J(h+\eps \phi)}{\eps} =0,  \quad \forall \phi \in C_0^\infty(B_1),$$ which is satisfied if and only if
$$\int_{B_1} |z|^\beta U_n^2 \nabla h \cdot \nabla \phi \;dX = 0, \quad  \forall \phi \in C_0^\infty(B_1). $$

\

Below, we briefly describe the relation between minimizers and viscosity solutions. First, a minimizer $h$ solves the equation
$$\textrm{div}(|z|^\beta U_n^2 \nabla h) = 0 \quad \text{in $B_1,$}$$ which in $B_1 \setminus P$ is equivalent to solving  \begin{equation}\label{div_eq}\text{div}(|z|^\beta \nabla(U_n h)) = 0 \quad \text{in $B_1 \setminus P.$}\end{equation}

Indeed, if $\phi \in C_0^\infty(B_1 \setminus P)$ then 
$$\int |z|^\beta U_n^2 \nabla h \nabla (\frac{\phi}{U_n}) dX=0.$$
This implies,
$$\int |z|^\beta(U_n \nabla h \nabla \phi - \nabla h \phi \nabla U_n) dX=0.$$
Hence,
$$\int |z|^\beta (\nabla(U_n h) \nabla\phi - \nabla U_n \nabla(h\phi))=0.$$
The second integral is zero, since $U_n$ is a solution of the equation $\text{div}(|z|^\beta \nabla U_n) =0$. Thus, our conclusion follows.

Moreover, we claim that if  $h \in C(B_1)$ is a solution to \eqref{div_eq}, such that 
\begin{equation}\label{claim}
\di\lim_{r \rightarrow 0} h_r(x',x_n,z) = b(x'),
\end{equation}
with $b(x')$  a continuous function, then $h$ is a minimizer to $J$ in $B_1$ if and only if $b \equiv 0.$

\textit{Proof of the claim.}
By integration by parts and the computation above the identity 
$$\int_{B_1} |z|^\beta U_n^2 \nabla h \cdot \nabla \phi \;dX = 0, \quad  \forall \phi \in C_0^\infty(B_1),$$ is equivalent to the following two conditions
\begin{equation}\label{unvharm}\text{div}(|z|^\beta \nabla (U_n h)) = 0 \quad \text{in $B_1 \setminus P,$}\end{equation}
and
\begin{equation}\label{fbforv}\di\lim_{\delta \rightarrow 0} \int_{\p C_\delta \cap B_1} |z|^\beta U_n^2 \phi \nabla h \cdot \nu d\sigma =0,\end{equation}
where $C_\delta$ is the cylinder $ \{r \leq \delta\}$
and $\nu$ the inward unit normal to $C_\delta.$ 

Here we use that 
$$\lim_{\eps \rightarrow 0}  \int_{\{|z|=\eps\} \cap (B_1 \setminus C_\delta)} |z|^\beta U_n^2 \phi h_{\nu}  d\sigma =0.
$$

Indeed, in the set $\{|z|=\eps\} \cap (B_1 \setminus C_\delta)$ we have, ( for some $C$ independent of $\eps$)
$$U_n \leq C |z|^{1-\beta}, $$
and
$$|\nabla (U_nh)|, |\nabla U_n| \leq C |z|^\beta,$$
from which it follows that
$$|\nabla h| \leq C |z|^{-1}.$$

In conclusion we need to show that \eqref{fbforv} is equivalent to $b(x')=0.$

This follows, after an easy computation showing that 

$$\lim_{\delta \rightarrow 0}\int_{\p C_\delta \cap B_1} |z|^\beta U_n^2 \phi \nabla h \cdot \nu d\sigma = C_{\alpha} \int_L b(x') \phi(x',0,0) d x' $$ 
with 
$$C_{\alpha}= \alpha^2 \int_{-\pi}^{\pi} (\cos \theta)^\beta (\cos \frac{\theta}{2})^{2-2\beta} d\theta.$$

\qed

From the claim it follows that the function 
$$v(X) := -\frac{|x'|^2}{n-1} + 2x_n r,$$
is a minimizer of $J$. Using as comparison functions the translations of the function $v$ above we obtain as in  Lemma \ref{babyH} that minimizers $h$ satisfy Harnack inequality. 

Since our linear problem is invariant under translations in the $x'$-direction, we see that discrete differences  of the form 
$$h(X + \tau) - h(X),$$with $\tau$ in the $x'$-direction are also minimizers. Now by standard arguments we obtain the following regularity result. 

\begin{lem}\label{derivativeH} Let $h$ be a minimizer to $J$ in $B_1$ which is even with respect to $\{z =0\}$. Then $D_{x'}^k h \in C^\gamma(B_{1/2})$ and $$[D_{x'}^k h]_{C^{\gamma}(B_{1/2})} \leq C \|h\|_{L^\infty(B_1)},$$ with $C$ depending on the index $k=(k_1,..,k_{n-1}).$

\end{lem}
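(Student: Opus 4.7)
The plan is to iterate the $C^\gamma$ Hölder regularity estimate for bounded minimizers, exploiting the translation invariance of $J$ in the $x'$-directions. As a first step, the Harnack inequality for nonnegative minimizers mentioned in the paragraph preceding the statement (obtained by using translates of the explicit minimizer $v(X)=-|x'|^2/(n-1)+2x_n r$ as barriers, in the same spirit as Lemma~\ref{babyH}) yields by the standard oscillation-decay argument a universal exponent $\gamma\in(0,1)$ and a bound
$$[h]_{C^\gamma(B_{3/4})}\le C\|h\|_{L^\infty(B_1)}$$
valid for every minimizer $h$ of $J$ in $B_1$.

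Next I would exploit translation invariance. Because $U_n(X)$ depends only on $(x_n,z)$, the weight $|z|^\beta U_n^2$ is invariant under translations in any $x'$-direction $e_i$, $1\le i\le n-1$, so for small $|t|$ the translate $X\mapsto h(X+te_i)$ is a minimizer of $J$ on $B_{1-|t|}$ and, by linearity of the Euler--Lagrange equation, so is the difference quotient
$$D_t^i h(X):=\frac{h(X+te_i)-h(X)}{t}.$$
A Caccioppoli inequality for $\mathrm{div}(|z|^\beta U_n^2\nabla h)=0$ (obtained by testing with $\phi=\eta^2 h$) gives
$$\int_{B_{7/8}} |z|^\beta U_n^2|\nabla h|^2\,dX \le C\|h\|_{L^\infty(B_1)}^2,$$
so the Nirenberg difference-quotient method shows that $\{D_t^i h\}_t$ is bounded in $L^2(|z|^\beta U_n^2\,dX,B_{3/4})$ uniformly in $t$. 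Since each $D_t^i h$ is itself a minimizer, a Moser-type $L^\infty$--$L^2$ estimate yields a uniform $L^\infty$ bound; passing to $t\to 0$ shows that $\partial_{x_i}h$ exists in $L^\infty(B_{3/4})$, is again a minimizer, and satisfies $\|\partial_{x_i}h\|_{L^\infty(B_{3/4})}\le C\|h\|_{L^\infty(B_1)}$. Applying the $C^\gamma$ estimate of the first step to $\partial_{x_i}h$ then gives $[\partial_{x_i}h]_{C^\gamma(B_{1/2})}\le C\|h\|_{L^\infty(B_1)}$, and iterating on successive tangential derivatives (with successively shrinking balls) produces the bound for $D_{x'}^k h$ with a constant depending on $|k|$.

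The main obstacle is verifying that the weighted De~Giorgi--Nash--Moser theory (Harnack, oscillation decay, Caccioppoli, Moser's $L^\infty$--$L^2$ estimate) applies to the equation $\mathrm{div}(|z|^\beta U_n^2\nabla\cdot)=0$. The weight $|z|^\beta U_n^2$ is not globally a Muckenhoupt $A_2$ weight: near $L$ the factor $U_n$ blows up like $r^{\alpha-1}$, and along $P$ the factor $U_n$ vanishes. However, on any set compactly contained in $B_1\setminus(P\cup L)$ the factor $U_n^2$ is comparable to a strictly positive bounded function, so $|z|^\beta U_n^2$ is $A_2$ with a comparable constant and the theory of Fabes--Kenig--Serapioni recalled in Section~2 applies. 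Near $L$ inside $B_{1/2}$, the relevant Harnack/oscillation estimates are produced by comparing with translates of the barrier $v(X)=-|x'|^2/(n-1)+2x_n r$ (which is itself a minimizer, as observed in the paragraph preceding the lemma, because its radial derivative $v_r=2x_n$ vanishes on $L$), exactly as in the proof of Lemma~\ref{babyH}. Once this technical point is in place the iteration outlined above is routine and delivers the claimed estimate.
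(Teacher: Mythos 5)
Your proposal follows essentially the same route as the paper's (one-paragraph) argument: Harnack for minimizers via comparison with translates of $v$ as in Lemma~\ref{babyH}, translation invariance of $J$ in the $x'$-directions so that tangential differences of $h$ are again minimizers, and a bootstrap; you have simply filled in the paper's phrase ``by standard arguments'' with the Caccioppoli/Moser and difference-quotient machinery. The one spot you gloss over a little is the region near $P$ but away from $L$, where $U_n$ degenerates to zero (so the weight is not locally $A_2$-comparable to $|z|^\beta$ there); this is handled by passing to $U_n h$, which solves the plain $|z|^\beta$-weighted equation and vanishes on $P$, and applying the boundary Harnack principle with $U_n$ as the comparison positive solution -- the same device the paper uses again in the proof of Theorem~\ref{class}.
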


We are now ready to prove our main theorem. 

\

\noindent \textit{Proof of Theorem \ref{class}.}  
It suffices to show that minimizers $h$ with smooth boundary data on $\p B_1$ achieve the boundary data continuously and satisfy the conclusion of our theorem. Then the general case follows by approximation. 

First we show that $h$ achieves the boundary data continuously. At points on $\p B_1 \setminus P$ this follows from the continuity of $U_n h$, since $U_n \ne 0$. 

For points $X_0 \in \p B_1 \cap P$ we need to construct local barriers for $h$ which vanish at $X_0$ and are positive in $\bar B_1$ near $X_0$.  If $X_0 \notin L$ then we consider barriers of the form $$z^{1-\beta} W(x)/ U_n$$ with $W$ harmonic in $x$. If $X_0 \in L$ then the barrier is given by  $$(x'-x_0') \cdot x_0'.$$ 

By Lemma \ref{derivativeH} and \eqref{claim}, it remains to prove that 
\begin{equation}\label{exp2} |h(x',x_n,x) - h(x', 0,0) - b(x') r| \leq C r^{1+\gamma}, \quad (x',0,0) \in B_{1/2} \cap L, \end{equation} \begin{equation}\label{hb}|h_r(x',x_n,z) -b(x') | \leq Cr^{\gamma}, \quad (x',0,0) \in B_{1/2} \cap L,\end{equation}with $C$, $\gamma$ universal and $b(x')$ a continuous function.

Indeed, $h$ solves $$\text div(|z|^\beta \nabla(U_n h)) =0 \quad \text{in $B_1 \setminus P$}. $$ Since $U_n$ is independent on $x'$ we can rewrite this equation as
\begin{equation}\label{2dreduction}\text div_{x_n,z}(|z|^\beta \nabla (U_n  h)) = - |z|^\beta U_n \Delta_{x'} h ,\end{equation}
and according to Lemma \ref{derivativeH} we have that 
$$\Delta_{x'} h \in L^\infty (B_{1/2}).$$  
Thus, for each fixed $x'$, we need to investigate the 2-dimensional problem (in the $(t,z)$-variables)
$$\text div(|z|^\beta \nabla (U_t h)) = |z|^\beta U_t f, \quad \text{in $B_{1/2} \setminus \{t \leq 0, z=0\}$}$$
with $f$ bounded. 

After fixing $x'$, say $x'=0$, we may subtract a constant and assume $h(0,0,0)=0.$ Then $U_t h$ is continuous at the origin and coincides with the  solution $H(t,z)$ to the problem
\begin{equation}\label{eqf}
\text div(|z|^\beta \nabla H) = |z|^\beta U_t f, \quad \text{in $B_{1/2} \setminus \{t \leq 0, z=0\},$}
\end{equation}
such that 
$$H = U_t  h  \quad \text{on $\p B_{1/2}$}, \quad H=0 \quad \text{on $B_{1/2} \cap \{t \leq 0, z=0\}.$}$$

The fact that $U_t h=H$ follows from standard arguments by comparing $H-U_th$ with $\pm \eps U_t$ and then letting $\eps \to 0$.

Using that $U$ is a positive solution to the homogenous equation \eqref{eqf} we may apply boundary Harnack estimate (see Remark \ref{fin}) and obtain that $H/U$ is a $C^\gamma$ function in a neighborhood of the origin. Thus 
$$|H - aU| \leq C_0 r^{\gamma}U, \quad r^2=t^2+z^2, \quad \mbox{$C_0$ universal},$$
for some $a\in \R$. Since $U/U_t= r / \alpha $ we obtain \eqref{exp2} with $b=  a / \alpha$.
 
We show that \eqref{hb} follows from \eqref{exp2} and the derivative estimates for the extension equation. Indeed, the function $\bar H:=H-aU$ above satisfies 
$$|\text div(|z|^\beta \nabla \bar H)| \le C r^{-\alpha}, \quad \quad \|\bar H\|_{L^\infty (B_{2r} \setminus B_r)} \le C r^{\gamma}U,  $$
and the derivative estimates for the rescaled function $\bar H(r(t,z))$ imply
$$|\bar H_r| \le C r^{\gamma -1} U=Cr^\gamma U_t.$$
Using that $$U_t h_r = H_r + (1-\alpha)\frac{H}{r},$$ we easily obtain \eqref{hb}.  

Finally we remark that $b(x')$ is a smooth function since by the translation invariance of our equaltion in the $x'$ direction, the derivatives of $b$ are the corresponding functions in \eqref{exp2} for the derivatives $\partial_{x_i}h, i=1,\ldots, n-1$.

\qed

\begin{rem}\label{fin}
In general boundary Harnack estimate is stated for the quotient $ v / u$ of two solutions (and $u$ positive) to a homogenous equation $L u=0$. 
The result remains valid if $v$ solves the equation $Lv=g$ for a right hand side $g$ that is not too degenerate near the boundary. 
In fact we only need to find an explicit barrier $w$ such that $Lw \ge |g|$ and  $w/u$ is Holder continuous at $0$. 
Then the strategy of trapping $v$ in dyadic balls between multiples $a_k u$ and $b_k u$ can be carried out by trapping $v$ between functions of the type $a_k u +w$ and $b_k u -w$. 

In the case of equation \eqref{eqf} an explicit $w$ is given by $w:=rU$ and it is easy to check that
$$\text div(|z|^\beta \nabla w) \ge c_0 |z|^ \beta U / r,$$
for some positive constant $c_0$.  
\end{rem}

\end{document}